\numberwithin{equation}{section}  
\newtheorem{theorem}{Theorem}[section] 
\newtheorem{lemma}[theorem]{Lemma} 
\newtheorem{proposition}[theorem] {Proposition} 
\newtheorem{cor}[theorem]{Corollary} 
\newtheorem{remark}[theorem]{Remark} 
\newtheorem{definition}[theorem] {Definition} 
\newtheorem{assump}[theorem]{Assumption}
\theoremstyle{definition}
\DeclareMathAlphabet{\mathpzc}{OT1}{pzc}{m}{it}
\newcommand{\abs}[1]{\left| #1 \right|}
\renewcommand{\L} {\Lambda} %
\def\d{\delta} 
\newcommand{\e} {\varepsilon} 
\newcommand{\eps}{\varepsilon} 
\renewcommand{\epsilon}{\varepsilon}
\def\l{\lambda}
\newcommand{\tY}{\tilde{Y}}
\font\tenBbb=msbm10 
\font\sevenBbb=msbm7 
\font\fiveBbb=msbm5 
\newcommand{\R}     {\mathbb{R}} 
\newcommand{\N}     {\mathbb{N}} 
\renewcommand{\P}   {\mathbb{P}} 
\newcommand{\E}     {\mathbb{E}} 
\newcommand{\Q}     {\mathbb{Q}} 
 \newcommand{\floor}[1]{\left\lfloor {#1} \right\rfloor}
\def\1{{\mathchoice {1\mskip-4mu\mathrm l}      
{1\mskip-4mu\mathrm l} 
{1\mskip-4.5mu\mathrm l} {1\mskip-5mu\mathrm l}}} 
\newcommand{\ssup}[1] {{\scriptscriptstyle{({#1}})}} 
\def\comment#1{} 
\newtheoremstyle{thm}{2ex}{2ex}{\itshape\rmfamily}{} 
{\bfseries\rmfamily}{}{1.7ex}{} 
\newtheoremstyle{rem}{1.3ex}{1.3ex}{\rmfamily}{} 
{\itshape\rmfamily}{}{1.5ex}{} 
\newcommand{\bE} {\mathrm{\textbf{E}}}
\newcommand{\bP} {\mathrm{\textbf{P}}} 
\newcommand{\bT} {\mathrm{\textbf{T}}}
\newcommand{\bW} {\mathrm{\textbf{W}}}
\newcommand{\bZ} {\boldsymbol{Z}} 
\newcommand{\ba}{\boldsymbol{p}}
\newcommand{\zero} {\boldsymbol{0}}
\newcommand{\Ccal}   {{\mathcal C }}
\newcommand{\Fcal}   {{\mathcal F }} 
\newcommand{\Gcal}   {{\mathcal G }} 
\newcommand{\Hcal}   {{\mathcal H }}
\newcommand{\Ocal}   { {O}}
\newcommand{\ex}{{\rm e}} 
\renewcommand{\d}{{\rm d}}
\newcommand{\Exp}{\mathscr{E}\kern-0.2mm{\operatorname{xp}}}
\newcommand{\Log}{\mathscr{L}\kern-0.2mm{\operatorname{og}}}
\newcommand\NoBlackBoxes{\global\overfullrule0pt}
\newcommand\mycom[2]{\genfrac{}{}{0pt}{}{#1}{#2}}
\newcommand{\ek}[1]{\left[#1\right]}
\newcommand{\rk}[1]{\left(#1\right)}
\newcommand{\gk}[1]{\left\{#1\right\}}
\newcommand{\hk}[1]{^{(#1)}}
\newcommand{\Pt}{\P_{\mathbf{T}}}
\newcommand{\Et}{\E_{\mathbf{T}}}
\renewcommand{\root}{\zero}
\newcommand{\optarg}[1][]{%
  \ifthenelse{\isempty{#1}}%
    {{\tt{P}}_\L}
    {{\tt{P}}_\L^{\ssup{{\rm #1}}}}
}
\newcommand{\CC}{C_{\alpha}}
\newcommand{\phit}[2]{\phi_\bT\hk{#1}(#2)}
\newcommand{\nk}{{n_k}}
\newcommand{\Ynk}{Y_{n_k}}
\newcommand{\lrd}{\xleftrightarrow{\,*\,}}
\newcommand{\OpenEd}{y}
\newcommand{\cadlag}{c\`adl\`ag }
\newcommand{\Ypron}[1]{Y_{#1}\hk{n}}
\newcommand{\Var}[1]{\mathbf{Var}\left({#1}\right)} 
\author{Eleanor Archer}
\address{Modal'X, UMR CNRS 9023, UPL, Universit\'e Paris Nanterre, 92000 Nanterre, France}
\email{earcher@parisnanterre.fr}
\author{Quirin Vogel}
\address{Department of Mathematics, CIT, Technische Universität München, Boltzmannstr. 3, D-85748, Garching bei München, Germany.}
\email{quirinvogel@outlook.com}
\begin{document}
\title[Quenched critical percolation on Galton--Watson trees]{Quenched critical percolation on Galton--Watson trees}
\maketitle



\begin{abstract}
    We consider critical percolation on a supercritical Galton--Watson tree. We show that, when the offspring distribution is in the domain of attraction of an $\alpha$-stable law for some $\alpha \in (1,2)$, or has finite variance, several annealed properties also hold in a quenched setting. In particular, the following properties hold for the critical root cluster on almost every realisation of the tree: (1) the rescaled survival probabilities converge; (2) the Yaglom limit or its stable analogue hold - in particular, conditioned on survival, the number of vertices at generation $n$ that are connected to the root cluster rescale to a certain (explicit) random variable; (3) conditioned on initial survival, the sequence of generation sizes in the root cluster rescales to a continuous--state branching process. This strengthens some earlier results of Michelen (2019) who proved (1) and (2) in the case where the initial tree has an offspring distribution with all moments finite.
    \smallbreak\noindent
    \emph{MSC2020:} Primary: 60K35, 60J80. Secondary: 60G52 \\
    \emph{\keywordsname:} critical percolation; incipient infinite cluster; branching processes
\end{abstract}

\section{Introduction}\label{sec:intro}

Let $\bT$ be a supercritical Galton--Watson tree with law $\bP$ such that the law of its offspring distribution is in the domain of attraction of a stable law with parameter $\alpha\in(1,2)$, or has finite variance, and is supported on $\gk{1,2,\ldots}$. Suppose that $\mu>1$ is the mean of the offspring distribution. The aim of this paper is to obtain quenched results for critical (Bernoulli) percolation on $\bT$. 

Let $\bT_n$ be the vertices of generation $n$ in $\bT$. It is well-known that
\begin{equation}\label{eqn:W convergence}
    \bW_n := \frac{\abs{\bT_n}}{\mu^n}\to \bW\, ,
\end{equation}
as $n\to\infty$ in $\mathrm{L}^p$ for $1\le p<\alpha$ and almost surely \cite[Theorems 0 and 5]{bingham1974asymptotic}. Moreover, it was shown by Lyons \cite[Theorem 6.2 and Proposition 6.4]{lyons1990random} that $\bP$-almost surely, the critical percolation probability on $\bT$ is equal to $\frac{1}{\mu}$. Given $\bT$, we run critical percolation (i.e. with retention probability $\frac{1}{\mu}$) on the edges of $\bT$. Let $\Pt$ denote the (random) law of percolation on $\bT$ and consider the (critical) root cluster on $\bT$. Under the \textit{annealed} law (i.e. sampling first the tree then running the percolation process and then averaging over the tree), denoted $\P = \bP \circ \Pt$, the root cluster has the law of a critical Galton--Watson tree with offspring distribution in the domain of attraction an $\alpha$-stable law (with the same $\alpha$, or with $\alpha=2$ in the finite variance case). As a result, precise asymptotics for the annealed critical cluster are well-known. The purpose of this paper is to show that the same results hold in the \textit{quenched} setting, i.e. for $\bP$-almost every realisation of $\bT$. We note that some of our results were previously obtained by Michelen \cite{Michelen} under the assumption that the offspring distribution has all moments finite.

We start with a brief recap of known results in the annealed setting.

Firstly, for each $n \geq 0$ let $Y_n=\#\gk{v\in \bT_n\text{ connected to the root}}$, and set $\beta = \frac{1}{\alpha-1}$. Then there exists an explicit constant $\CC \in (0, \infty)$, depending on the offspring distribution, such that
\begin{equation}\label{eqn:slack prob convergence}
    n^{\beta}\P\rk{Y_n>0}\to \CC\, .
\end{equation}
This result was first obtained by Kolmogorov \cite{kolmogorov1938solution} in the case where the offspring distribution has has finite third moment, then by Kesten, Ney and Spitzer \cite{kesten1966galton} under a finite variance assumption, and was finally extended by Slack \cite[Lemma 2]{slack1968branching} to the stable window.

One can also say a lot more. In the finite variance case, Kesten, Ney and Spitzer \cite{kesten1966galton} also showed that the law of $n^{-1}Y_n$ conditioned on the event $Y_n>0$ converges in distribution to an exponential random variable. This is in fact known as \textit{Yaglom's limit} as it was first proved by Yaglom \cite{yaglom1947certain} under a third moment assumption. In the stable case, this was again extended by Slack \cite[Theorem 1]{slack1968branching} who showed that the rescaled random variables $\rk{n^{-\beta}Y_n}_{n\geq 1}$ under the conditioning $Y_n>0$ converge to an $\alpha$-stable random variable via the following convergence of the Laplace transforms:
\begin{equation}\label{eqn:slack Laplace convergence}
    \E{\ek{\ex^{-\theta n^{-\beta}Y_{n}} \middle\vert Y_{n}>0}} \to \phi (\theta) := 1-\CC^{-1}\theta (1+(\CC^{-1}\theta)^{\alpha-1})^{-\beta}.
\end{equation}
Moreover, in the finite variance case (in which case we set $\alpha=2$ and $\beta=1$), $Y$ has the exponential distribution with parameter $\CC$.

Note that, if $Y$ is a random variable with Laplace transform $\phi$ as above, then
\begin{equation}\label{eqn:Y mean}
    \E \ek{Y} = \lim_{\theta \to 0} \frac{1-\phi (\theta)}{\theta} = \CC^{-1}.
\end{equation}
One can in fact strengthen this last result quite dramatically to ask about convergence of the whole sequence of generation sizes under the conditioning that $Y_{n\epsilon}>0$ for some fixed $\epsilon>0$. Under this conditioning, the process $(n^{-\beta}Y_{n(t+\eps)})_{t \geq 0}$ converges to a \textit{continuous--state branching process} (CSBP) with initial condition determined by \eqref{eqn:slack Laplace convergence}. More precisely, the limiting process $(Y_t)_{t \geq 0}$ is a Markov process such that $Y_0$ is a random variable with Laplace transform given by $\phi (\theta \epsilon^{\beta})$, and for all $t,x, y \geq 0$ its transition kernels $P_{t}(x,y)$ satisfy the \textit{branching property}
\begin{equation}\label{eqn:branching prop}
    P_{t}(x+y,\cdot) = P_{t}(x,\cdot) \ast P_{t}(y,\cdot).
\end{equation}
The form of the transition kernel $P_t$ above is determined by a function known as the \textit{branching mechanism} which takes the form $\psi (\lambda) = \tilde{c}\lambda^{\alpha}$ in the stable case, and $\psi (\lambda) = \tilde{c}\lambda^{2}$ in the finite variance case, where $\tilde{c}$ is a constant depending on the precise form of the offspring distribution; we compute it in \cref{LemmaConstants}. We refer to \cite{li2012continuousstate} for further background on CSBPs.

Finally, rather than conditioning on the event $\{Y_n>0\}$, one can also look at the effect of conditioning on survival to infinity, and construct the corresponding law. In the annealed model, the law of the root cluster conditioned to survive forever has the law of a critical Galton--Watson process conditioned to survive forever and was constructed by Kesten \cite[Lemma 1.14]{kesten1986subdiffusive}. The critical cluster conditioned to survive is known as the \textit{incipient infinite cluster (IIC)}.

Our goal in this paper is to establish quenched versions of the above results. First steps in this direction were achieved by Michelen \cite[Theorem 1.3]{Michelen} who proved that a quenched version of \eqref{eqn:slack prob convergence} holds under a fourth moment assumption on the offspring distribution, and that a quenched version of \eqref{eqn:slack Laplace convergence} holds under the assumption that all moments are finite. In this paper we extend this to the general case of finite variance or stable tails. In particular we will work under the following assumption for all of our results.

\begin{assump}\label{assumption}
    Assume that the offspring distribution of $\bT$ supported on $\{1, 2, \ldots\}$, its mean is given by $\mu>1$, and that one of the following conditions holds.
    \begin{enumerate}[(a)]
        \item The offspring distribution of $\bT$ has finite variance. In this case set $\CC = \frac{2\E \ek{\abs{\bT_1}}^2}{\E \ek{\abs{\bT_1}\rk{\abs{\bT_1}-1}}}$ and $\beta=1$.
        \item The offspring distribution of $\bT$ has infinite variance with stable (power-law) tails, meaning that there exist $c_1 \in (0, \infty)$ and $\alpha \in (1,2)$ such that $\bP \rk{|\bT_1| \geq x} \sim c_1 x^{-\alpha}$ as $x \to \infty$. Here, we write $a_n\sim b_n$ for two sequences $\rk{a_n}_n$ and $\rk{b_n}_n$ if $a_n=b_n(1+o(1))$. In this case, we set $\beta=(\alpha-1)^{-1}$,
        \begin{equation}
            \CC=c^{-\beta}_1\mu^{\alpha\beta}\Gamma(1-\alpha)^{-\beta}\beta^\beta\qquad\text{and}\qquad \phi (\theta)=1-\CC^{-1}\theta (1+(\CC^{-1}\theta)^{\alpha-1})^{-\beta}\, .
        \end{equation}
    \end{enumerate}
\end{assump}
The derivation of the constant $\CC$ and the Laplace transform are in the Appendix; see Lemma \ref{LemmaConstants}. 
We would like to point out that we could also deal with the case where $\alpha=2$ and/or where there are slowly-varying corrections to the tail decay in case (b) above (using the exactly the same proofs), but have omitted this in order to lighten the notation. 

We furthermore note that the assumption that $\bT$ has no leaves is not a serious restriction since any supercritical Galton--Watson tree with leaves can be decomposed into a core consisting of a supercritical Galton--Watson tree with no leaves, to which several finite subcritical Galton--Watson trees are attached (this is the Harris decomposition; see \cite[Proposition 5.28]{lyons2017probability}). The p.g.f. of the law of the core as well as the distribution of the additional subcritical trees can be obtained explicitly from the original probability distribution. We anticipate that this can be used to extend our theorems to the case where leaves are permitted (though it requires some work and would impact the constants).

The first result is the following.
\begin{theorem}\label{thm:ConvergenceOfPartitionFunction}
For $\bP$-almost every $\bT$, we have that
\begin{equation}\label{eqn:partition prob convergence}
    n^{\beta}\Pt\rk{Y_n>0}\to \CC\bW\, .
\end{equation}
\end{theorem}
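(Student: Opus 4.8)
Write $f_n(\bT)\defeq\Pt(Y_n>0)$, so that the assertion is $n^\beta f_n(\bT)\to\CC\bW$ for $\bP$-a.e.\ $\bT$. Let $\E$ denote the annealed expectation and $\bar f_n\defeq\E f_n(\bT)=\P(Y_n>0)$, so \eqref{eqn:slack prob convergence} reads $n^\beta\bar f_n\to\CC$; in particular $\bar f_n\asymp n^{-\beta}$. Two structural facts organise the argument. First, $\{Y_{n+1}>0\}\subseteq\{Y_n>0\}$, so $n\mapsto f_n(\bT)$ is non-increasing; hence once convergence is known along a subsequence $(n_k)$ with $n_{k+1}/n_k\to1$, the sandwich $(n/n_{k+1})^\beta f_{n_{k+1}}(\bT)\le f_n(\bT)\le(n/n_k)^\beta f_{n_k}(\bT)$ promotes it to the full sequence. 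Second, fix $m$, write $\bT^{(v)}$ for the descendant subtree of $v\in\bT_m$ and $L_m\subseteq\bT_m$ for the set of level-$m$ vertices in the root cluster; conditionally on $\bT$ and the percolation on the first $m$ levels, the $(\bT^{(v)})_{v\in\bT_m}$ are, given $\mathcal F_m:=\sigma(\bT\text{ up to level }m)$, i.i.d.\ copies of $\bT$ carrying independent percolation, and $\Pt(v\in L_m)=\mu^{-m}$. This gives
\begin{equation*}
 f_n(\bT)=\Et\Big[1-\prod_{v\in L_m}\big(1-f_{n-m}(\bT^{(v)})\big)\Big]
 =\mu^{-m}\sum_{v\in\bT_m}f_{n-m}(\bT^{(v)})-E_{n,m}(\bT),\qquad
 0\le E_{n,m}(\bT)\le\Et\Big[\Big(\sum_{v\in L_m}f_{n-m}(\bT^{(v)})\Big)^{2}\Big].
\end{equation*}

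The keystone is an a priori estimate: for a fixed $p\in(1,\alpha)$ (with $\alpha=2$ in case (a)), $\E[f_n(\bT)^{p}]\lesssim n^{-p\beta}$. This I would prove by a bootstrap based on the decomposition. From $f_n(\bT)\le\mu^{-m}\sum_{v\in\bT_m}f_{n-m}(\bT^{(v)})=\bW_m\bar f_{n-m}+\mu^{-m}\sum_{v\in\bT_m}(f_{n-m}(\bT^{(v)})-\bar f_{n-m})$, applying (conditionally on $\mathcal F_m$) the von Bahr--Esseen inequality to the i.i.d.\ centred second sum, and using $\sup_m\E[\bW_m^p]<\infty$ (from \eqref{eqn:W convergence}) and $\bar f_{n-m}^{\,p}\lesssim(n-m)^{-p\beta}$, one gets
\begin{equation*}
 \E[f_n(\bT)^{p}]\;\le\; C_1(n-m)^{-p\beta}+C_2\,\mu^{-m(p-1)}\,\E[f_{n-m}(\bT)^{p}].
\end{equation*}
Choosing $m=m_0$ large enough that $C_2\mu^{-m_0(p-1)}<1$ turns this into a contraction in $n$, which iterates to $\E[f_n(\bT)^{p}]\lesssim n^{-p\beta}$; in particular $(n^\beta f_n(\bT))_n$ is $L^{p}$-bounded, hence uniformly integrable. (An a.s.\ lower bound $\liminf_n n^\beta f_n(\bT)>0$, if wanted, follows from Paley--Zygmund together with $\Et[Y_n]=\bW_n$, $\Et[Y_n^2]=\sum_{j=0}^n\mu^{-(2n-j)}\#\{(v,w)\in\bT_n^2:v\wedge w\in\bT_j\}$, and \eqref{eqn:W convergence}.)

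With the a priori bound available, I would establish the $L^1$-convergence $n^\beta f_n(\bT)\to\CC\bW$ with a polynomial rate. Taking conditional expectations in the decomposition, $\E[f_n(\bT)\mid\mathcal F_m]=\Et[1-(1-\bar f_{n-m})^{|L_m|}]$; since $\bar f_{n-m}\to0$, $\Et[|L_m|]=\bW_m$, and the annealed generation-$m$ cluster size has the Yaglom-type tails behind \eqref{eqn:slack Laplace convergence}, a Taylor expansion (together with a quantitative version of \eqref{eqn:slack prob convergence}, which one also needs to supply) gives $\|\E[n^\beta f_n(\bT)\mid\mathcal F_m]-\CC\bW_m\|_1\lesssim(\log n)/n$ uniformly over $m\le\log n$. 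On the other hand,
$n^\beta f_n(\bT)-\E[n^\beta f_n(\bT)\mid\mathcal F_m]=n^\beta\mu^{-m}\sum_{v\in\bT_m}(f_{n-m}(\bT^{(v)})-\bar f_{n-m})-n^\beta\big(E_{n,m}(\bT)-\E[E_{n,m}(\bT)\mid\mathcal F_m]\big)$,
where the second term is negligible and the first is an i.i.d.-centred (given $\mathcal F_m$) sum whose $L^{p}$-norm von Bahr--Esseen and the a priori bound control by $\lesssim n^\beta\mu^{-m(p-1)/p}(n-m)^{-\beta}\lesssim\mu^{-m(p-1)/p}+o_n(1)$. Combining these with the martingale estimate $\|\bW-\bW_m\|_{p}\le C\varrho^{m}$ for some $\varrho<1$, and optimising over $m=m(n)\asymp\log n$, yields $\|n^\beta f_n(\bT)-\CC\bW\|_1\lesssim(\log n)/n$. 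This rate is summable along a polynomially-growing subsequence $(n_k)$ with $n_{k+1}/n_k\to1$, so Borel--Cantelli gives $n_k^\beta f_{n_k}(\bT)\to\CC\bW$ $\bP$-a.s., and the monotonicity sandwich upgrades this to $n^\beta f_n(\bT)\to\CC\bW$ $\bP$-a.s.

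The work is concentrated not in one step but in the uniform control of the error terms --- the linearisation error $E_{n,m}$, the Taylor remainder in $\E[f_n\mid\mathcal F_m]$, and the von Bahr--Esseen fluctuation --- each of which must be estimated via the a priori bound and the annealed asymptotics with enough uniformity that the truncation level $m$ can be sent to infinity along with $n$ at the end. The heavy-tailed case (b) of \cref{assumption}, in which $\E[Y_n^2]$ and $\E[\bW^2]$ are infinite, is precisely what forces the entire analysis to be carried out in $L^{p}$ with $p<\alpha$ rather than in $L^2$, and rules out shortcutting the a priori estimate by a second-moment computation.
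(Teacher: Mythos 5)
Your architecture coincides with the paper's: reduce to a polynomially growing subsequence via monotonicity of $n\mapsto\Pt(Y_n>0)$, decompose at an intermediate level $m\asymp\log n$ into the linear term $\mu^{-m}\sum_{v\in\bT_m}f_{n-m}(\bT^{(v)})$ plus an inclusion--exclusion error, concentrate the linear term around $\bW_m\bar f_{n-m}$ conditionally on $\Fcal_m$, and finish with Borel--Cantelli. (The paper concentrates the linear term with the elementary bound $\Var{\Pt(v\lrd\bT_n)}\le\bE[\Pt(v\lrd\bT_n)]$ rather than via your a priori $L^p$ estimate and von Bahr--Esseen; your a priori bound is correct and is essentially the paper's Lemma 5.1, proved there by the same $X_1+X_2$ splitting, but it is not needed for this theorem.) However, your treatment of the error term $E_{n,m}$ fails as written in case (b) of Assumption 1.1. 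You bound $E_{n,m}\le\Et\bigl[(\sum_{v\in L_m}f_{n-m}(\bT^{(v)}))^2\bigr]$ and declare it negligible inside an $L^1(\bP)$ estimate; but expanding the square gives $\sum_{u\ne v}\mu^{-(2m-|u\wedge v|)}f_{n-m}(\bT^{(u)})f_{n-m}(\bT^{(v)})$, and the $\bP$-expectation of $\sum_{u\ne v}\mu^{-(2m-|u\wedge v|)}$ involves $\bE[|\bT_1|(|\bT_1|-1)]$ through the count of pairs with most recent common ancestor at a given level --- this is infinite when the offspring law has infinite variance, so the quadratic bound is not integrable and the same problem infects the Taylor remainder in $\Et[1-(1-\bar f_{n-m})^{|L_m|}]$. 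The paper's Lemma 3.2 circumvents this with a fractional moment $p=\frac{\alpha+1}{4}<1$, decomposing over the level of $u\wedge v$ and using subadditivity of $x\mapsto x^p$ before Markov and Borel--Cantelli. Alternatively (and more simply, for this theorem only), one can avoid the quadratic bound altogether: since $\bE[\mu^{-m}\sum_v f_{n-m}(\bT^{(v)})]=\bar f_{n-m}$ and $\bE[f_n]=\bar f_n$, one has exactly $\bE[E_{n,m}]=\bar f_{n-m}-\bar f_n=O(mn^{-\beta-1})$ by Slack's ratio asymptotic, which is summable after Markov along your subsequence.

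The second issue is your claimed rate $\|n^\beta f_n(\bT)-\CC\bW\|_1\lesssim(\log n)/n$, which requires $|n^\beta\bar f_n-\CC|\lesssim(\log n)/n$. Assumption 1.1 gives only the tail asymptotic $\bP(|\bT_1|\ge x)\sim c_1x^{-\alpha}$ with no rate, and Slack's and Kesten--Ney--Spitzer's theorems then yield \eqref{eqn:slack prob convergence} with no rate either, so this quantitative input cannot be supplied in general. The gap is easily repaired by not folding the deterministic annealed asymptotic into the Borel--Cantelli estimate: write
\begin{equation*}
n^\beta f_n(\bT)-\CC\bW=\bigl(n^\beta f_n(\bT)-n^\beta\bar f_{n-m}\bW_m\bigr)+\bW_m\bigl(n^\beta\bar f_{n-m}-\CC\bigr)+\CC\bigl(\bW_m-\bW\bigr),
\end{equation*}
where only the first bracket needs a summable probability bound (via the fluctuation and $E_{n,m}$ estimates above), while the second and third vanish $\bP$-almost surely by \eqref{eqn:W convergence} and the qualitative limit \eqref{eqn:slack prob convergence}. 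This is precisely how the paper concludes. With these two repairs your proof goes through and is essentially the paper's.
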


In addition, we obtain convergence of the rescaled generation sizes.

\begin{theorem}\label{thm:ConvergenceOfLaplaceTransform}
For $\bP$-almost every $\bT$, we have that the law of $Y_n$ conditioned on survival converges, i.e.
\begin{equation}
    \rk{Y_n|Y_n>0}\xrightarrow[n\to\infty]{(d)}Y\, ,
\end{equation}
where $Y$ has Laplace transform $\phi$ as in \eqref{eqn:slack Laplace convergence}.
\end{theorem}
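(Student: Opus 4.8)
\noindent\textit{Proof strategy.}
The plan is to deduce \cref{thm:ConvergenceOfLaplaceTransform} from \cref{thm:ConvergenceOfPartitionFunction} together with an almost sure estimate on the quenched generating function of $Y_n$ near $s=1$. Write $G_n^{\bT}(s)\defeq\Et\ek{s^{Y_n}}$ for $s\in[0,1]$; then for every $\theta>0$,
\begin{equation*}
    \Et\ek{\ex^{-\theta n^{-\beta}Y_n}\,\middle\vert\, Y_n>0}
    =1-\frac{1-G_n^{\bT}\rk{\ex^{-\theta n^{-\beta}}}}{\Pt(Y_n>0)}\, .
\end{equation*}
Since \cref{thm:ConvergenceOfPartitionFunction} gives $n^{\beta}\Pt(Y_n>0)\to\CC\bW$ and $\bW>0$ holds $\bP$-a.s.\ (Kesten--Stigum), it suffices to prove that for $\bP$-a.e.\ $\bT$ and every $\theta$ in a fixed countable dense set $\Dcal\subseteq(0,\infty)$,
\begin{equation}\label{eqn:plan-key}
    n^{\beta}\rk{1-G_n^{\bT}\rk{\ex^{-\theta n^{-\beta}}}}\xrightarrow[n\to\infty]{}\CC\bW\,(1-\phi(\theta))\, .
\end{equation}
Indeed, \eqref{eqn:plan-key} combined with the previous display and \cref{thm:ConvergenceOfPartitionFunction} yields $\Et[\ex^{-\theta n^{-\beta}Y_n}\mid Y_n>0]\to\phi(\theta)$ for $\theta\in\Dcal$; monotonicity in $\theta$ of both sides together with continuity of $\phi$ then extends this to all $\theta>0$, and since $\phi(0+)=1$ the continuity theorem for Laplace transforms gives the asserted convergence in distribution, the limit being the law of $Y$ (which exists and has Laplace transform $\phi$ by Slack's theorem).

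To prove \eqref{eqn:plan-key} I would run the same argument as for \cref{thm:ConvergenceOfPartitionFunction}, the only change being that the initial condition $s=0$ is replaced by $s_n\defeq\ex^{-\theta n^{-\beta}}$, which tends to $1$. Fix $m\in\N$; write $\bT^{(v)}$ for the subtree of $\bT$ rooted at $v\in\bT_m$, let $\bW^{(v)}$ be its martingale limit, and let $S_m\subseteq\bT_m$ be the set of generation-$m$ vertices joined to the root by an open path. Conditionally on $\bT$ and the percolation configuration on the first $m$ generations, the descendant cluster sizes $Y^{(v)}_{n-m}$ (the number of generation-$n$ vertices below $v$ joined to $v$ by an open path) are independent across $v\in S_m$, so
\begin{equation*}
    1-G_n^{\bT}(s_n)=\Et\!\ek{\,1-\prod_{v\in S_m}\Big(1-\big(1-G_{n-m}^{\bT^{(v)}}(s_n)\big)\Big)}\, .
\end{equation*}
For fixed $m$ one has $1-G_{n-m}^{\bT^{(v)}}(s_n)\le\Pt(Y^{(v)}_{n-m}>0)\to0$ as $n\to\infty$, so I would linearise the product --- the higher-order terms being bounded by $\max_{v\in\bT_m}\big(1-G_{n-m}^{\bT^{(v)}}(s_n)\big)$ times the first-order term, hence negligible since $|\bT_m|<\infty$ --- and use $\Pt(v\in S_m)=\mu^{-m}$ to obtain
\begin{equation}\label{eqn:plan-rec}
    n^{\beta}\rk{1-G_n^{\bT}(s_n)}=(1+o(1))\,\mu^{-m}\sum_{v\in\bT_m}(n-m)^{\beta}\rk{1-G_{n-m}^{\bT^{(v)}}(s_n)}\, .
\end{equation}

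The core of the argument is to show that $L^{\theta}(\bT)\defeq\lim_n n^{\beta}\big(1-G_n^{\bT}(\ex^{-\theta n^{-\beta}})\big)$ exists for $\bP$-a.e.\ $\bT$; I would do this exactly via the concentration estimate behind \cref{thm:ConvergenceOfPartitionFunction}, controlling conditionally on $\bT$ up to generation $m$ suitable fractional (order $1+\delta$ with $\delta<\alpha-1$) moments of $n^{\beta}(1-G_n^{\bT}(s_n))$ over the percolation randomness and showing that the sequence is Cauchy, with the a priori bound $1-G_n^{\bT}(s_n)\le(1-\ex^{-\theta n^{-\beta}})\,\Et\ek{Y_n}\le\theta n^{-\beta}\bW_n$ keeping it bounded and uniformly integrable (as $\bW_n\to\bW$ in $\mathrm{L}^1$). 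Granting existence, letting $n\to\infty$ in \eqref{eqn:plan-rec} --- absorbing the harmless parameter drift $\theta\mapsto\theta((n-m)/n)^{\beta}$ via monotonicity in $\theta$ and continuity of $\phi$ --- gives $L^{\theta}(\bT)=\mu^{-m}\sum_{v\in\bT_m}L^{\theta}(\bT^{(v)})$ $\bP$-a.s.\ for every $m$. Taking expectations and invoking Slack's annealed asymptotics \eqref{eqn:slack prob convergence}--\eqref{eqn:slack Laplace convergence}, which give $\E\big[n^{\beta}(1-G_n^{\bT}(\ex^{-\theta n^{-\beta}}))\big]\to\CC(1-\phi(\theta))$, identifies $\E\ek{L^{\theta}(\bT)}=\CC(1-\phi(\theta))$, just as the constant $\CC\bW$ is pinned down in \cref{thm:ConvergenceOfPartitionFunction}. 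Finally, $\mu^{-m}\sum_{v\in\bT_m}L^{\theta}(\bT^{(v)})\to\bW\,\E\ek{L^{\theta}(\bT)}$ $\bP$-a.s.\ as $m\to\infty$ by the strong law for Galton--Watson trees (the weights $L^{\theta}(\bT^{(v)})$ are i.i.d., nonnegative and bounded by $\theta\bW^{(v)}$, hence satisfy the $x\log^+x$ hypothesis), which yields $L^{\theta}(\bT)=\CC\bW(1-\phi(\theta))$, i.e.\ \eqref{eqn:plan-key}.

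The step I expect to be the main obstacle is the existence of $L^{\theta}(\bT)$: when $\alpha<2$ the offspring tail, the martingale limit $\bW$ and the limiting variable $Y$ all have infinite variance, so the concentration argument cannot be run with second moments and must instead be carried out with fractional moments and suitable truncations, exactly as in \cref{thm:ConvergenceOfPartitionFunction}. Once that machinery is in place, replacing the initial condition $0$ by $\ex^{-\theta n^{-\beta}}$ costs essentially nothing; the only genuinely new features are the parameter drift in \eqref{eqn:plan-rec} and the fact that the limiting constant is identified through \eqref{eqn:slack Laplace convergence} rather than through \eqref{eqn:slack prob convergence}.
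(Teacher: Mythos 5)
Your reduction to \eqref{eqn:plan-key} is sound, the linearised recursion \eqref{eqn:plan-rec} is correct for fixed $m$, and the identification of the constant via the smoothing-transform fixed point $L^{\theta}(\bT)=\mu^{-m}\sum_{v\in\bT_m}L^{\theta}(\bT^{(v)})$ would work once the limit $L^{\theta}(\bT)$ is known to exist. The subsequential version of that existence is also genuinely obtainable by the method of \cref{thm:ConvergenceOfPartitionFunction} (take $m=m_k$ growing with $n=n_k$, kill the double-connection term, and concentrate the single-vertex sum around its $\Fcal_{m_k}$-conditional mean); this is essentially what the paper does in \cref{PropositionConvergenceFixedTheta}.

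The gap is the claim that this machinery gives convergence of $n^{\beta}\rk{1-G_n^{\bT}(\ex^{-\theta n^{-\beta}})}$ along the \emph{full} sequence. The Borel--Cantelli bounds behind \cref{lem:connection single} and \cref{lem:variance} are only power-law small (e.g.\ $n^{-\e p/(\alpha-1)}(\log n)^p$ with $p<\alpha/2$), and for $\alpha<2$ no admissible choice of parameters makes them summable over all $n$; they are summable only along a polynomially sparse subsequence $(n_k)$. For \cref{thm:ConvergenceOfPartitionFunction} this costs nothing because $n\mapsto\Pt(Y_n>0)$ is monotone (\cref{lem:subsequence suff}), but $n\mapsto n^{\beta}\rk{1-G_n^{\bT}(\ex^{-\theta n^{-\beta}})}$ has no quenched monotonicity in $n$ (the one-step comparison $\prod_{v\in S_n}((1-p_c)+p_cs)^{d_v}\ge s^{|S_n|}$ fails for vertices of large degree), and your recursion \eqref{eqn:plan-rec} relates level $n$ to level $n-m$ of the \emph{subtrees}, so it cannot be used to compare $a_n$ with $a_{n_k}$ on the same tree. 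A "Cauchy via fractional moments" argument would again have to sum error probabilities over all $n$ and runs into the same obstruction. This interpolation is in fact the main technical content of the paper's proof: one must show that, conditionally on $\{Y_n>0\}$ and on the set of open vertices at level $n$, the increment $Y_{n_k}-Y_n$ is $o\rk{Y_n\vee n^{\beta}}$ with high probability (the event $F_n$ of \eqref{EquationEn}, established in \cref{LemmaLink}--\cref{prop:verify condition} via the large-deviation estimate \cref{LemmaAnnealedLargeDeviation} for sums of i.i.d.\ critical cluster sizes), and then a stopping-time argument (\cref{lem:lift to whole sequence}) transfers the subsequential convergence to all $n$. Your proposal does not contain a substitute for this step.
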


We also add to this with convergence of the whole branching process. This was not considered in \cite{Michelen}.

\begin{theorem}\label{thm:ConvergenceOfBP}
For $\bP$-almost every $\bT$, under the conditioning $Y_n>0$ we have that the process $(n^{-\beta}Y_{n(t+1)})_{t \geq 0}$ converges in distribution to a continuous state branching process with the same branching mechanism $\psi$ appearing below \eqref{eqn:branching prop}, and with initial condition determined by \cref{thm:ConvergenceOfLaplaceTransform}. This convergence holds with respect to the Skorokhod-$J_1$ topology on the space $D([0, \infty), [0, \infty))$.
\end{theorem}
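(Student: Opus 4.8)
The plan is to prove \cref{thm:ConvergenceOfBP} by establishing, for $\bP$-almost every $\bT$, convergence of the finite-dimensional distributions of $(n^{-\beta}Y_{n(t+1)})_{t\ge0}$ under $\Pt(\cdot\mid Y_n>0)$ to those of the CSBP, together with tightness in $D([0,\infty),[0,\infty))$ for the $J_1$-topology; the two combine to give convergence in distribution.

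For the finite-dimensional distributions I would use the branching property of the percolation cluster. Conditionally on $\Fcal_m$, the $\sigma$-algebra generated by the states of all edges at depth at most $m$, on $\{Y_m>0\}$ the root cluster splits at generation $m$ into $Y_m$ independent sub-clusters, the one attached to the $i$-th connected vertex $v_i\in\bT_m$ being critical percolation on the subtree $\bT^{v_i}$. Applying this successively at the generations $n(t_j+1)$ reduces the computation of $\Et[\exp(-\sum_j\lambda_jn^{-\beta}Y_{n(t_j+1)})\mid Y_n>0]$ to a single \emph{macroscopic-step} estimate: for $\bP$-a.e.\ $\bT$ and every $s>0$,
\[
  \Et\big[\ex^{-\lambda n^{-\beta}Y_{m+ns}}\mid\Fcal_m\big]
  \;=\;\exp\!\Big(-u_s(\lambda)\,n^{-\beta}\!\!\sum_{i=1}^{Y_m}\bW^{(v_i)}\Big)\,(1+o(1)),
\]
where $\bW^{(v_i)}$ is the Kesten--Stigum limit \eqref{eqn:W convergence} of $\bT^{v_i}$ and $u_s$ is the Laplace semigroup of the CSBP, $\partial_s u_s(\lambda)=-\psi(u_s(\lambda))$, $u_0(\lambda)=\lambda$. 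To get this I would: (a) expand $\Et[\ex^{-\lambda n^{-\beta}Y_{m+ns}}\mid\Fcal_m]=\prod_{i}(1-\Et[1-\ex^{-\lambda n^{-\beta}Y^{v_i}_{ns}}\mid\Fcal_m])$; (b) apply \cref{thm:ConvergenceOfPartitionFunction,thm:ConvergenceOfLaplaceTransform} to each $\bT^{v_i}$ at generation $ns$ — after rescaling $n^{-\beta}=s^\beta(ns)^{-\beta}$ — to obtain $\Et[1-\ex^{-\lambda n^{-\beta}Y^{v_i}_{ns}}\mid\Fcal_m]=\CC\,\bW^{(v_i)}(ns)^{-\beta}(1-\phi(\lambda s^\beta))(1+o(1))$; and (c) verify the elementary identity $\CC s^{-\beta}(1-\phi(\lambda s^\beta))=u_s(\lambda)$, which holds precisely for the value of the constant in $\psi$ recorded in \cref{LemmaConstants} (and which in particular respects the semigroup property), controlling the product via $n^{-\beta}\sum_i(\bW^{(v_i)})^2=o(1)$ for the second-order error and $n^{-\beta}\max_i\bW^{(v_i)}=o(1)$ for the linearisation. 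The last ingredient is a quenched law of large numbers, $n^{-\beta}\sum_{v\in\bT_m:\,v\text{ connected to the root}}(\bW^{(v)}-1)\to0$ in $\Pt(\cdot\mid Y_n>0)$-probability (equivalently, the empirical mean of the subtree Kesten--Stigum limits over the cluster vertices tends to $\E_{\bP}[\bW]=1$), so that the sum in the display may be replaced by $Y_m$. Combining this with the branching property \eqref{eqn:branching prop} and \cref{thm:ConvergenceOfLaplaceTransform} for the initial marginal, the recursion collapses to the Laplace functional of the CSBP started from a variable with Laplace transform $\phi$ and branching mechanism $\psi$.

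For tightness I would use Aldous's criterion, in the form adapted to limits with jumps (e.g.\ controlling the modulus $w''$ or the compensator, since in case (b) the limit is a pure-jump-driven CSBP). Compact containment — tightness of $n^{-\beta}\max_{n\le k\le n(T+1)}Y_k$ under $\Pt(\cdot\mid Y_n>0)$ — I would obtain by comparison with the nonnegative $(\Fcal_k)$-martingale $\Wcal_k:=\sum_{v\in\bT_k:\,v\text{ connected to the root}}\bW^{(v)}$, which satisfies $\Et[\Wcal_k]=\bW$ (a one-generation computation using $\bW^{(v)}=\mu^{-1}\sum_{v'\text{ child of }v}\bW^{(v')}$): Doob's maximal inequality controls $\sup_k\Wcal_k$ on the survival event, and the law-of-large-numbers estimate above (in its uniform-in-$k$ form) transfers the bound to $Y_k$. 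The oscillation condition follows by invoking the macroscopic-step estimate at a stopping time — legitimate because the exploration of the percolation cluster has a strong-Markov-type property across a generation — together with a fractional-moment bound $\Et[|n^{-\beta}Y_{n(\tau+\delta+1)}-n^{-\beta}Y_{n(\tau+1)}|^{p}\mid\Fcal_{n(\tau+1)}]=O(\delta)$ for some $p\in(1,\alpha)$ (and $p=2$ in case (a)), extracted from the subtree estimates of step (b).

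The main obstacle is step (c): the quenched law of large numbers for the sum of subtree Kesten--Stigum limits over the sparse, $\asymp n^{\beta}$-many cluster vertices sitting deep inside the generation-$m$ layer. The difficulty is that these vertices form a tiny random subset of $\bT_m$ that could a priori concentrate in an atypical region; one must use that the survival-conditioned cluster fans out (its generation sizes are of order $n^\beta$ throughout $[n,\,n(t+1)]$) to decompose the generation-$m$ cluster vertices over their ancestors at an intermediate generation into many conditionally-independent small pieces, and then combine a fringe-tree law of large numbers for $\bT$ with concentration. This is genuinely more delicate in the heavy-tailed case (b), where $\bW$ has finite moments only of order $<\alpha$, so the averaging must be carried out in $L^p$ for $p\in(1,\alpha)$ and the fluctuations of $\Wcal_m-Y_m$ must be shown to be of smaller order than $n^\beta$; in case (a) the finite second moment of $\bW$ makes the corresponding estimates routine. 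A secondary technical point is making the $o(1)$ errors coming from \cref{thm:ConvergenceOfPartitionFunction,thm:ConvergenceOfLaplaceTransform} uniform enough over the many subtrees to survive the product over $\asymp n^{\beta}$ factors.
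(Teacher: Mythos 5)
Your route for the finite-dimensional distributions is genuinely different from the paper's, and it contains a gap that is more serious than you indicate. You propose to apply \cref{thm:ConvergenceOfPartitionFunction,thm:ConvergenceOfLaplaceTransform} to each subtree $\bT^{v_i}$ rooted at a cluster vertex $v_i$ of generation $m=n(t_j+1)$, replacing $\Et\bigl[1-\ex^{-\lambda n^{-\beta}Y^{v_i}_{ns}}\mid\Fcal_m\bigr]$ by $\CC\,\bW^{(v_i)}(ns)^{-\beta}(1-\phi(\lambda s^{\beta}))(1+o(1))$. But those theorems give almost sure convergence for each \emph{fixed} vertex's subtree, while here the vertices $v_i$ live at a generation that grows with $n$, so the subtrees are freshly resampled at every $n$; among the exponentially many subtrees at generation $m$ there is no reason for the quenched quantities to be uniformly close to their limits, and a second-moment control of the error per subtree is unavailable in case (b) since $\bW$ has no second moment. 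You call this a ``secondary technical point,'' but it is the central obstruction, and the natural repair is to compare each summand to its \emph{annealed} mean $\E\bigl[1-\ex^{-\lambda n^{-\beta}Y_{ns}}\mathbbm{1}\{Y_{ns}>0\}\bigr]$ rather than to $\CC\bW^{(v_i)}(ns)^{-\beta}(1-\phi)$: conditionally on $\Fcal_{\lfloor sn\rfloor}$ the per-subtree Laplace transforms are i.i.d.\ under $\bP$, their annealed variance is $O(n^{-\beta\alpha+\e})$ (the paper's \cref{lem:variance bound}), and Chebyshev concentrates the product of $\asymp n^{\beta}$ factors directly. This is what the paper does (\cref{lem:mgf starting from a convergence 1}), and it makes your detour through $\sum_i\bW^{(v_i)}$ — and hence the quenched law of large numbers you correctly identify as your ``main obstacle,'' which you do not prove — entirely unnecessary. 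Note also that the resulting concentration is only summable along a polynomial subsequence, so a lifting step via the continuity events $F_n$ of \cref{prop:verify condition} (the paper's \cref{lem:lift to whole sequence 2}) is still required; your proposal does not address this.

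Your tightness argument is also incomplete where it matters: Doob's inequality controls $\sup_k\Wcal_k$, but transferring this to $\sup_k Y_k$ again requires the unproved uniform comparison between $Y_k$ and $\Wcal_k$, and the oscillation bound at stopping times is only asserted. The paper avoids all of this with a soft argument: if either Billingsley condition for $J_1$-tightness failed with positive $\Pt$-probability on a positive-$\bP$-measure set of trees, then by Fatou and monotone convergence the \emph{annealed} sequence would fail to be tight, contradicting the known annealed invariance principle (\cref{prop:tightness}). I would recommend adopting that argument for tightness and the annealed second-moment scheme for the marginals; your semigroup identity $\CC s^{-\beta}(1-\phi(\lambda s^{\beta}))=u_s(\lambda)$ in step (c) is correct and is essentially the content of \eqref{eqn:annealed Poisson mgf} together with \cref{LemmaConstants}.
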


Finally, we turn to the IIC. Michelen shows in \cite[Lemma 3.9]{Michelen} that one can construct the IIC measure in the quenched setting essentially provided \cref{thm:ConvergenceOfPartitionFunction} holds (in his case this requires a fourth moment assumption; his proof works under our assumptions as well); in particular the law of the IIC, which we denote by $\mu_{\bT}$, satisfies 
\begin{equation}\label{eqn:IIC intro}
\mu_{\bT}(\Ccal_{\infty}[n]=t) = \frac{\sum_{v \in t_n}\bW_v}{\bW} \P_{\bT}(\Ccal[n]=t),
\end{equation}
where $\Ccal_{\infty}[n]$ denotes the IIC restricted to the first $n$ generations, $t_n$ denotes vertices in the $n^{th}$ generation of $t$, $\bW_v$ denotes the value of $\bW$ for the subtree rooted at $v$ and $\Ccal[n]$ denotes the first $n$ generations of an unconditioned critical cluster.

Michelen shows that under the assumption that all offspring moments are finite, the law of the size of the $n^{th}$ generation in $\Ccal_{\infty}$ is a size-biased version of the law appearing in \cref{thm:ConvergenceOfLaplaceTransform}. We establish a similar result under our assumption.

To state it, we first let $Y$ denote the random variable with Laplace transform appearing in \eqref{eqn:slack Laplace convergence}, and let $Y^*$ denote its size-biased version, meaning that (also using \eqref{eqn:Y mean}):
\begin{equation}
    \P \rk{Y^* \in [a,b]} = \frac{\E \ek{Y \mathbbm{1}\{Y \in [a,b]\}}}{\E \ek{Y}} = \CC\E \ek{Y \mathbbm{1}\{Y \in [a,b]\}}.
\end{equation}
We let $\bZ_n$ denote $n^{-\beta}Y_n$ under the law $\mu_{\bT}$.

\begin{theorem}\label{thm:IIC size biased}
For $\bP$-a.e. $\bT$, we have that (under $\mu_\bT$)
    \begin{equation}
       \bZ_n \xrightarrow[n\to\infty]{(d)}Y^*\, .
    \end{equation}
\end{theorem}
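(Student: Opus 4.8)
The plan is to deduce \cref{thm:IIC size biased} from \cref{thm:ConvergenceOfLaplaceTransform} together with the explicit description \eqref{eqn:IIC intro} of the IIC law. Starting from \eqref{eqn:IIC intro}, the distribution of $Y_n$ under $\mu_{\bT}$ can be written as a reweighting of its distribution under $\Pt$: for a bounded continuous test function $F$,
\begin{equation}\label{eqn:IIC reweight}
    \Er_{\mu_\bT}\ek{F(n^{-\beta}Y_n)} = \frac{1}{\bW}\,\Et_{\bT}\ek{F(n^{-\beta}Y_n)\sum_{v \in \Ccal_n} \bW_v}\, ,
\end{equation}
where the inner sum is over the vertices at generation $n$ of the unconditioned root cluster $\Ccal$, and $\bW_v$ is the martingale limit of the subtree of $\bT$ hanging off $v$. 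Conditionally on $\bT$ and on $\Ccal$, the variables $(\bW_v)_{v \in \bT_n}$ depend only on the subtrees below generation $n$, and one expects a law-of-large-numbers effect: since each $\bW_v$ has $\bP$-conditional mean $1$ and the number of surviving vertices $Y_n$ is of order $n^{\beta}$, the sum $\sum_{v \in \Ccal_n}\bW_v$ should be well-approximated by $Y_n$ for $\bP$-a.e. $\bT$. Making this precise is the crux of the argument.

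First I would show that, for $\bP$-a.e.\ $\bT$, one has the $\Pt$-in-probability estimate $n^{-\beta}\babs{\sum_{v \in \Ccal_n}\bW_v - Y_n} \to 0$. The natural route is a second-moment computation: work under the annealed-type measure obtained by re-sampling the subtrees below level $n$ (which, conditionally on $\bT$ up to level $n$ and on $\Ccal$, are i.i.d.\ copies of $\bT$), and estimate $\Et_\bT\ek[\big]{(\sum_{v \in \Ccal_n}(\bW_v-1))^2 \mid \Ccal_n}$. Because the $\bW_v$ are conditionally independent with mean $1$ and finite variance (the variance of $\bW$ is finite precisely in the finite-variance case; in the stable case $\bW \in \mathrm{L}^p$ only for $p<\alpha$, so here one should instead truncate $\bW_v$ at level $n^{\gamma}$ for a suitably small $\gamma$, control the truncation error using the tail of $\bW$, and apply an $\mathrm{L}^p$-moment bound for $1<p<\alpha$ rather than an $\mathrm{L}^2$ one). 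This gives a bound of order $Y_n \cdot \mathrm{Var}(\bW)$ (or its stable analogue), hence after dividing by $(n^{-\beta}Y_n)^2$ and using that $n^{-\beta}Y_n$ is tight and bounded away from $0$ with positive probability, one gets the claimed convergence to $0$ conditionally on survival; combine with a Borel--Cantelli / subsequence argument along $n$ and a monotonicity-in-$n$ interpolation if needed to upgrade to $\bP$-a.s.

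With that approximation in hand, the rest is bookkeeping. By \cref{thm:ConvergenceOfLaplaceTransform}, $(n^{-\beta}Y_n \mid Y_n>0) \Rightarrow Y$, and by \cref{thm:ConvergenceOfPartitionFunction}, $n^\beta \Pt(Y_n>0)\to \CC\bW$, so $\Et_\bT\ek{F(n^{-\beta}Y_n)\,\one\{Y_n>0\}} = \Pt(Y_n>0)\,\Er\ek{F(n^{-\beta}Y_n)\mid Y_n>0} \to (\CC\bW)^{-1}\cdot$\,wait, rather $\to \lim n^{-\beta}\cdot n^\beta\Pt(Y_n>0)\cdot\E\ek{F(Y)}$; more carefully, multiplying \eqref{eqn:IIC reweight} through and replacing $\sum_v \bW_v$ by $Y_n = n^{\beta}(n^{-\beta}Y_n)$ up to $o(n^\beta)$ error, one obtains
\begin{equation}
    \Er_{\mu_\bT}\ek{F(n^{-\beta}Y_n)} = \frac{n^\beta \Pt(Y_n>0)}{\bW}\,\Er\ek[\Big]{F(n^{-\beta}Y_n)\,\frac{n^{-\beta}\sum_{v\in\Ccal_n}\bW_v}{1} \,\Big|\, Y_n>0} + o(1)\, ,
\end{equation}
and inside the conditional expectation $n^{-\beta}\sum_{v\in\Ccal_n}\bW_v = n^{-\beta}Y_n + o_{\Pt}(1)$, which converges jointly with $n^{-\beta}Y_n$ to $Y$ (the error vanishes in probability and all quantities are uniformly integrable after a truncation, using $Y \in \mathrm{L}^1$ and a uniform-integrability bound on $n^{-\beta}Y_n$ conditioned on survival, e.g.\ via a uniform $\mathrm{L}^p$ bound for some $1<p<\alpha$ obtained from Slack-type estimates). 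Hence the limit is $\frac{\CC\bW}{\bW}\,\E\ek{F(Y)\,Y} = \CC\,\E\ek{Y F(Y)} = \E\ek{F(Y^*)}$, which is exactly the claim. The main obstacle is the first step — establishing the almost-sure law-of-large-numbers for $\sum_{v\in\Ccal_n}\bW_v$ over the random (correlated across $n$) set $\Ccal_n$, handling the heavy tails of $\bW$ in the stable regime via truncation, and securing enough uniform integrability to pass from convergence in probability to convergence of the size-biasing weight. Everything else follows by combining this with the two preceding theorems.
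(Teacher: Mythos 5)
Your overall strategy coincides with the paper's: rewrite $\Er_{\mu_\bT}\ek{F(\bZ_n)}$ as a size-biasing of the quenched conditioned law of $n^{-\beta}Y_n$ by the weight $\bW^{-1}\sum_{v\in\Ccal_n}\bW_v$, prove a law of large numbers showing that this weight concentrates around $n^{-\beta}Y_n\cdot$(constant), and then feed in \cref{thm:ConvergenceOfPartitionFunction} and \cref{thm:ConvergenceOfLaplaceTransform} to identify the limit as $\CC\,\E\ek{YF(Y)}=\E\ek{F(Y^*)}$. The only real difference of route is that the paper never manipulates the $\bW_v$ directly: it works with the finite-horizon probabilities $p_v^{n,M}=\Pt\rk{v\lrd\bT_{n+M}}$ (whose $M\to\infty$ limits are $\CC\bW_v$) and the representation $\mu_\bT|_n[t]=\lim_M\Pt\rk{\Ccal_{p_c}[n]=t\,|\,0\leftrightarrow\bT_{n+M}}$ of \cref{lem:IIC well defined}, taking subsequences in $M$ as well as in $n$ (\cref{lem:IIC subsequence convergence}). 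Your truncation of $\bW_v$ at $n^{\gamma}$ plays exactly the role of the paper's truncation $p_v^{n,M}\1\{p_v^{n,M}\le KM^{-\beta}\}$ with $K=n^{\beta(1-\eps)}$, and the exponent window does close: one needs $\gamma>\beta/\alpha$ so that no surviving vertex exceeds the truncation level, and $\gamma<\beta/(2-\alpha)$ (up to a $1/A$ correction) so that Chebyshev on the truncated sum beats the factor $n^{\beta}$ lost to the survival conditioning; this window is nonempty precisely because $\alpha>1$. So the LLN step is viable, though note that the survival conditioning is what forces the Borel--Cantelli onto the polynomial subsequence $(n_k)$ in the first place — your "dividing by $(n^{-\beta}Y_n)^2$ and using tightness" hides the fact that the unconditioned deviation probability must be $o(n^{-\beta})$, not merely $o(1)$.

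The genuine gap is the final lifting from $(n_k)$ to all $n$, which you dismiss as "a monotonicity-in-$n$ interpolation if needed". The quantity $\Er_{\mu_\bT}\ek{F(\bZ_n)}$ is not monotone in $n$, and comparing the weights $\sum_{v\in\Ccal_n}\bW_v$ and $\sum_{v\in\Ccal_{n_k}}\bW_v$ involves disjoint vertex sets with different martingale limits, so no interpolation is available at the level of your identity. The paper spends the entire final proof of \cref{thm:IIC size biased} on this: it uses the conditional-continuity events $F_n'$ built from \cref{prop:verify condition}, the genuine monotonicity of the events $\{0\lrd\bT_{n+M}\}$ in $n$ (which is only visible in the finite-$M$ formulation), and the two-sided sandwich \eqref{eqn:ext IIC prob LB}--\eqref{eqn:ext IIC prob LB 2} to transfer the limit from $n_k$ to $n$. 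Working directly with the $\bW_v$ you would need a substitute — for instance a proof that $\mu_\bT(|\bZ_n-\bZ_{n_k}|>\eta)\to0$, or an argument that your LLN and the convergence of $\Et\ek{F(n^{-\beta}Y_n)\,n^{-\beta}Y_n\,|\,Y_n>0}$ both hold conditionally on $\{Y_{n_k}>0\}$ with $n\in(n_{k-1},n_k]$ so that \cref{LemmaLink}-type estimates apply. This step needs to be supplied; the rest of your outline can be made rigorous along the lines you indicate.
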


We anticipate that an analogue of \cref{thm:ConvergenceOfBP} should also hold under $\mu_{\bT}$, with convergence to a CSBP conditioned to survive, but we have not proved this here.

The main observation that allows us to strengthen the results of Michelen is that it in fact suffices to prove the almost sure convergence along an appropriate subsequence $(n_k)_{k \geq 0}$, and then extend to all $n$ using continuity properties of the various probabilities and processes. For \cref{thm:ConvergenceOfPartitionFunction} this continuity is an immediate consequence of the monotonicity of the connection probabilities, and the subsequential convergence simply follows from a refinement of the arguments of Michelen. The proof of \cref{thm:ConvergenceOfLaplaceTransform} is quite different to that of Michelen, however, who used a martingale originally studied in \cite{michelen2020quenched} to apply the method of moments. Instead, we use a byproduct of the proof of \cref{thm:ConvergenceOfPartitionFunction} which tells us that for each $n \geq 0$, we can choose $m$ satisfying $0 \ll m \ll n$ and such that, with high probability on the event $\{Y_n>0\}$, there will only a be a single vertex at level $m$ that connects directly upwards to level $n$. Therefore, by averaging over the choice of this vertex and the subtree emanating from it, we can establish good enough concentration of the Laplace transform to show that \cref{thm:ConvergenceOfLaplaceTransform} holds along a subsequence. The bulk of the proof is devoted to proving continuity estimates for the process $(Y_n)_{n \geq 0}$ which allow us to extend this to convergence for all $n$. The strategy used to prove \cref{thm:ConvergenceOfBP} is very similar. The proof of \cref{thm:IIC size biased} is also quite different to that of Michelen, who used Chebyshev's inequality to directly analyse the expression appearing in \eqref{eqn:IIC intro}. This is not possible for us since under our assumptions $\bW$ does not in general have a finite second moment; however by some careful analysis we are still able to evaluate the limiting expression in \eqref{eqn:IIC intro} to obtain almost sure subsequential convergence, and once again lift this to full convergence using similar ideas to the previous theorems.

This paper also leaves some remaining questions open. In particular we have not considered the \textit{genealogy} of a critical cluster. In the annealed model it is known that the Gromov--Hausdorff--Prokhorov scaling limit of critical cluster (i.e. when viewing the cluster as a metric-measure space) conditioned to be large is either the Brownian continuum random tree (in the case of finite variance offspring distribution) or a stable L\'evy tree (in the stable case). In light of our other theorems we anticipate that this should also be true in the quenched setting, but this would require somewhat different techniques to establish.

\textbf{Organisation of the paper.}
In Section \ref{sec:FirstTheorem} we prove Theorem \ref{thm:ConvergenceOfPartitionFunction} and introduce some notation used throughout the rest of the paper. Section \ref{sctn:Laplace transform proof}, we prove the convergence of the rescaled generation sizes given in Theorem \ref{thm:ConvergenceOfLaplaceTransform}. This includes a strategy to lift the convergence from a subsequence to the full sequence that will also be re-used in following sections. In Section \ref{sec:BrachningProcessProf}, we prove Theorem \ref{thm:ConvergenceOfBP}, the convergence of the full branching process. The measure of the IIC is then constructed in Section \ref{sec:IICProof} and Theorem \ref{thm:IIC size biased} is proved. Finally, in the Appendix we calculate the constant $\CC$ and give a large deviation estimate for the sum of independent random variables in the domain of attraction of a stable law.

\textbf{Acknowledgements.}
The authors would like to express their gratitude to the program \textit{Global Challenges for Women in Math Science} at the Technical University Munich which enabled a research visit of EA to Munich during which some of this work was written. EA would also like to thank Matan Shalev and Pengfei Tang for helpful conversations on this topic. EA was supported by the ANR project ProGraM, reference ANR-19-CE40-0025.

\section{Notation and set up}\label{sctn:notation}
Before we begin, we fix some notation that we will use throughout the paper. Let $(\Omega,\Fcal,\bP)$ be a probability space such that under $\bP$, $\bT$ has the law of a supercritical Galton--Watson tree with no leaves and mean number of descendants $\mu>1$. Let $\Fcal_n$ be the sigma-algebra generated by the Galton--Watson tree up to generation $n$. For a tree $\bT$, we define the probability space $\rk{\Omega^\bT,\Gcal^\bT,\Pt}$ in which $\Pt$ is the law of an independent Bernoulli percolation on the edges of $\bT$ with retention probability $\mu^{-1}$, $\Omega^\bT$ is the collection of all possible subtrees of $\bT$ and $\Gcal^\bT$ is the canonical sigma-algebra (generated by cylinder sets). We write $\Gcal_n^\bT$ for the sigma-algebra generated by the percolation process up to level $n$ in the tree. For events $A\in \Gcal^\bT$, $B\in\Gcal_n^\bT$ and $p\in (0,1)$, we abbreviate
\begin{equation}\label{eqn:sigma alg conditioning}
    \gk{\Pt\rk{A|\Gcal_n^\bT,B}<p}:=\gk{\sup_{\omega\in B}\Pt\rk{A|\Gcal_n^\bT}\!(\omega)< p}\, .
\end{equation}

For two subsets $A,B$ of the tree $\bT$, $A \leftrightarrow B$ means $A$ is connected to $B$ by open edges. $A \lrd B$ means $A$ is connected to $B$ by a path along which the distance to the root is monotone. We also let $\root$ denote the root of $\bT$. For $u,v \in \bT$, we let $|u|$ denote the distance of $u$ from $\root$, we let $u \wedge v$ denote the most recent common ancestor of $u$ and $v$, and we write $u \preceq v$ if $u$ is an ancestor of $v$.

We choose the following subsequence $n_k\in\N$ such that $n_k\sim k^{\frac{\sqrt{\alpha}+1}{\sqrt{\alpha}-1}}=k^A$, where $A={\frac{\sqrt{\alpha}+1}{\sqrt{\alpha}-1}}$. We remark that for some proofs, we could have chosen subsequences which grow slower (though still polynomially). However, we work with the subsquence $n_k$, in order to unify the different proofs.\\
Furthermore, unless stated otherwise, constants denoted by $c,C$ only depend on $\bP$ and can change from line to line.
\section{Convergence of the survival probabilities: proof of \cref{thm:ConvergenceOfPartitionFunction}}\label{sec:FirstTheorem}

In this section we prove \cref{thm:ConvergenceOfPartitionFunction}. 
\subsection{Outline of strategy and notation}
The proof of \cref{thm:ConvergenceOfPartitionFunction} is divided into a series of smaller lemmas.

\begin{lemma}[Convergence along a subsequence is sufficient]\label{lem:subsequence suff}
Fix $\e=\frac{\alpha-1}{2}$ and $p=\frac{\alpha-\e}{2}\in (0,\alpha/2)$. Set $A'=2p^{-1}\rk{\alpha-1}\e^{-1}=16/(\alpha+1)$. Note that $A'<A={\frac{\sqrt{\alpha}+1}{\sqrt{\alpha}-1}}$ and recall that $n_k\sim k^A$. Then convergence of \eqref{eqn:partition prob convergence} along the subsequence $(n_k)_{k \geq 1}$ implies convergence for all $n$. 
\end{lemma}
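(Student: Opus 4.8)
The plan is to interpolate between consecutive terms of the subsequence $(n_k)_{k\ge1}$ using monotonicity of the connection probabilities in $n$. The key structural fact is that $n\mapsto \Pt(Y_n>0)$ is non-increasing: if the root cluster reaches generation $n+1$ then it certainly reaches generation $n$. Hence for any $n$ with $n_k\le n<n_{k+1}$ we have the sandwiching
\begin{equation}\label{eqn:sandwich}
\Pt\rk{Y_{n_{k+1}}>0}\le \Pt\rk{Y_n>0}\le \Pt\rk{Y_{n_k}>0}\, .
\end{equation}
Multiplying through by $n^\beta$ and using $n_k\le n<n_{k+1}$ gives
\begin{equation}
\rk{\frac{n_k}{n_{k+1}}}^\beta\cdot n_{k+1}^\beta\Pt\rk{Y_{n_{k+1}}>0}\le n^\beta\Pt\rk{Y_n>0}\le \rk{\frac{n_{k+1}}{n_k}}^\beta\cdot n_k^\beta\Pt\rk{Y_{n_k}>0}\, .
\end{equation}
So it suffices to check that the ratio $n_{k+1}/n_k\to1$ as $k\to\infty$. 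Since $n_k\sim k^A$ with $A$ a fixed constant, we have $n_{k+1}/n_k\sim \rk{(k+1)/k}^A\to1$. Combined with the assumed subsequential convergence $n_k^\beta\Pt(Y_{n_k}>0)\to \CC\bW$ (for $\bP$-a.e. $\bT$), both the upper and lower bounds above converge to $\CC\bW$, and the squeeze theorem yields $n^\beta\Pt(Y_n>0)\to\CC\bW$ along the full sequence, on the same almost sure event.

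The only subtlety to address carefully is that the constants $\e=\frac{\alpha-1}{2}$, $p=\frac{\alpha-\e}{2}$, $A'=2p^{-1}(\alpha-1)\e^{-1}=16/(\alpha+1)$ recorded in the statement play no role in this particular lemma — they are set up here only to be used in the subsequent lemmas that actually establish the subsequential convergence. I would simply remark that the inequality $A'<A$ holds (a short check: $16/(\alpha+1)<\frac{\sqrt\alpha+1}{\sqrt\alpha-1}$ for $\alpha\in(1,2)$, since the right-hand side exceeds $\frac{\sqrt2+1}{\sqrt2-1}=3+2\sqrt2>5.8$ while the left-hand side is at most $8$... this needs a genuine check, see below) and that $n_k$ grows polynomially, which is all that \eqref{eqn:sandwich} needs.

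The main (very mild) obstacle is verifying $A'<A$: one checks $16/(\alpha+1) < \frac{\sqrt\alpha+1}{\sqrt\alpha-1}$, equivalently $16(\sqrt\alpha-1)<(\alpha+1)(\sqrt\alpha+1)$. Writing $t=\sqrt\alpha\in(1,\sqrt2)$, the right side is $(t^2+1)(t+1)=t^3+t^2+t+1$ and the left side is $16(t-1)=16t-16$; the inequality $t^3+t^2+t+1>16t-16$, i.e. $t^3+t^2-15t+17>0$, holds on $(1,\sqrt2)$ since at $t=1$ the left side is $4>0$, at $t=\sqrt2$ it is $2\sqrt2+2-15\sqrt2+17=19-13\sqrt2>0$, and the cubic has no root in between (its derivative $3t^2+2t-15$ is negative throughout $(1,\sqrt2)$, so it is decreasing there and stays positive). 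Beyond this elementary inequality there is no real difficulty: the entire content of the lemma is the monotonicity sandwich \eqref{eqn:sandwich} together with $n_{k+1}/n_k\to1$, and the genuine work of proving \eqref{eqn:partition prob convergence} along $(n_k)$ is deferred to the lemmas that follow.
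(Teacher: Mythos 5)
Your proposal is correct and is essentially the paper's own argument: the paper likewise uses the monotonicity of $n\mapsto\Pt(Y_n>0)$ to sandwich $n^\beta\Pt(Y_n>0)$ between the rescaled subsequence terms and concludes from $n_{k+1}/n_k=1+o(1)$. Your additional verification that $A'<A$ is a harmless bonus but, as you note, plays no role in this lemma.
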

\begin{proof}
This follows from the fact that $n_{k+1}/n_k=1+o(1)$. Therefore if $n$ and $k$ are such that $n\in\ek{n_k,n_{k+1}}$, and the subsequential convergence holds, we have that
    \begin{equation}
        n^{\beta}\Pt\rk{Y_n>0}\le \rk{\frac{n_{k+1}}{n_k}}^\beta n_k^\beta\Pt\rk{Y_{n_k}>0}=\rk{1+o(1)}\CC\bW\, ,
    \end{equation}
and similarly for the lower bound.    
\end{proof}

In light of \cref{lem:subsequence suff}, we will prove almost sure convergence only along the subsequence $(n_k)_{k \geq 1}$. The proof will depend on the following inequality, which is a consequence of the inclusion-exclusion principle. For every $k \geq 1$ and $1 \leq m \leq \nk$, we have that
    \begin{equation}\label{eqn:inc exc}
        \abs{n_k^\beta\Pt\rk{Y_{n_k}>0}-n_k^\beta\sum_{v\in\bT_m}\Pt\rk{\root \leftrightarrow v\lrd \bT_{n_k}}}\le n_k^\beta\sum_{\mycom{u,v\in \bT_m}{u\neq v}}\Pt\rk{\root \leftrightarrow (u,v)\lrd \bT_{n_k}}\, ,
    \end{equation}
    where by $\root \leftrightarrow (u,v)\lrd \bT_{n_k}$ we mean that both $u$ and $v$ are connected to the root as well as to $\bT_{n_k}$.
    
Note that the sum on the left hand side counts the events in which there is a single vertex (connected to the root) at level $m$ that connects directly to level $n_k$, and the sum on the right-hand side counts the events in which there are at least two vertices (both connected to the root) at level $m$ that connect directly to level $n_k$.

    In what follows we will make an appropriate choice of $m$ so that the right-hand side of \eqref{eqn:inc exc} will be small, and the sum on the left hand side will be well-approximated by its mean, which is close to the desired limit.
    
In particular, for each $k \in \N$ we set
\begin{equation}\label{eqn:mk def}
    m_k = \lfloor  \frac{(1+\e)}{\rk{\alpha-1}\log\mu}\log n_k \rfloor.
\end{equation}
The proof of \cref{thm:ConvergenceOfPartitionFunction} then rests on the following two lemmas. In the first lemma we prove that, with this choice of $m_k$, the right-hand side of \eqref{eqn:inc exc} goes to $0$ almost surely. In the second lemma we prove that the sum appearing on the left hand side is well-approximated by its mean.

\begin{lemma}[Connection through a single node at lower level]\label{lem:connection single}
$\bP$-almost surely,
\begin{equation}
    \lim_{k\to\infty}n_k^\beta\sum_{\mycom{u,v\in \bT_{\tiny{m_k}}}{u\neq v}}\Pt\rk{\root \leftrightarrow (u,v)\lrd \bT_{n_k}}=0\, .
\end{equation}
\end{lemma}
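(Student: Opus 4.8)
\textbf{Proof plan for \cref{lem:connection single}.}
The plan is to bound the right-hand side of \eqref{eqn:inc exc} by decomposing according to the most recent common ancestor $w = u \wedge v$ of the two vertices $u,v \in \bT_{m_k}$. If $|w| = j$, then the event $\{\root \leftrightarrow (u,v) \lrd \bT_{n_k}\}$ forces: an open monotone path from $\root$ down to $w$ (probability $\mu^{-j}$), then from $w$ the two disjoint subtree paths out to $u$ and to $v$ at level $m_k$ (each contributing $\mu^{-(m_k-j)}$), and finally independent monotone connections from $u$ to $\bT_{n_k}$ and from $v$ to $\bT_{n_k}$. By the BK/van den Berg--Kesten inequality (or simply by conditioning on the tree and using independence of the percolation on disjoint edge sets below level $m_k$), the probability of the two downward connections to $\bT_{n_k}$ factorises into a product of two one-vertex connection probabilities. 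Thus
\begin{equation*}
    \sum_{\mycom{u,v\in \bT_{m_k}}{u\neq v}}\Pt\rk{\root \leftrightarrow (u,v)\lrd \bT_{n_k}} \le \sum_{j=0}^{m_k-1} \mu^{-j}\sum_{|w|=j}\ \sum_{\mycom{u,v \succeq w,\ |u|=|v|=m_k}{u \wedge v = w}} \mu^{-2(m_k - j)} q_{n_k}(u)\, q_{n_k}(v),
\end{equation*}
where $q_{n}(x) := \Pt(x \lrd \bT_{n} \mid x \text{ open to } \root)$ is the probability that the subtree rooted at $x$ survives monotone percolation to relative level $n - |x|$.

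Next I would take expectations over $\bT$ and exploit the branching structure. Conditionally on $\bT$ up to level $j$, the subtrees hanging off $w$ are i.i.d.\ copies of $\bT$, and $q_{n_k}(u)$ depends only on the subtree rooted at the child of $w$ that is an ancestor of $u$. So for each $w$ with $k_w$ children, the inner sum over pairs $u,v$ with $u\wedge v = w$ splits according to which (distinct) pair of children of $w$ the vertices $u,v$ descend from; summing $\mu^{-(m_k-j)}q_{n_k}(u)$ over all $u$ at level $m_k$ in a fixed child-subtree gives a quantity whose $\bP$-expectation is exactly $\E_\bT[\mu^{m_k-j-1}\cdot\#\{v \in \bT_{m_k - j - 1} : v \lrd \bT_{n_k - j - 1}\}]\cdot\mu^{-(m_k-j)}$, i.e.\ essentially $\mu^{-1}\mu^{(m_k-j)}\cdot \mu^{-(m_k - j)}\P(Y_{n_k - m_k}>0)$-type terms after using \eqref{eqn:slack prob convergence} together with the spinal/many-to-one identity $\bE\Et[\#\{v\in \bT_\ell : \root\lrd v\}] = 1$. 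Carrying this through, the expected right-hand side is bounded, up to constants, by
\begin{equation*}
    n_k^\beta \sum_{j=0}^{m_k-1}\mu^{-j}\ \bE\!\left[\binom{|\bT_{j}|\text{-weighted count of child pairs}}{}\right]\cdot \big(c\, (n_k - m_k)^{-\beta}\big)^2 \ \le\ C\, n_k^{\beta}\, (n_k)^{-2\beta}\sum_{j \ge 0}\mu^{-j}\,\bE[\cdots],
\end{equation*}
the key point being that two independent subtrees each surviving to level $\approx n_k$ costs a factor $(n_k^{-\beta})^2$ while we only multiply by $n_k^{\beta}$, leaving a net factor $n_k^{-\beta}$; the sum over $j$ of $\mu^{-j}$ times the (bounded in $\bP$-mean) branching weights converges, and the $\mu^{-(m_k-j)}$ normalisation keeps the level-$m_k$ sums under control. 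One must check the contribution is not spoiled by $w$ with atypically many children, but since the offspring law is in the domain of attraction of an $\alpha$-stable law it has a finite $(1+\delta)$-moment for small $\delta$, which suffices to control the pair-of-children count; the $j$-sum then converges geometrically. So $\bE$ of the quantity in the lemma is $O(n_k^{-\beta})$, which is summable over $k$ since $n_k \sim k^A$ with $A\beta > 1$.

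Finally, having shown $\sum_k \bE\big[n_k^\beta \sum_{u \neq v}\Pt(\cdots)\big] < \infty$, the Borel--Cantelli lemma (applied to the events that the $k$-th term exceeds, say, $k^{-2}$, noting these are deterministic functions of $\bT$) gives that $n_k^\beta \sum_{u\neq v \in \bT_{m_k}}\Pt(\root \leftrightarrow (u,v)\lrd \bT_{n_k}) \to 0$ $\bP$-almost surely, as claimed. The main obstacle I anticipate is the bookkeeping in the second step: correctly setting up the decomposition over the common ancestor $w$ and the pair of its children, and getting the powers of $\mu$ to match so that the spinal identity $\bE\Et[\#\{v : \root \lrd v\}] = 1$ and the polynomial decay $\P(Y_n > 0) \asymp n^{-\beta}$ from \eqref{eqn:slack prob convergence} combine to produce exactly the factor $n_k^{-\beta}$; one also has to be slightly careful that $m_k$ is large enough (via the precise constant in \eqref{eqn:mk def}) that $n_k - m_k \sim n_k$ so the asymptotics \eqref{eqn:slack prob convergence} can be applied at level $n_k - m_k$, and that the $j$-sum, which effectively runs up to $m_k = \Theta(\log n_k)$, still converges uniformly.
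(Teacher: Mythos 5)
Your decomposition over the most recent common ancestor $w=u\wedge v$ and the factorisation of the two downward connection probabilities are exactly the right starting point (and match the paper's setup), but the plan breaks down at the moment you take a plain first moment over $\bT$. Carrying out your computation, the expected number of ordered pairs $(u,v)$ with $u\wedge v=w$ involves the number of ordered pairs of \emph{distinct children} of $w$, whose $\bP$-expectation is $\bE\ek{\abs{\bT_1}\rk{\abs{\bT_1}-1}}$. Under Assumption 1.1(b) the offspring law has infinite variance, so this quantity is $+\infty$, and hence $\bE\bigl[\sum_{u\neq v}\Pt\rk{\root\leftrightarrow(u,v)\lrd\bT_{n_k}}\bigr]=+\infty$ for every $k$. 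Your remark that a finite $(1+\delta)$-moment ``suffices to control the pair-of-children count'' is the gap: the pair count is a second-moment quantity, and no moment of order below $2$ controls it. Consequently the Markov/Borel--Cantelli step has nothing to work with in the stable case, which is the main case of interest here. (A secondary, harmless inaccuracy: even in the finite-variance case the $j$-sum is not geometric --- after the $\mu$-powers cancel, each level $j$ contributes the same order, so the sum is $\Theta(m_k)=\Theta(\log n_k)$ rather than $O(1)$; this only costs a logarithm.)

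The paper's proof avoids the infinite second moment by applying Markov's inequality to the $p$-th power of the sum with $p=\frac{\alpha-\eps}{2}<1$, rather than to the sum itself. Since $p<1$, the map $x\mapsto x^p$ is concave and subadditive, so the power can be pushed inside the sum over common ancestors via conditional Jensen; the resulting terms involve $\rk{\abs{\bT_m^{(u)}}}^{2p}$ and hence only the $2p$-th moment of $\overline{\bW}$, which is finite because $2p<\alpha$. This fractional-moment device is the essential extra idea your proposal is missing; everything else (the common-ancestor decomposition, the use of \eqref{eqn:slack prob convergence} at level $n_k-m_k$, and Borel--Cantelli along $n_k\sim k^A$) is as you describe. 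If you restrict to Assumption 1.1(a) (finite variance), your first-moment argument can be made to work, but it does not cover case (b).
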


\begin{lemma}[A concentration estimate]  \label{lem:variance}
$\bP$-almost surely,
\begin{equation}
   \lim_{k\to\infty} n_k^\beta\sum_{v\in\bT_{\tiny{m_k}}}\left[\Pt\rk{\root \leftrightarrow v\lrd \bT_{n_k}} - \mu^{-m_k} \P\rk{\root \leftrightarrow \bT_{n_k-m_k}} \right]= 0. 
\end{equation}
\end{lemma}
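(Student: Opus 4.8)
The goal is a quenched concentration estimate: the random sum $S_k := n_k^\beta\sum_{v\in\bT_{m_k}}\Pt(\root \leftrightarrow v\lrd \bT_{n_k})$ concentrates around its conditional mean given $\Fcal_{m_k}$. The first step is to identify this conditional mean. Writing $u \lrd \bT_{n_k}$ for a vertex $u$ at level $m_k$ to mean that the subtree rooted at $u$ percolates down to relative level $n_k - m_k$, the events $\{\root \leftrightarrow v\}$ and $\{v \lrd \bT_{n_k}\}$ are independent given $\Fcal_{m_k}$ (they involve disjoint edge sets), so $\Pt(\root \leftrightarrow v\lrd \bT_{n_k}) = \Pt(\root \leftrightarrow v)\,\Pt(v \lrd \bT_{n_k})$. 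Averaging over the tree: $\E[\Pt(v \lrd \bT_{n_k}) \mid \Fcal_{m_k}] = \P(\root \leftrightarrow \bT_{n_k - m_k})$ by the branching property of the annealed cluster, while $\E[\sum_{v \in \bT_{m_k}} \Pt(\root \leftrightarrow v) \mid \Fcal_{m_k}]$ — no, more carefully: I want to condition on $\Fcal_{m_k}$ (which fixes $\bT_{m_k}$ and all $\Pt(\root\leftrightarrow v)$ for $v \in \bT_{m_k}$), and then note that the subtrees below distinct $v$'s are i.i.d.\ Galton--Watson trees run with independent percolation. Hence
\[
\E\Big[S_k \,\Big|\, \Fcal_{m_k}\Big] = n_k^\beta\,\P(\root \leftrightarrow \bT_{n_k - m_k})\sum_{v \in \bT_{m_k}}\Pt(\root \leftrightarrow v).
\]
Now a separate, easier argument (using \eqref{eqn:slack prob convergence} plus the almost-sure convergence $\mu^{m_k}\Pt(\root\leftrightarrow \bT_{m_k}) \to$ something, or more simply that $\sum_{v\in\bT_{m_k}}\Pt(\root\leftrightarrow v) = \mu^{-m_k}|\bT_{m_k}|(1+o(1))$ via $\Pt(\root\leftrightarrow v) = \mu^{-m_k}$ exactly, since each edge on the unique path is open independently with probability $\mu^{-1}$) reduces the conditional mean to $n_k^\beta \mu^{-m_k}|\bT_{m_k}|\,\P(\root\leftrightarrow\bT_{n_k-m_k})$. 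Wait — $\Pt(\root\leftrightarrow v)$ for $v$ at level $m_k$ is deterministically $\mu^{-m_k}$ because there is a unique path, so actually $\sum_{v\in\bT_{m_k}}\Pt(\root\leftrightarrow v) = \mu^{-m_k}|\bT_{m_k}|$ exactly. Thus $\E[S_k \mid \Fcal_{m_k}] = n_k^\beta \mu^{-m_k}|\bT_{m_k}|\,\P(\root\leftrightarrow\bT_{n_k-m_k})$, and since $|\bT_{m_k}| = \mu^{m_k}\bW_{m_k} \to \mu^{m_k}\bW$ in the appropriate sense, and $n_k^\beta\P(\root\leftrightarrow\bT_{n_k-m_k}) \to \CC$ because $m_k = o(n_k)$, this converges to $\CC\bW$. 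But the statement of the lemma is phrased as $S_k - n_k^\beta\mu^{-m_k}|\bT_{m_k}|\cdot(\text{wait, no})$ — the lemma says $S_k - n_k^\beta\sum_{v\in\bT_{m_k}}\mu^{-m_k}\P(\root\leftrightarrow\bT_{n_k-m_k}) = S_k - n_k^\beta|\bT_{m_k}|\mu^{-m_k}\P(\root\leftrightarrow\bT_{n_k-m_k}) \to 0$, which is precisely $S_k - \E[S_k\mid\Fcal_{m_k}] \to 0$. Good: so the lemma is exactly a quenched law of large numbers for $S_k$ conditionally on $\Fcal_{m_k}$.

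\textbf{Main steps.} First I would establish the conditional-mean identity above rigorously, using that (i) $\Pt(\root\leftrightarrow v)=\mu^{-m_k}$ is deterministic, (ii) conditionally on $\Fcal_{m_k}$ the quantities $X_v := \Pt(v\lrd\bT_{n_k})$, $v\in\bT_{m_k}$, are independent with common mean $\P(\root\leftrightarrow\bT_{n_k-m_k})$, since they depend on disjoint, freshly-sampled subtrees with independent percolation. Then $S_k = n_k^\beta\mu^{-m_k}\sum_{v\in\bT_{m_k}}X_v$ is, conditionally on $\Fcal_{m_k}$, a rescaled sum of $|\bT_{m_k}|$ i.i.d.\ bounded random variables ($0 \le X_v \le 1$). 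The second step is a conditional variance bound:
\[
\Var\big(S_k \mid \Fcal_{m_k}\big) = n_k^{2\beta}\mu^{-2m_k}|\bT_{m_k}|\,\Var(X_v) \le n_k^{2\beta}\mu^{-2m_k}|\bT_{m_k}|\,\E[X_v] = n_k^{2\beta}\mu^{-2m_k}|\bT_{m_k}|\,\P(\root\leftrightarrow\bT_{n_k-m_k}),
\]
using $X_v \le 1$. Since $n_k^\beta\P(\root\leftrightarrow\bT_{n_k-m_k}) = \Theta(1)$ and $|\bT_{m_k}| = \Theta(\mu^{m_k})$ a.s., this is $\Theta(n_k^\beta\mu^{-m_k})$. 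The third step is to choose $m_k$ so that $n_k^\beta \mu^{-m_k}$ is summable (in $k$) — this is exactly where $m_k = \lfloor\frac{1+\e}{(\alpha-1)\log\mu}\log n_k\rfloor$ comes in: $\mu^{-m_k} \approx n_k^{-(1+\e)/(\alpha-1)} = n_k^{-(1+\e)\beta}$, so $n_k^\beta\mu^{-m_k} \approx n_k^{-\e\beta}$, and with $n_k \sim k^A$ this is $\approx k^{-A\e\beta}$; one checks $A\e\beta > 1$ with the given $A$ (indeed $A > A' = 2p^{-1}(\alpha-1)\e^{-1}$ is designed precisely to make the relevant exponents exceed $1$). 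Hence $\sum_k \Var(S_k - \E[S_k\mid\Fcal_{m_k}]\mid\Fcal_{m_k}) < \infty$ a.s. The final step is Borel--Cantelli / Chebyshev: for any $\delta>0$,
\[
\sum_k \P\big(|S_k - \E[S_k\mid\Fcal_{m_k}]| > \delta \,\big|\, \Fcal_{m_k}\big) \le \delta^{-2}\sum_k \Var(S_k\mid\Fcal_{m_k}) < \infty \quad \bP\text{-a.s.},
\]
so by the conditional Borel--Cantelli lemma $|S_k - \E[S_k\mid\Fcal_{m_k}]|\to 0$ $\bP$-a.s., which is the claim. (One must be slightly careful that the tree, the percolation subtrees, and the indices are all coupled on one probability space; the cleanest route is to work on the annealed space $\Omega\times\prod_v\Omega^{\bT}$ and note that conditionally on $\Fcal_{m_k}$ the subtree-percolation pairs below level $m_k$ are i.i.d., then integrate out.)

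\textbf{Main obstacle.} The delicate point is not the variance computation but making the conditional independence and the conditional Borel--Cantelli argument watertight: one needs that, conditionally on $\Fcal_{m_k}$, the family $\{X_v\}_{v\in\bT_{m_k}}$ is genuinely i.i.d.\ with the stated annealed mean, which requires carefully setting up the percolation on the whole of $\bT$ so that restricting to the subtree below $v$ and running the induced percolation reproduces exactly the annealed law $\P$ on a critical GW tree. There is also a bookkeeping subtlety because $m_k$ depends on $n_k$ hence grows with $k$, so the conditioning $\sigma$-algebra changes with $k$; the conditional Borel--Cantelli lemma still applies (the events $\{|S_k - \E[S_k\mid\Fcal_{m_k}]|>\delta\}$ need only be measurable, and the summability of conditional probabilities is what is required), but one should state it in the form: if $\sum_k \P(A_k\mid\Gcal_k) < \infty$ a.s.\ for $\sigma$-algebras $\Gcal_k$ with $A_k\in\sigma(\Gcal_k,\text{extra randomness})$, then $\P(A_k\text{ i.o.})=0$. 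Finally, to pass from $|\bT_{m_k}|\mu^{-m_k} = \bW_{m_k}$ to $\bW$ and from $n_k^\beta\P(\root\leftrightarrow\bT_{n_k-m_k})$ to $\CC$ one needs $m_k = o(n_k)$ (clear, since $m_k = O(\log n_k)$) and the a.s.\ convergence $\bW_{m_k}\to\bW$ from \eqref{eqn:W convergence} — but note the lemma as stated only asks for the \emph{difference} $S_k - \E[S_k\mid\Fcal_{m_k}]$ to vanish, so these last convergences are not even needed for this particular lemma, only for assembling \cref{thm:ConvergenceOfPartitionFunction} from \cref{lem:connection single} and \cref{lem:variance} afterwards.
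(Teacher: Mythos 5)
Your proposal is correct and follows essentially the same route as the paper: conditionally on $\Fcal_{m_k}$ the variables $\Pt(v\lrd\bT_{n_k})$ are i.i.d.\ with mean $\P(\root\leftrightarrow\bT_{n_k-m_k})$, the variance is bounded by the mean since these probabilities lie in $[0,1]$, and the choice of $m_k$ makes the Chebyshev bound summable along $(n_k)$ so Borel--Cantelli applies. The only cosmetic difference is that the paper integrates out $\Fcal_{m_k}$ via the tower property (using $\bE[\overline{\bW}]<\infty$) and then applies the ordinary first Borel--Cantelli lemma, which sidesteps the adaptedness caveat you raise about a conditional Borel--Cantelli statement; your own variance bound yields this unconditional summable bound immediately upon taking expectations.
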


\cref{thm:ConvergenceOfPartitionFunction} for a subsequence follows immediately from the above three lemmas, \eqref{eqn:W convergence} and \eqref{eqn:slack prob convergence}, since they imply that
\begin{align*}
   \lim_{k \to \infty} n_k^\beta\Pt\rk{Y_{n_k}>0} = \lim_{k \to \infty} n_k^\beta\sum_{v\in\bT_{\tiny{m_k}}}\!\mu^{-m_k}\P\rk{\zero \leftrightarrow \bT_{n_k-m_k}} &= \lim_{k \to \infty}\frac{\abs{\bT_{\tiny{m_k}}}}{\mu^{m_k}}n^\beta_k\P\rk{\zero \leftrightarrow \bT_{n_k-m_k}} = \CC\bW  .
\end{align*}

\begin{remark}
    \label{rmk:no two vertices connect}
    Define $\ell_n$ so that $\ell_{n_k} = m_k$ for all $k \geq 1$ and extend to all $n$ by setting $\ell_n = \ell_{\nk}$ when $n \in [\nk, n_{k+1})$. 
Let $A_n$ be the event that there exist at least two vertices at level $m_n$ that connect the root to level $n$, and let $B_n$ be the event that there exist at least two vertices at level $\ell_n$ that connect the root to level $n$. By \cref{thm:ConvergenceOfPartitionFunction} and \cref{lem:connection single}, $\Pt\left(A_{n_k} \text{ i.o.}\middle\vert {Y_{n_k} >0}\right)\to 0$, $\bP$-almost surely. This also implies that $\Pt\left({B_{n} \text{ i.o.}}\middle\vert{Y_n >0}\right)\to 0$, $\bP$-almost surely. This follows since if $n \in [n_k, n_{k+1})$ and there are distinct $u, v$ in generation $\ell_n$ connecting the root to level $n$ using a path of length $n$, then since $\ell_n = m_k \leq n_k \leq n$ we obtain two distinct vertices in level $m_{k}$ that connect to level $n_k$ by following these two paths up to level $m_k$ and onwards to level $n_k$. Therefore, if $B_n$ occurred infinitely often, then so would $A_{n_k}$. Moreover, $\frac{\Pt \left(Y_{n_k} >0\right)}{\Pt\left(Y_{n} >0\right)} \to 1$, so we can exchange the two conditioning events.

This also explains why the limiting properties of the conditioned critical percolation cluster do not depend on the tree. Indeed, the behaviour of the cluster at level $n$ is essentially independent of the behaviour of the cluster before level $\ell_n$, since there will only be one subtree from level $\ell_n$ that will survive, so as $n$ goes to infinity we successively lose the dependence on more and more of the earlier generations in the tree.
\end{remark}

\subsection{Proof of lemmas}

In this subsection we prove \cref{lem:connection single} and \cref{lem:variance}.

\begin{proof}[Proof of \cref{lem:connection single}]
    (Connection through a single node at lower level.)
    
Recall that $A>2p^{-1}\rk{\alpha-1}\e^{-1}$, $\e=\frac{\alpha-1}{2}$ and $p=\frac{\alpha-\e}{2}=\frac{\alpha+1}{4}$, and that ${n_k} \sim k^A$. Note that
    \begin{equation}\label{eqn:p eps relation}
        \frac{p}{1-p}=\frac{1+\e}{1-\e}\, .
    \end{equation}
    Recall also that
    \[
m_k = \lfloor  \frac{(1+\e)}{\rk{\alpha-1}\log\mu}\log n_k \rfloor.
\]

We want to show that, $\bP$-almost surely,
\begin{equation}
    \lim_{k\to\infty}n_k^\beta\sum_{\mycom{u,v\in \bT_{\tiny{m_k}}}{u\neq v}}\Pt\rk{\root \leftrightarrow (u,v)\lrd \bT_{n_k}}=0\, .
\end{equation}
We abbreviate $m=m_k$ and $n=n_k$ for the rest of the proof. 

Recall that $\Fcal_i=\sigma\rk{\bT_r\colon 0\le r\le i}$ is the sigma algebra generated by the first $i$ levels of the tree. For $u,v\in \bT_m$, we set $\ba_{u,v} =\Pt\rk{u\lrd \bT_{n}}\Pt\rk{v\lrd \bT_{n}}$. By Markov's inequality, we have (taking $p$ as defined just above \eqref{eqn:p eps relation}) that
\begin{equation}\label{eqn:connection prob Markov}
    \begin{split}
        \bP\rk{\sum_{\mycom{u,v\in \bT_m}{u\neq v}}\Pt\rk{\root \leftrightarrow (u,v)\lrd \bT_{n}}>x}&\le \bE\ek{\rk{\sum_{\mycom{u,v\in \bT_m}{u\neq v}}\Pt\rk{\root \leftrightarrow (u,v)\lrd \bT_{n}}}^p}x^{-p}\\
        &\le \bE\ek{\bE\ek{\rk{\sum_{\mycom{u,v\in \bT_m}{u\neq v}}\Pt\rk{\root \leftrightarrow (u,v)\lrd \bT_{n}}}^p\Bigg|\Fcal_m}}x^{-p}\\
        &\le \bE\ek{\bE\ek{\rk{\sum_{\mycom{u,v\in \bT_m}{u\neq v}}\frac{\ba_{u,v} }{\mu^{2m-\abs{u\wedge v}}}}^p\Bigg|\Fcal_m}}x^{-p}\, ,
    \end{split}
\end{equation}
where $u \wedge v$ is the most recent common ancestor (i.e. furthest from the root) of $u$ and $v$, and $\abs{u\wedge v}$ is the distance of $u \wedge v$ from the root of $\bT$.

Given $u\in\bT_m$, let $\root=u_0,\ldots,u_m=u$ denote the ancestors of $u$ ordered by distance from the root. Let $\preceq $ denote the partial ordering induced by generations and let
\begin{equation}
    \bT_m\hk{u_i}=\gk{v\in \bT_m\colon u_i \preceq v}\, , 
\end{equation}
or in other words the set of vertices in $\bT_m$ that descend from $u_i$. Applying Markov's inequality, the tower property, summing over the most recent common ancestors and then reordering the sum, we deduce that
\begin{equation}
    \begin{split}
        {\bE\ek{\rk{\sum_{\mycom{u,v\in \bT_m}{u\neq v}}\frac{\ba_{u,v} }{\mu^{2m-\abs{u\wedge v}}}}^p\Bigg|\Fcal_m}}
        &\le {\bE\ek{\rk{\sum_{i=1}^m\sum_{u\in\bT_i} \sum_{\substack{a,b \in \bT_m\hk{u} \\ a \neq b}}\frac{\ba_{a,b} }{\mu^{2m-i}}}^p\Bigg|\Fcal_m}}\\
        &\le {\sum_{i=1}^m\sum_{u\in\bT_i}\bE\ek{\rk{\sum_{\substack{a,b \in \bT_m\hk{u} \\ a \neq b}}\frac{\ba_{a,b} }{\mu^{2m-i}}}\Bigg|\Fcal_m}^p}.
\end{split}
\end{equation}
(Note that $p<1$ for the final line.) Since each term of the form $\ba_{a,b} $ is independent of $\Fcal_m$ and moreover its conditional law does not depend on the choice of $a$ and $b$, this term can be factorised outside the internal expectation. 

Write $\overline{\bW}=\sup_n\bW_n$. Applying the tower property once more we deduce that there exists $C<\infty$ such that the expectation of this expression is upper bounded by (now letting $a,b$ denote arbitrary distinct vertices in $\bT_m$)
\begin{equation}
    \begin{split}
\bE\ek{\sum_{i=1}^m\sum_{u\in\bT_i}\rk{\frac{\rk{\bT_m\hk{u}}^2}{\mu^{2m-i}}}^p}\bE\ek{\ba_{a,b} }^p     
        &=\bE\ek{\sum_{i=1}^m\bE\ek{\sum_{u\in\bT_i}\mu^{-ip}\rk{\frac{\rk{\bT_m\hk{u}}^2}{\mu^{2m-2i}}}^p\Bigg|\Fcal_i}}\bE\ek{\ba_{a,b} }^p\\
        &\le \bE\ek{\sum_{i=1}^m\abs{\bT_i}\mu^{-ip}}\bE\ek{\overline{\bW}^{2p}}\bE\ek{\ba_{a,b} }^p\\
        &\le C \mu^{m(1-p)}\bE\ek{\overline{\bW}^{2p}}\bE\ek{\ba_{a,b} }^p\, .
    \end{split}
\end{equation}
Since $2p<\alpha$, we have that $\overline{\bW}^{2p}$ is bounded in $\mathrm{L}^1\rk{\bP}$ and hence, applying \eqref{eqn:slack prob convergence} to bound $\bE\ek{\ba_{a,b} }$ and substituting back into \eqref{eqn:connection prob Markov}, we obtain that there exists $C< \infty$ such that
\begin{equation}\label{eqn:BC prob single connection}
    \bP\rk{\sum_{\mycom{u,v\in \bT_m}{u\neq v}}\Pt\rk{\root \leftrightarrow (u,v)\lrd \bT_{n}}>x}\le C \mu^{m(1-p)}\rk{n-m}^{-2p/(\alpha-1)}x^{-p}\, .
\end{equation}
To prove the proposition, we choose $x = n^{-\beta}(\log n)^{-1}$. Recalling that $\mu^{m(1-p)} = n^{\frac{p(1-\epsilon)}{\alpha-1}}$, it follows that the right-hand side of \eqref{eqn:BC prob single connection} is upper bounded by $n^{-\frac{\eps p}{\alpha-1}}(\log n)^p$. By our choice of $A$, this bound is summable along the sequence $(n_k)_{k \geq 1}$ and hence the claim follows by Borel--Cantelli.
\end{proof}

We now turn to the proof of \cref{lem:variance}, which consists of a simple second moment estimate.

    \begin{proof}[Proof of \cref{lem:variance}] (A concentration estimate.)
Again write $n=n_k$ and $m=m_k$. Note that given $\Fcal_m$, we have that the collection of random variables
\begin{equation}
    \rk{\Pt\rk{v\lrd \bT_n}-\P\rk{\zero \leftrightarrow \bT_{n-m}}}_{v\in \bT_m}\, ,
\end{equation}
are i.i.d. with mean zero under $\bP$. It follows that
\begin{align}\label{eqn:first variance bound}
\begin{split}
    &\Var{\sum_{v\in\bT_m}\ek{\Pt\rk{0\leftrightarrow v\lrd \bT_n}-\mu^{-m}\P\rk{\zero \leftrightarrow \bT_{n-m}}}\Big|\Fcal_m}\\
&\qquad =\mu^{-2m}\sum_{v\in\bT_m}\Var{\Pt\rk{v\lrd \bT_n}} \leq \mu^{-2m}\sum_{v\in\bT_m}\bE \ek{\Pt\rk{v\lrd \bT_n}}\, .
\end{split}
\end{align}
Here the final inequality follows since $\Pt\rk{v\lrd \bT_n}^2 \leq \Pt\rk{v\lrd \bT_n}$. Note that, by \eqref{eqn:slack prob convergence}, the final line in \eqref{eqn:first variance bound} is bounded by
\begin{equation}\label{eqn:second variance bound}
    \Ocal\rk{\abs{\bT_m}(n-m)^{-\beta}\mu^{-2m}}\, .
\end{equation}
It therefore follows from Chebyshev's inequality that there exists $C<\infty$ such that for any $x>0$,
\begin{equation}\label{eqn:Cheb BC 1}
    \bP\rk{\left|\sum_{v\in\bT_m}\ek{\Pt\rk{0\leftrightarrow v\leftrightarrow \bT_n}-\mu^{-m}\P\rk{\zero \leftrightarrow \bT_{n-m}}} \right| >x\Big|\Fcal_m}\le C\frac{\abs{\bT_m}n^{-\beta}}{x^2\mu^{2m}}\, .
\end{equation}
Setting $x=n^{-\beta}(\log n)^{-1}$, it follows from the tower property and the fact that $\bE{[\overline{\bW}]}<\infty$ that there exists $C'<\infty$ such that
\begin{equation}\label{eqn:Cheb BC 2}
    \begin{split}
\bP&\rk{n^\beta\abs{\sum_{v\in\bT_m}\ek{\Pt\rk{0\leftrightarrow v\leftrightarrow \bT_n}-\mu^{-m}\P\rk{\zero \leftrightarrow \bT_{n-m}}}}>(\log n)^{-1}}\\
        &\le\bE\ek{C\frac{\abs{\bT_m}n^{-\beta}}{x^2\mu^{2m}}} \leq \frac{C'n^{-\beta}}{x^2\mu^{m}}=C'n^{-\frac{\e}{\alpha-1}}(\log n)^2\, ,
    \end{split}
\end{equation}
which is summable along the subsequence $\rk{n_k}_{k \geq 1}$ by our choice of $A$. Hence
\begin{equation}
      \bP\rk{n_k^\beta\abs{\sum_{v\in\bT_{\tiny{m_k}}}\ek{\Pt\rk{0\leftrightarrow v\leftrightarrow \bT_{n_k}}-\mu^{-m_k}\P\rk{\zero \leftrightarrow \bT_{n_k-m_k}}}}>(\log n)^{-1}\text{ infinitely often }}=0\, ,
\end{equation}
thus proving the lemma.
    \end{proof}

\section{Yaglom's limit: proof of \cref{thm:ConvergenceOfLaplaceTransform}}\label{sctn:Laplace transform proof}

In this section we examine the limit of the distribution of $n^{-\beta} Y_n$ conditioned on the event $\{Y_n>0\}$. We do this through a series of intermediate results, all concerning the Laplace transform. An advantage of our method is that we do not need to make any moment assumption on our offspring distribution. This contrasts to the proof of Michelen who proved a similar result using the method of moments, and therefore needed to assume that all moments were finite. We first state a preparatory lemma.

For $\theta\ge 0, n \geq 1$ let $\phi_n(\theta)$ be the critical (annealed) Laplace transform
\begin{equation}
    \phi_n(\theta)=\E\ek{\ex^{-\theta n^{-\beta}Y_n}|Y_n>0}\, .
\end{equation}
Recall from \eqref{eqn:slack Laplace convergence} that $\phi_n(\theta)$ converges to $\phi(\theta)$ as $n \to \infty$ for all $\theta \geq 0$, where
\[
\phi(\theta) = 1-\CC^{-1}\theta (1+\rk{\CC^{-1}\theta}^{\alpha-1})^{-\beta}.
\]

As in the proof of \cref{thm:ConvergenceOfPartitionFunction}, our strategy to prove \cref{thm:ConvergenceOfLaplaceTransform} is to start by proving almost sure convergence along a subsequence $(n_k)_{k \geq 1}$, and then extend this to full convergence by showing that $Y_n$ is very likely to be close to $Y_{n_k}$ if $n \in [n_k, n_{k+1}]$. We start with the subsequential convergence.

\begin{proposition}\label{PropositionConvergenceFixedTheta}
    Recall the definition of the subsequences $(n_k)_{k \geq 1}$ and $(m_k)_{k \geq 1}$ from \cref{lem:subsequence suff} and \eqref{eqn:mk def}. For every $\theta\ge 0$, we have that, $\bP$-almost surely
    \begin{equation}
\phit{n_k}{\theta} := \Et\ek{\ex^{-\theta n_k^{-\beta}Y_{n_k}}|Y_{n_k}>0} \to \phi(\theta)\, 
    \end{equation}
as $k \to \infty$.
\end{proposition}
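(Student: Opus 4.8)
\emph{Proof proposal.} The plan is to run the scheme of the proof of \cref{thm:ConvergenceOfPartitionFunction}, replacing connection probabilities by Laplace transforms and using \cref{lem:connection single} again as the key estimate. Write $h_n(\theta):=\Et\ek{1-\ex^{-\theta n^{-\beta}Y_n}}$; since $1-\ex^{-\theta n^{-\beta}Y_n}$ vanishes on $\{Y_n=0\}$,
\[
1-\phit{n}{\theta}=\frac{h_n(\theta)}{\Pt(Y_n>0)}=\frac{n^\beta h_n(\theta)}{n^\beta\Pt(Y_n>0)},
\]
and by \cref{thm:ConvergenceOfPartitionFunction} the denominator converges $\bP$-a.s.\ to $\CC\bW\in(0,\infty)$ (here $\bW>0$ $\bP$-a.s., since \cref{assumption} forces $\E[\abs{\bT_1}\log\abs{\bT_1}]<\infty$, so the Kesten--Stigum theorem applies). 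It therefore suffices to show that, for each fixed $\theta\ge 0$,
\[
n_k^\beta\,h_{n_k}(\theta)\;\xrightarrow[k\to\infty]{}\;\CC\bW\,(1-\phi(\theta))\qquad\bP\text{-a.s.}
\]

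Next I would split at level $m_k$ from \eqref{eqn:mk def}. For $v\in\bT_{m_k}$ let $Y_{n,v}$ denote the number of vertices of $\bT_n$ descending from $v$ that are joined to $v$ by an open path, so that $Y_n=\sum_{v\in\bT_{m_k}}\1\{\root\leftrightarrow v\}\,Y_{n,v}$. With $x_v:=\theta n^{-\beta}\1\{\root\leftrightarrow v\}\,Y_{n,v}\ge 0$ one has $\prod_v\ex^{-x_v}=\ex^{-\theta n^{-\beta}Y_n}$, so the Weierstrass product inequality and the second Bonferroni inequality give
\[
\sum_v\bigl(1-\ex^{-x_v}\bigr)-\sum_{u\neq v}\bigl(1-\ex^{-x_u}\bigr)\bigl(1-\ex^{-x_v}\bigr)\;\le\;1-\ex^{-\theta n^{-\beta}Y_n}\;\le\;\sum_v\bigl(1-\ex^{-x_v}\bigr).
\]
Since $1-\ex^{-x_v}\le\1\{x_v>0\}=\1\{\root\leftrightarrow v\lrd\bT_n\}$, taking $\Et$ shows that $h_n(\theta)$ differs from $\sum_{v\in\bT_{m_k}}\Et[1-\ex^{-x_v}]$ by at most $\sum_{u\neq v}\Pt(\root\leftrightarrow(u,v)\lrd\bT_n)$, which is $o(n_k^{-\beta})$ $\bP$-a.s.\ along $(n_k)$ by \cref{lem:connection single}. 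So it remains to analyse $n_k^\beta\sum_{v\in\bT_{m_k}}\Et[1-\ex^{-x_v}]$.

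For this I would condition on whether $\root\leftrightarrow v$: the path to $v$ consists of $m_k$ edges, open with probability $\mu^{-m_k}$ and independently of $Y_{n,v}$, and on that event $Y_{n,v}$ has, under $\Pt$, the law of the generation-$(n-m_k)$ cluster size in the subtree of $\bT$ rooted at $v$. Hence $\Et[1-\ex^{-x_v}]=\mu^{-m_k}H^{(v)}$, with $H^{(v)}:=\Et[1-\ex^{-\theta n^{-\beta}Y_{n,v}}]\in[0,1]$ a measurable function of that subtree; in particular, conditionally on $\Fcal_{m_k}$ the variables $(H^{(v)})_{v\in\bT_{m_k}}$ are i.i.d.\ with mean $\E[H]:=\E[1-\ex^{-\theta n^{-\beta}Y_{n-m_k}}]$. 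Writing $\bW_{m_k}=\abs{\bT_{m_k}}\mu^{-m_k}$,
\[
n_k^\beta\sum_{v\in\bT_{m_k}}\Et[1-\ex^{-x_v}]=\bW_{m_k}\,n_k^\beta\,\E[H]\;+\;n_k^\beta\mu^{-m_k}\Bigl(\sum_{v\in\bT_{m_k}}H^{(v)}-\abs{\bT_{m_k}}\,\E[H]\Bigr).
\]
Because $m_k=o(n_k)$ and \eqref{eqn:slack prob convergence}--\eqref{eqn:slack Laplace convergence} hold locally uniformly in $\theta$ (pointwise limits of convex functions to a continuous limit), one gets $n_k^\beta\E[H]\to\CC(1-\phi(\theta))$, so the first summand tends $\bP$-a.s.\ to $\CC\bW(1-\phi(\theta))$. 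For the second summand I would repeat the second-moment computation of \cref{lem:variance}: conditionally on $\Fcal_{m_k}$,
\[
\Var{\mu^{-m_k}\sum_{v\in\bT_{m_k}}H^{(v)}\,\Big|\,\Fcal_{m_k}}=\mu^{-2m_k}\abs{\bT_{m_k}}\,\Var{H^{(v)}}\le\mu^{-2m_k}\abs{\bT_{m_k}}\,\E[H]=O\bigl(\mu^{-m_k}\bW_{m_k}\,n_k^{-\beta}\bigr),
\]
using $(H^{(v)})^2\le H^{(v)}\le 1$ and $\E[H]=O(n_k^{-\beta})$. A conditional Chebyshev bound at threshold $n_k^{-\beta}(\log n_k)^{-1}$, followed by integrating $\Fcal_{m_k}$ out (using $\E[\bW_{m_k}]=1$) and the bound $\mu^{-m_k}=O(n_k^{-(1+\e)\beta})$, then shows that the second summand exceeds $(\log n_k)^{-1}$ in absolute value with $\bP$-probability $O(n_k^{-\e\beta}(\log n_k)^2)$, which is summable along $(n_k)$ exactly as in the proof of \cref{lem:variance}. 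By Borel--Cantelli the second summand tends to $0$ $\bP$-a.s., and assembling the three steps yields $n_k^\beta h_{n_k}(\theta)\to\CC\bW(1-\phi(\theta))$, hence $\phit{n_k}{\theta}\to\phi(\theta)$ $\bP$-a.s.

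The main obstacle is the concentration step: unlike the connection probabilities in \cref{thm:ConvergenceOfPartitionFunction}, the quenched Laplace transform is not monotone in $n$, so one genuinely needs a summable quantitative bound on the fluctuations of $\mu^{-m_k}\sum_v H^{(v)}$, and this is what pins down the scaling $m_k\asymp\log n_k$ in \eqref{eqn:mk def} (the gain $\mu^{-m_k}\asymp n_k^{-(1+\e)\beta}$ must beat the $n_k^\beta$ normalisation while $\abs{\bT_{m_k}}\asymp\mu^{m_k}$ stays polynomially large, and both must be compatible with $n_k\sim k^A$). A minor technical point is the local uniformity of \eqref{eqn:slack Laplace convergence}, which is needed to absorb the discrepancy between the normalisations $n_k^{-\beta}$ and $(n_k-m_k)^{-\beta}$ when computing the limit of $n_k^\beta\E[H]$.
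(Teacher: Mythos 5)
Your proposal is correct and follows essentially the same route as the paper's proof: split at level $m_k$, discard the contribution of configurations with two distinct connecting vertices at level $m_k$ via \cref{lem:connection single}, concentrate the sum of per-vertex Laplace functionals around its annealed mean by a conditional second-moment bound with Chebyshev and Borel--Cantelli (exactly mirroring \cref{lem:variance}), and normalise using \cref{thm:ConvergenceOfPartitionFunction}. The only cosmetic difference is that you work with $1-\phit{n}{\theta}$ and justify the decomposition via the Weierstrass/Bonferroni product inequality, whereas the paper phrases the same reduction as inclusion--exclusion on the event of a unique connecting vertex; the estimates are identical.
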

    \begin{proof}
    By the monotonicity of the Laplace transform in $\theta\ge 0$, we get from \cref{thm:ConvergenceOfLaplaceTransform} and \eqref{eqn:BC prob single connection} that
    \begin{equation}
        \bP\rk{\exists \theta\ge 0\colon \sum_{u\neq v\in \bT_{\tiny{m_k}}}\Et\ek{\ex^{-\theta n_k^{-\beta}Y_{n_k}} \mathbbm{1}\{0\leftrightarrow (u,v)\lrd \bT_{n_k}\}|Y_{n_k}>0}>\frac{1}{\log(n_k)} \textnormal{ i.o. }}=0\, .
    \end{equation}
    Indeed, $\theta=0$ is the extremal case here, in which case we are reduced to \eqref{eqn:BC prob single connection}. By the inclusion-exclusion principle, this means that $\bP$-almost surely, we can write (henceforth writing $m$ and $n$ in place of $m_k$ and $n_k$ respectively)
    \begin{equation}\label{eqn:phi as average}
        \phit{n}{\theta}=\sum_{v\in \bT_m}\Et\ek{\ex^{-\theta n^{-\beta}Y^{(v)}_{n}}\mathbbm{1}\{0\leftrightarrow v \lrd \bT_{n}\} \middle| Y_{n}>0}+o(1)\, ,
    \end{equation}
    where $Y^{(v)}_{n}$ denotes elements in the root cluster at level $n$ that are also descendants of $v$.
    
    We now examine the unconditional Laplace transform:
    \begin{equation}
        \sum_{v\in \bT_m}\Et\ek{\ex^{-\theta n^{-\beta}Y^{(v)}_{n}}\mathbbm{1}\{0\leftrightarrow v \lrd \bT_{n}\}}\, .
    \end{equation}
    Note that given $\Fcal_m$, we have for any $v \in \bT_m$ that the collection of random variables
    \begin{equation}
        \rk{\Et\ek{\ex^{-\theta n^{-\beta}Y^{(v)}_{n}} \mathbbm{1} \{v\lrd \bT_{n}\}}-\E\ek{\ex^{-\theta n^{-\beta}Y_{n-m}} \mathbbm{1}\{Y_{n-m}>0\}}}_{v\in \bT_m}\, 
    \end{equation}
    are i.i.d. with mean zero under $\bP$. Hence, as in \eqref{eqn:first variance bound} and \eqref{eqn:second variance bound}, we have that
    \begin{multline}
        \Var{\sum_{v\in \bT_m}\ek{\Et\ek{\ex^{-\theta n^{-\beta}Y^{(v)}_{n}}\mathbbm{1}\{0\leftrightarrow v\lrd \bT_{n}\}}-\mu^{-m}\E\ek{\ex^{-\theta n^{-\beta}Y_{n-m}}\mathbbm{1}\{Y_{n-m}>0\}}}\Big|\Fcal_m}\\
        =\Ocal\rk{\abs{\bT_m}n^{-\beta}\mu^{-2m}}\, .
    \end{multline}
    Thus we have by Chebyshev's inequality and the first Borel--Cantelli lemma (exactly as in \eqref{eqn:Cheb BC 1} and \eqref{eqn:Cheb BC 2}) that eventually $\bP$-almost surely,
    \begin{multline}\label{eqn:Chebyshev io Laplace}
        \!n_k^\beta\abs{\sum_{v\in \bT_{\tiny{m_k}}}\!\ek{\Et\!\ek{\ex^{-\theta n_k^{-\beta}Y^{(v)}_{n_k}} \mathbbm{1}\{0\!\leftrightarrow\! v\lrd \bT_{n_k}\}}\!-\!\mu^{-m}\E\!\ek{\ex^{-\theta n_k^{-\beta}Y_{n_k-{m_k}}}\mathbbm{1}\{Y_{n_k-{m_k}}>0\}}}}\!<\!(\log n_k)^{-1}.
    \end{multline}
    To conclude, note that it follows by definition, continuity of $\phi$ and \eqref{eqn:slack prob convergence} that
    \begin{align}
    \begin{split}
        n_k^\beta\sum_{v\in \bT_m}\mu^{-m}\E\ek{\ex^{-\theta n_k^{-\beta}Y_{n_k-{m_k}}}\mathbbm{1}\{Y_{n_k-{m_k}}>0\}}&=n_k^\beta \bW_m\E\ek{\ex^{-\theta n_k^{-\beta}Y_{n_k-{m_k}}}\mathbbm{1}\{Y_{n_k-{m_k}}>0\}}\, \\
        &=\bW_m \CC \phi(\theta)\rk{1+o(1)}\, ,
    \end{split}
    \end{align}
    and hence combining with \eqref{eqn:phi as average}, \eqref{eqn:Chebyshev io Laplace} and \cref{thm:ConvergenceOfPartitionFunction} gives
\begin{align*}
   \phit{n_k}{\theta} &= \frac{\bW_{m_k} \CC}{n_k^\beta \Pt\rk{Y_{n_k}>0}} \phi(\theta)\rk{1+o(1)} \to \phi (\theta).\qedhere
\end{align*}
\end{proof}

Next, we verify that \cref{PropositionConvergenceFixedTheta} implies the almost sure convergence in law of $n_k^{-\beta}Y_{n_k}$ conditioned on survival.
\begin{cor}\label{CorollaryConvergenceCondtionedSubsequence}
    For $\bP$-almost every tree $\bT$, we have that $n_k^{-\beta}Y_{n_k}$ conditioned to survive up to time $n_k$ converges in distribution as $k \to \infty$ to a law with Laplace transform $\phi$ defined in \eqref{eqn:slack Laplace convergence}.
\end{cor}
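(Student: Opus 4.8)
The plan is to upgrade the fixed-$\theta$ convergence of \cref{PropositionConvergenceFixedTheta} to convergence in distribution by the classical route: exchange the order of the quantifiers ``for all $\theta$'' and ``$\bP$-almost surely'' using monotonicity and a countable dense set of parameters, and then invoke the continuity theorem for Laplace transforms.

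First I would fix a countable dense subset $D\subseteq[0,\infty)$, for instance $D=\Q\cap[0,\infty)$. By \cref{PropositionConvergenceFixedTheta}, for each $\theta\in D$ there is a $\bP$-null set outside of which $\phit{n_k}{\theta}\to\phi(\theta)$; intersecting these countably many exceptional events yields a single event $\Omega_0$ with $\bP(\Omega_0)=1$ on which $\phit{n_k}{\theta}\to\phi(\theta)$ simultaneously for every $\theta\in D$.

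Next I would work on $\Omega_0$ and extend the convergence to all $\theta\ge 0$. For each $k$ the map $\theta\mapsto\phit{n_k}{\theta}=\Et\ek{\ex^{-\theta n_k^{-\beta}Y_{n_k}}\mathbbm{1}\{Y_{n_k}>0\}}/\Pt(Y_{n_k}>0)$ is non-increasing in $\theta$, and so is the limit $\phi$, which is moreover continuous on $[0,\infty)$ with $\phi(0)=1$. Given an arbitrary $\theta\ge 0$ and $\eps>0$, choose $\theta^-,\theta^+\in D$ with $\theta^-\le\theta\le\theta^+$ and $\phi(\theta^-)-\phi(\theta^+)<\eps$, which is possible by density of $D$ and continuity of $\phi$. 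Monotonicity gives $\phit{n_k}{\theta^+}\le\phit{n_k}{\theta}\le\phit{n_k}{\theta^-}$ for every $k$, so letting $k\to\infty$ sandwiches both $\liminf_k\phit{n_k}{\theta}$ and $\limsup_k\phit{n_k}{\theta}$ between $\phi(\theta^+)$ and $\phi(\theta^-)$; since these differ by at most $\eps$ and $\eps$ is arbitrary, $\phit{n_k}{\theta}\to\phi(\theta)$ for all $\theta\ge 0$ on $\Omega_0$.

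Finally, since $\phi$ is the Laplace transform of the probability law of $Y$ (the Yaglom/Slack limiting law; in particular $\phi(\theta)\to 1$ as $\theta\downarrow 0$, so no mass escapes to infinity), the continuity theorem for Laplace transforms of probability measures on $[0,\infty)$ applies: on $\Omega_0$, the law of $n_k^{-\beta}Y_{n_k}$ conditioned on $\{Y_{n_k}>0\}$ converges weakly to the law of $Y$ as $k\to\infty$. This is exactly the assertion of the corollary. The only step requiring any care is the quantifier exchange, and it presents no real obstacle: all the probabilistic substance is already contained in \cref{PropositionConvergenceFixedTheta}, and the passage from a dense set of $\theta$'s to all $\theta$'s is handled purely by monotonicity of Laplace transforms together with continuity of $\phi$.
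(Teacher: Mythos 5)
Your proposal is correct and follows essentially the same route as the paper: intersecting the null sets over rational $\theta$, extending to all $\theta\ge 0$ via monotonicity of $\theta\mapsto\phit{n_k}{\theta}$ and continuity of $\phi$, and concluding with the continuity theorem for Laplace transforms. You merely spell out the sandwich argument and the tightness-at-zero check ($\phi(0^+)=1$) more explicitly than the paper does.
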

\begin{proof}
    By Proposition \ref{PropositionConvergenceFixedTheta}, we have that
    \begin{equation}
        \bP\rk{\forall \theta\in [0,\infty)\cap\Q\colon \, \lim_{k\to\infty}  \phit{n_k}{\theta}=\phi(\theta)}=1\, .
    \end{equation}
    It just remains to remove the restriction $\theta\in\Q$. To this end, note that for every $\bT$, the function $\theta\mapsto\phit{n_k}{\theta}$ is monotone and that furthermore the limiting function $\phi$ is continuous. Hence, this immediately implies that
    \begin{equation}
        \bP\rk{\forall \theta\in [0,\infty)\colon \, \lim_{k\to\infty}  \phit{n_k}{\theta}=\phi(\theta)}=1\, ,
    \end{equation}
    and the result follows.
\end{proof}
It remains now to prove that the subsequential convergence of \cref{CorollaryConvergenceCondtionedSubsequence} can be lifted to the entire sequence $\rk{n^{-\beta}Y_n}_{n \geq 1}$. Our strategy is to work via a proof by contradiction: if $\abs{\phit{n}{\theta}-\phi(\theta)}$ was large but $\abs{\phit{n}{\theta}-\phit{n_k}{\theta}}$ was small, then $\abs{\phit{n_k}{\theta}-\phi(\theta)}$ would also be large, which would contradict Corollary \ref{CorollaryConvergenceCondtionedSubsequence}. Hence if we can show that $\abs{\phit{n}{\theta}-\phit{n_k}{\theta}}$ is consistently small, the result will follow.

We start in \cref{LemmaLink} by defining an event that indeed implies that $\abs{\phit{n}{\theta}-\phit{n_k}{\theta}}$ is small. Afterwards, we will explain how lower bounding the probability of this event directly implies \cref{thm:ConvergenceOfLaplaceTransform} and in particular in \cref{lem:lift to whole sequence} we will give a formal criterion of this ilk that does indeed imply the theorem. Then we just need to verify that the criterion is satisfied: this is the subject of \cref{prop:verify condition}.

\begin{lemma}[Upper bounding $|\phit{n}{\theta}-\phit{n_k}{\theta}|$]\label{LemmaLink}
    Fix some $\delta \in (0, \frac{\beta}{2})$. $\bP$-almost surely, we have for all $\theta>0$ and $\eps>0$ that there exists $K=K_{\delta, \epsilon, \theta, \bT} < \infty$ such that for all $k\ge K$ and all $n_{k-1}< n\le n_k$,
    \begin{equation}\label{eqn:event containment}
        \gk{\abs{\phit{n}{\theta}-\phit{n_k}{\theta}}<\e}\supset \gk{\Pt\rk{\abs{Y_n-\Ynk}>Y_n^{1-\delta}\vee n^{\beta-\delta} \Big| Y_n>0}<\e/4}\, .
    \end{equation}
\end{lemma}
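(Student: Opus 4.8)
The plan is to bound $\babs{\phit{n}{\theta}-\phit{n_k}{\theta}}$ directly, working throughout on the $\bP$-almost sure event on which $\bW\in(0,\infty)$ and the conclusion of \cref{thm:ConvergenceOfPartitionFunction} holds, so that in particular $n^\beta\Pt(Y_n>0)\to\CC\bW\in(0,\infty)$ along the whole sequence. Write $E_n\defeq\gk{\babs{Y_n-\Ynk}\le Y_n^{1-\delta}\vee n^{\beta-\delta}}$, so the event on the right of \eqref{eqn:event containment} is $\gk{\Pt(E_n^c\mid Y_n>0)<\e/4}$; fix such an $n$. Since $\gk{\Ynk>0}\subseteq\gk{Y_n>0}$, the triangle inequality gives
\begin{equation*}
\babs{\phit{n}{\theta}-\phit{n_k}{\theta}}\le\underbrace{\Et\ek{\babs{\ex^{-\theta n^{-\beta}Y_n}-\ex^{-\theta n_k^{-\beta}\Ynk}}\mid Y_n>0}}_{(\mathrm I)}+\underbrace{\babs{\Et\ek{\ex^{-\theta n_k^{-\beta}\Ynk}\mid Y_n>0}-\phit{n_k}{\theta}}}_{(\mathrm{II})}\, .
\end{equation*}
I will show that $(\mathrm{II})$ and the two pieces of $(\mathrm I)$ (over $E_n$ and over $E_n^c$) are each below $\e/4$ once $k$ is large, uniformly over $n\in(n_{k-1},n_k]$.

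For $(\mathrm{II})$, observe that on $\gk{Y_n>0}\setminus\gk{\Ynk>0}$ we have $\Ynk=0$, so conditioning first on whether $\Ynk>0$ and writing $r\defeq\Pt(\Ynk>0)/\Pt(Y_n>0)\in(0,1]$ gives
\begin{equation*}
\Et\ek{\ex^{-\theta n_k^{-\beta}\Ynk}\mid Y_n>0}=r\,\phit{n_k}{\theta}+(1-r)\, ,\qquad\text{so}\qquad(\mathrm{II})=(1-r)\bigl(1-\phit{n_k}{\theta}\bigr)\le 1-r\, .
\end{equation*}
Since $n_{k+1}/n_k\to1$, we have $(n/n_k)^\beta\to1$ uniformly over $n\in(n_{k-1},n_k]$, while $n^\beta\Pt(Y_n>0)$ and $n_k^\beta\Pt(\Ynk>0)$ both tend to $\CC\bW\in(0,\infty)$; hence $r=(n/n_k)^\beta\bigl(n_k^\beta\Pt(\Ynk>0)\bigr)\bigl(n^\beta\Pt(Y_n>0)\bigr)^{-1}\to1$ uniformly over the window, giving $(\mathrm{II})<\e/4$ for $k$ large.

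For $(\mathrm I)$, use that $x\mapsto\ex^{-x}$ is $1$-Lipschitz on $[0,\infty)$ and split the expectation over $E_n$ and $E_n^c$. On $E_n^c$ the integrand is $\le1$, so that piece is $\le\Pt(E_n^c\mid Y_n>0)<\e/4$. On $E_n$ the integrand is $\le\theta\babs{n^{-\beta}Y_n-n_k^{-\beta}\Ynk}\le\theta\bigl(n^{-\beta}\babs{Y_n-\Ynk}+(n^{-\beta}-n_k^{-\beta})\Ynk\bigr)$. Set $W\defeq n^{-\beta}Y_n$ and $\eta_k\defeq\sup_{n_{k-1}<n\le n_k}(1-(n/n_k)^\beta)\to0$. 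Using $Y_n^{1-\delta}\vee n^{\beta-\delta}\le Y_n^{1-\delta}+n^{\beta-\delta}$ together with the defining inequality of $E_n$ (and, when $\delta>1$, that $Y_n\ge1$ on $\gk{Y_n>0}$), one checks that on $E_n$
\begin{equation*}
n^{-\beta}\babs{Y_n-\Ynk}\le n^{-\beta}+n^{-\beta\delta}W^{1-\delta}+n^{-\delta}\, ,\qquad (n^{-\beta}-n_k^{-\beta})\Ynk\le\eta_k\bigl(W+n^{-\beta}+n^{-\beta\delta}W^{1-\delta}+n^{-\delta}\bigr)\, .
\end{equation*}
Taking conditional expectations, the $E_n$-piece of $(\mathrm I)$ is at most $\theta$ times a fixed combination of $n^{-\beta}$, $n^{-\delta}$, $\eta_k\Et\ek{W\mid Y_n>0}$ and $n^{-\beta\delta}\Et\ek{W^{1-\delta}\mid Y_n>0}$. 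Now $\Et\ek{W\mid Y_n>0}=\bW_n/\bigl(n^\beta\Pt(Y_n>0)\bigr)\to\CC^{-1}$ (using $\Et\ek{Y_n}=\bW_n$ and \cref{thm:ConvergenceOfPartitionFunction}), so it is bounded over the window; and $n^{-\beta\delta}\Et\ek{W^{1-\delta}\mid Y_n>0}\to0$ (by Jensen's inequality if $\delta\le1$, and since $Y_n\ge1$ on $\gk{Y_n>0}$ if $\delta>1$). As $\beta\delta,\delta>0$ and $\eta_k\to0$, this piece tends to $0$ uniformly over $n\in(n_{k-1},n_k]$, hence is $<\e/4$ for $k$ large.

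Combining the three estimates, there is $K=K_{\delta,\e,\theta,\bT}<\infty$ such that $\babs{\phit{n}{\theta}-\phit{n_k}{\theta}}<3\e/4<\e$ on $\gk{\Pt(E_n^c\mid Y_n>0)<\e/4}$ whenever $k\ge K$ and $n_{k-1}<n\le n_k$, which is exactly the containment \eqref{eqn:event containment}. I expect the only real (if modest) difficulty to be the bookkeeping: reconciling the two conditioning events $\gk{Y_n>0}$ and $\gk{\Ynk>0}$ (absorbed into $1-r$ via \cref{thm:ConvergenceOfPartitionFunction}) and ensuring every error term is uniform over the window $(n_{k-1},n_k]$; the rest is a routine Lipschitz-and-Jensen computation. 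The genuinely substantial work — showing that the right-hand event in \eqref{eqn:event containment} has conditional probability close to $1$ — is not needed here and is carried out separately in \cref{prop:verify condition}.
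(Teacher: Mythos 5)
Your proof is correct, and it takes a genuinely different route from the paper's. The paper enlarges the bad event to $D_n=E_n^c\cup\gk{\Ynk> n^{\beta+\delta'}}$, invokes the subsequential convergence in distribution (\cref{CorollaryConvergenceCondtionedSubsequence}) to show the truncation piece is negligible under both conditionings, and then compares the two conditional Laplace transforms by folding the ratio $\gamma_n=\Pt(Y_n>0)/\Pt(\Ynk>0)$ together with an exponential correction factor into a single expectation. You instead split by the triangle inequality into (I) the change of random variable under the fixed conditioning $\gk{Y_n>0}$ and (II) the change of conditioning event; you compute (II) exactly as $(1-r)(1-\phit{n_k}{\theta})$ using the nesting $\gk{\Ynk>0}\subseteq\gk{Y_n>0}$, and you control the rescaling term $(n^{-\beta}-n_k^{-\beta})\Ynk$ on the good event via the first-moment identity $\Et\ek{Y_n\mid Y_n>0}=\bW_n/\Pt(Y_n>0)$ and Jensen, rather than via the truncation at $n^{\beta+\delta'}$. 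What this buys you is independence from \cref{CorollaryConvergenceCondtionedSubsequence}: only \cref{thm:ConvergenceOfPartitionFunction} and the almost sure convergence $\bW_n\to\bW>0$ are needed, and the exact identity for (II) replaces the paper's somewhat delicate manipulation of $\gamma_n$ inside the expectation in \eqref{eqn:laplace subtraction}. Your explicit treatment of the case $\delta>1$ (using $Y_n\ge 1$ on survival) is a detail worth keeping, since the lemma only assumes $\delta\in(0,\beta/2)$ and $\beta$ may exceed $2$. Both arguments yield the required estimate uniformly over $n\in(n_{k-1},n_k]$, so the containment \eqref{eqn:event containment} follows in either case.
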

\begin{proof}
Set $\delta' = \frac{1}{2A}$ (recall from \cref{lem:subsequence suff} that $n_k \sim k^A$). For each $n \geq 1$ take $k$ such that $n \in [n_{k-1}, n_{k}]$ and write
\begin{align}
    D_n&=\gk{\abs{Y_n-\Ynk}>Y_n^{1-\delta}\vee n^{\beta-\delta}}\, \cup \gk{\Ynk < n^{\beta+\delta'}}\, .
\end{align}
Now set $\gamma_n = \frac{\Pt\rk{Y_n>0}}{\Pt\rk{\Ynk>0}}$ and note that, $\bP$-almost surely, 
\begin{equation}\label{eqn:gamma bounde}
1 - \frac{\eps}{8} < \gamma_n^{-1} \leq 1 \leq \gamma_n < 1+\frac{\eps}{8}
\end{equation}
for all sufficiently large $n$ by \cref{thm:ConvergenceOfPartitionFunction}. 
Now note that if the event on the right-hand side of \eqref{eqn:event containment} occurs, then provided $n$ is sufficiently large (depending on $\epsilon$), it follows by \cref{CorollaryConvergenceCondtionedSubsequence} and \eqref{eqn:gamma bounde} that $\Pt \left( D_n | Y_n>0 \right) + \Pt \left( D_n | Y_{n_k}>0 \right) < \epsilon/2$ (on a $\bP$-almost sure event). Since the Laplace transform is bounded by $1$, it is therefore sufficient to show that
\begin{equation}\label{Equation6201e}
    \abs{\Et\ek{\ex^{-\theta n_k^{-\beta}Y_{n_k}}\1\gk{D_n^c}|Y_{n_k}>0}-\Et\ek{\ex^{-\theta n^{-\beta}Y_{n}}\1\gk{D_n^c}|Y_{n}>0}} \leq \e/2\, 
\end{equation}
in order to prove the lemma. To this end, note that on the event $D_n^c$ we can write
\begin{align}
    \abs{\nk^{-\beta}\Ynk-n^{-\beta}Y_n}\le \abs{\nk^{-\beta}-n^{-\beta}}Y_{\nk} + n^{-\beta}\abs{\Ynk-Y_n} \leq n^{-\beta}Y_n^{1-\delta} + \Ocal\rk{n^{-(\frac{1}{2A} \wedge \delta)}}.
\end{align}
Here the last line follows by our choice of $\delta'$ since $n_k^{-\beta}-n^{-\beta} = \Ocal\rk{n^{-\beta-\frac{1}{A}}}$. 
Note also that $\Pt\rk{\Ynk=0 \middle\vert Y_n>0} = 1 - \gamma_n^{-1}$. Combining the previous observations, we see that there exists $C<\infty$ such that we can bound the left-hand side of Equation \eqref{Equation6201e} from above by 
\begin{align}\label{eqn:laplace subtraction}
    \Et\ek{\ex^{-\theta n^{-\beta}Y_n}\abs{1-\gamma_n\ex^{C\theta (n^{-\beta}Y_n^{1-\delta} + n^{-(\frac{1}{2A} \wedge \delta)})}}\1\gk{D_n^c} \middle\vert Y_n>0} + \frac{\eps}{8}\, .
\end{align}
In the above expectation, we have that $\ex^{-\theta n^{-\beta}(Y_n-CY_n^{1-\delta})}=o(1)$ when $Y_n > n^{\frac{\beta}{1-\delta/2}}$ and also that $n^{-\beta}Y_n^{1-\delta}+n^{-(\frac{1}{2A} \wedge \delta)}=o(1)$ when $Y_n\le n^{\frac{\beta}{1-\delta/2}}$. We combine this with \eqref{eqn:gamma bounde} to bound \eqref{eqn:laplace subtraction} by
\begin{align}
&\Et\ek{\ex^{-\theta n^{-\beta}Y_n}\abs{1-\gamma_n + \gamma_n(1-\ex^{Cn^{-\beta}Y_n^{1-\delta} + Cn^{-(\frac{1}{2A} \wedge \delta)}})}\1\gk{D_n^c}\big|Y_n>0} + \frac{\eps}{8} < \frac{\eps}{4} + o(1),
\end{align}
thus completing the proof of \eqref{Equation6201e}.
\end{proof}

To ease notation, we fix $\delta \in (0,1)$ which will be made small in Proposition \ref{prop:verify condition}, in a way that depends only on $\alpha$ and other constants, and set, for $n_{k-1}< n\le n_k$,
\begin{align}\label{EquationEn}
 E_n=\gk{\abs{Y_n-\Ynk}>Y_n^{1-\delta}\vee n^{\beta-\delta}}.
\end{align} 
We now set $\eps>0$ and abbreviate the event $ F_n$ which depends on the tree up to generation $n$ (recall also the notation of \eqref{eqn:sigma alg conditioning}):
\begin{equation}
    F_n=F_n(\bT,\e)=\gk{\Pt\rk{E_n |\Gcal_n^\bT,Y_n>0}<\e/4}\, .
\end{equation}
The aforementioned criterion will be as follows: there exists $c>0$ such that for all $n$ large enough
\begin{equation}\label{Equation6211}
    \bP\rk{F_n|\Fcal_n}\ge c>0\, \qquad\text{everywhere}\, .
\end{equation}

We now prove that \eqref{Equation6211} is a sufficient condition for the final result.

\begin{lemma}[Criterion for full convergence]\label{lem:lift to whole sequence}
    If there exists a choice of $\delta>0$ such that for every $\eps>0$ there exists $c>0$ such that Equation \eqref{Equation6211} holds for all sufficiently large $n$, we have that $n^{-\beta}Y_n$, conditioned on survival up to time $n$, converges in distribution to the random variable with distribution function $\phi$.
\end{lemma}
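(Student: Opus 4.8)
The plan is to bootstrap the subsequential convergence of \cref{CorollaryConvergenceCondtionedSubsequence} to the whole sequence, feeding the comparison estimate of \cref{LemmaLink} with the assumed lower bound \eqref{Equation6211}. First, for every $\bT$ and $n$ the map $\theta\mapsto\phit{n}{\theta}$ is non-increasing while $\phi$ is continuous on $[0,\infty)$, so by L\'evy's continuity theorem — together with the monotonicity/continuity argument already used to pass from rational $\theta$ to all $\theta$ in the proof of \cref{CorollaryConvergenceCondtionedSubsequence} — it suffices to show that for $\bP$-a.e.\ $\bT$ and every rational $\theta\ge0$ one has $\phit{n}{\theta}\to\phi(\theta)$. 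Since a countable intersection of $\bP$-a.s.\ events is $\bP$-a.s., we may fix rationals $\theta\ge0$ and $\eps_0>0$ and reduce the lemma to
\[
\bP\rk{\babs{\phit{n}{\theta}-\phi(\theta)}>\eps_0\ \text{for infinitely many }n}=0 .
\]

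Next I would set up the sandwich. Apply the hypothesis with $\eps=\eps_0/2$ to get $c>0$ for which \eqref{Equation6211} holds for all large $n$, and restrict to the $\bP$-a.s.\ event on which both $\phit{n_k}{\theta}\to\phi(\theta)$ (\cref{CorollaryConvergenceCondtionedSubsequence}) and the containment \eqref{eqn:event containment} of \cref{LemmaLink} (with this $\eps$ and our $\theta$) hold. For $n$ large with $n\in(n_{k-1},n_k]$ one then has $\babs{\phit{n_k}{\theta}-\phi(\theta)}<\eps_0/2$, and on $F_n=F_n(\bT,\eps_0/2)$ one has $F_n\subseteq\gk{\Pt\rk{E_n\mid Y_n>0}<\eps_0/8}$ (the supremum in the definition of $F_n$ dominating the plain conditional probability), whence \cref{LemmaLink} gives $\babs{\phit{n}{\theta}-\phit{n_k}{\theta}}<\eps_0/2$ and therefore $\babs{\phit{n}{\theta}-\phi(\theta)}<\eps_0$. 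Consequently, up to a $\bP$-null set,
\[
\gk{\babs{\phit{n}{\theta}-\phi(\theta)}>\eps_0\ \text{i.o.}}\ \subseteq\ \gk{F_n(\bT,\eps_0/2)^c\ \text{i.o.}} ,
\]
so the statement reduces to proving $\bP\rk{F_n(\bT,\eps)^c\ \text{i.o.}}=0$ for every fixed $\eps>0$.

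This last step is the real content, and is where I expect essentially all of the difficulty to lie: one must convert \eqref{Equation6211} — a lower bound on the conditional probability of $F_n$ that is uniform over the tree up to generation $n$ but applies only at a single scale $n$ — into the almost-sure statement that $F_n$ holds for \emph{all} sufficiently large $n$. The two ingredients I would try to combine are (i) that $F_n$ is determined by the tree only through the generations between $n$ and $n_k$, a window of length $n_k-n=o(n)$ (since $n_{k+1}/n_k\to1$), so that along a sparse sub-subsequence these windows are disjoint and \eqref{Equation6211} provides, conditionally on the earlier windows, an independent chance at least $c$ that $F_{\cdot}$ holds; and (ii) that the failure of the small-fluctuation event $E_n$ at a single scale $n$ is not isolated — it should force $Y_m$ to differ by a macroscopic amount from the generation size at the top of its block for all $m$ in a whole sub-block around $n$ — so that the set of ``bad'' scales decomposes into relatively few intervals, over a well-separated subfamily of which one can sum \eqref{Equation6211} and invoke Borel--Cantelli to conclude that $\bP$-a.s.\ only finitely many are nonempty. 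Making (ii) precise, so that one is genuinely reduced to finitely many bad blocks a.s.\ and the uniformity in \eqref{Equation6211} is enough to close the Borel--Cantelli estimate, is the crux; by contrast, the reductions in the first two paragraphs and the appeals to \cref{CorollaryConvergenceCondtionedSubsequence} and \cref{LemmaLink} are routine bookkeeping.
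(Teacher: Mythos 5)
Your first two paragraphs are sound and agree with the paper's setup, but the statement you reduce the lemma to --- $\bP\rk{F_n^c \text{ i.o.}}=0$ --- is a genuine gap: it is strictly stronger than anything the hypothesis can deliver. Equation \eqref{Equation6211} only guarantees $\bP\rk{F_n\mid\Fcal_n}\ge c$ for some possibly \emph{small} $c>0$, so the failure probabilities are bounded only by $1-c$, which is not summable; the first Borel--Cantelli lemma cannot close your step (ii), and your step (i) (approximate independence across disjoint windows) points in the wrong direction --- a second-Borel--Cantelli-type heuristic would suggest that $F_n^c$ \emph{does} occur infinitely often when $c$ is small. So the crux you flag is not merely hard; it is unreachable from the stated hypothesis, and no blocking/decomposition of the bad scales can repair this.

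The paper's proof never needs $F_n$ to hold for all large $n$. Keep the events $A_n=\gk{\abs{\phit{n}{\theta}-\phi(\theta)}>2\e}$ and $B_k=\gk{\abs{\phit{n_k}{\theta}-\phi(\theta)}>\e}$, note that $A_n$ is $\Fcal_n$-measurable, and set $\tau_K=\inf\gk{n\ge n_K\colon A_n\text{ occurs}}$. Since $\tau_K$ is a stopping time and \eqref{Equation6211} holds \emph{everywhere}, on $\gk{\tau_K<\infty}$ one has $\bP\rk{F_{\tau_K}\mid\Fcal_{\tau_K}}\ge c$, whence, using the containment $A_n\cap F_n\subset B_k$ from \cref{LemmaLink},
\begin{equation*}
\bP\rk{\exists k\ge K\colon B_k}\ \ge\ \bP\rk{\exists n\ge n_K\colon A_n\cap F_n}\ \ge\ c\,\bP\rk{\tau_K<\infty}\ =\ c\,\bP\rk{\exists n\ge n_K\colon A_n}.
\end{equation*}
The left-hand side tends to $0$ as $K\to\infty$ by \cref{CorollaryConvergenceCondtionedSubsequence}, so $\bP\rk{A_n\text{ i.o.}}=0$. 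The key point you are missing is that a positive conditional probability of $F$ at the \emph{first} bad time after $n_K$ already transfers a fixed fraction of $\bP\rk{\exists n\ge n_K\colon A_n}$ onto $\gk{\exists k\ge K\colon B_k}$; one never needs $F_n$ to hold eventually almost surely.
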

\begin{proof}
Assume that $\delta \in (0, \frac{\beta}{2})$ is fixed. Now fix some $\eps>0$ and for all $n, k \geq 0$ write 
    \begin{align}
    \begin{split}
        A_n&=\gk{\abs{\phit{n}{\theta}-\phi(\theta)}>2\e}\, , \\
        B_k&=\gk{\abs{\phit{\nk}{\theta}-\phi(\theta)}>\e}\, ,
        \end{split}
    \end{align}
Note that $A_n$ is $\Fcal_{n}$ measurable and $B_k$ is $\Fcal_\nk$ measurable.
    
    By Lemma \ref{LemmaLink}, we have that there $\bP$-almost surely exists $K_o<\infty$ such that for all $k\ge K_o$ and all $n \in [n_{k-1}, n_{k}]$, $A_n$ implies $B_k$ on the event $F_n$. In other words we have for all $n_{k-1}\leq n\leq\nk$ that
    \begin{equation}\label{eqn:A F B inclusion}
        A_n\cap F_n\subset B_k\, .
    \end{equation}
  It follows that for any $K>K_o$,
    \begin{align}
    \begin{split}\label{eqn:AF B inclusion}
        \bP\rk{\exists k\ge K\colon B_k\text{ occurs}}&\ge \bP\rk{\exists n\ge n_K \colon A_n \cap F_n\text{ occurs}}.
   \end{split}
    \end{align}
Set $\tau_K = \inf \{ n \geq n_K: A_n \text{ occurs}\}$. Since $\tau_K$ is a stopping time, on the event $\{\tau_K < \infty\}$ we have that $\bP\rk{F_{\tau_K}|\tau_K < \infty, \Fcal_{\tau_K}} \ge c$ by \eqref{Equation6211}. Hence we can lower bound the right-hand side of \eqref{eqn:AF B inclusion} by 
\begin{align}\label{eqn:io from subsequence}
\bE\ek{    \bP\rk{\tau_K < \infty \text{ and } F_{\tau_K}|\Fcal_{\tau_K}}} \geq c\bP \rk{\tau_K < \infty}.
\end{align}
Combining \eqref{eqn:AF B inclusion} and \eqref{eqn:io from subsequence} gives
\begin{align}
   \bP\rk{\exists k\ge K\colon B_k\text{ occurs}} \geq c\bP\rk{\exists n\ge n_K \colon A_n\text{ occurs}}.
\end{align}
On account of Corollary \ref{CorollaryConvergenceCondtionedSubsequence}, we have that $B_k$ occurs only finitely many times $\bP$-almost surely, and hence the left hand side above tends to $0$ as $K \to \infty$. We deduce that the same holds for the right hand side, and therefore that $\bP \rk{A_n \text{ i.o.}}=0$. Since this holds for any $\eps>0$, this proves the claim.
\end{proof}

For the rest of the proof we therefore focus on establishing \eqref{Equation6211}. Let $\OpenEd_n=\gk{v\in \bT_n\colon 0\leftrightarrow\bT_n}$. To this end we fix $\eps>0$ and define a collection of ``good" sets of vertices at level $n$ in $\bT$ by setting
\begin{equation}
    A_\bT\hk{n}=\gk{x_n\subset \bT_n\colon \Pt\rk{E_n|\OpenEd_n=x_n}<\e/4}\, .
\end{equation}
Note that $\gk{\OpenEd_n=x_n}$ is measurable with respect to the $\sigma$-algebra generated by the set of possible percolation configurations up to level $n$ on $\bT$, which we recall is denoted $\Gcal_n^\bT$.

The next proposition contains the key estimate.

\begin{proposition}[All subsets are good]\label{prop:verify condition}
There is a choice of $\delta>0$ in \eqref{EquationEn} such that, for all $x_n\subset\bT_n$ we have that (uniformly over all choices of $x_n$ and all possibilities for $\bT_n$)
    \begin{equation}\label{EquationUniformXnBound}
        \bP\rk{x_n\in A_\bT\hk{n}|\Fcal_n}=1-o(1)\, .
    \end{equation}
\end{proposition}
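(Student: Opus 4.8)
The plan is to show that, conditionally on $\Fcal_n$ (so the tree up to level $n$ is fixed), the event $E_n = \{|Y_n - \Ynk| > Y_n^{1-\delta} \vee n^{\beta - \delta}\}$ has conditional probability less than $\eps/4$ with $\bP$-probability $1 - o(1)$, \emph{uniformly} over the fixed set $x_n = \OpenEd_n$ of level-$n$ vertices connected to the root. The key observation is that, given $\OpenEd_n = x_n$, the random variable $\Ynk$ decomposes as $\Ynk = \sum_{v \in x_n} \Ynk^{(v)}$, where $\Ynk^{(v)}$ is the number of descendants of $v$ at level $n_k$ that are connected to $v$ through the sub-cluster hanging off $v$; these are conditionally independent given the subtrees $\bT^{(v)}$ rooted at the vertices $v \in x_n$. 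Note that $|Y_n| = |x_n|$ is a \emph{fixed} (non-random, given $\Fcal_n$) quantity in this conditioning. So we must control a sum of $|x_n|$ independent terms. The strategy splits into two regimes according to the size of $|x_n|$ relative to $n^\beta$.

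First I would handle the regime where $|x_n| = |Y_n|$ is small, say $|x_n| \leq n^{\beta - \delta}/2$ (or some similar threshold adapted to the exponents). In this case we only need to show $\Ynk \leq n^{\beta - \delta}$ with high conditional probability: this follows because each $\Ynk^{(v)}$ is, under the annealed law, at level $n_k - n$ of a critical cluster, so $\Er_{\bT}[\Ynk^{(v)}] = \mu^{-(n_k - n)} \cdot \mu^{n_k - n} \cdot (\text{something of order } 1)$... more precisely, the unconditional expectation of the level-$(n_k - n)$ size of the cluster is $\Ocal(1)$ by criticality, so $\Er[\Ynk \mid \OpenEd_n = x_n]$ is of order $|x_n|$, and since the subtrees $\bT^{(v)}$ are typical Galton--Watson subtrees, a first-moment bound over $\bP$ together with Markov's inequality gives the claim. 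The main point is that $n_k - n \geq n_k - n_{k-1} \to \infty$ but $n_k/n_{k-1} \to 1$, so $n_k - n = \Ocal(n^{1 - 1/A})$, which is negligible on the scale $n^\beta$; thus $\Ynk$ concentrates near $|x_n|$ and, when $|x_n|$ is small, stays below the threshold.

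Second, and this is where the real work is, I would handle $|x_n| \gtrsim n^{\beta - \delta}$. Here $Y_n^{1-\delta} = |x_n|^{1-\delta}$ is the relevant error scale, and we need a concentration estimate showing $|\Ynk - \Er_{\bT}[\Ynk \mid \OpenEd_n = x_n]|$ — and then also $|\Er_{\bT}[\Ynk \mid \cdot] - |x_n||$ — is at most $|x_n|^{1-\delta}$ with $\bP$-probability $1 - o(1)$. For the centering: $\Er_{\bT}[\Ynk^{(v)}]$ equals the $\bP$-random quantity $\mu^{-(n_k - n)} \sum_{w \in \bT^{(v)}_{n_k}, v \preceq w} (\text{connection prob})$, whose annealed mean is $1$; averaging over the $|x_n|$ independent subtrees and using that $n_k - n$ is small, a second-moment (or $p$-th moment for $p < \alpha$, to cope with heavy tails) bound shows the average of the conditional means is within $|x_n|^{1-\delta}$ of $1$ with the required $\bP$-probability, \emph{provided $\delta$ is chosen small enough relative to $\alpha$ and $A$}. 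For the fluctuations of $\Ynk$ around its conditional mean: the summands $\Ynk^{(v)}$ are i.i.d.\ copies (given the subtree structure) of the level-$(n_k-n)$ generation size of a critical cluster, which lies in the domain of attraction of an $\alpha$-stable law, so a sum of $|x_n|$ of them has fluctuations of order $|x_n|^{1/\alpha}$ times a slowly varying correction — and since $1/\alpha < 1 - \delta$ for $\delta$ small, this is within the allowed window. Here I would invoke the large-deviation estimate for sums of heavy-tailed independent variables announced for the Appendix, applied conditionally on $\Fcal_n$, and then average over $\bP$ with a union bound.

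The main obstacle is the bookkeeping of exponents: one must choose $\delta = \delta(\alpha, A)$ simultaneously small enough that (i) $|x_n|^{1/\alpha} \ll |x_n|^{1-\delta}$ in the large-$|x_n|$ regime, (ii) the centering error from the finite-$p$-moment bound over $\bP$ is $o(|x_n|^{1-\delta})$ after a Borel--Cantelli-type union bound over all $x_n \subseteq \bT_n$ (there are at most $2^{|\bT_n|}$ such sets, but the relevant bound depends only on $|x_n|$, so the union is over $|\bT_n|$ values of the cardinality, which is harmless), and (iii) the contribution $n_k - n = \Ocal(n^{1-1/A})$ from the gap between $n$ and $n_k$ does not spoil the estimate on the scale $n^{\beta - \delta}$, i.e.\ $\beta - 1/A$-type exponents stay on the right side. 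The heavy-tailed case is genuinely harder than finite variance because we cannot use a clean second moment for $\bW$-type quantities; this is precisely why the Appendix large-deviation lemma is needed and why $p < \alpha$ must be used throughout in place of $p = 2$.
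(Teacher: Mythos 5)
Your overall strategy is the same as the paper's: condition on $\Fcal_n$ and $\OpenEd_n=x_n$, decompose $\Ynk=\sum_{v\in x_n}\Ynk^{(v)}$ into contributions from the $N=|x_n|$ i.i.d.\ subtrees, observe that after averaging over $\bP(\cdot\,|\,\Fcal_n)$ these become i.i.d.\ copies of the \emph{annealed} critical generation size $Y_l$ at level $l=n_k-n$, bound the resulting annealed large-deviation probability using the Appendix lemma, and finish with Markov's inequality over $\bP$. That reduction (the paper's \eqref{eqn:expected deviation}) is exactly right, as is your observation that the bound depends only on $|x_n|$.

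The genuine gap is in the fluctuation scale for $\sum_{i=1}^N(Y_l^{(i)}-1)$. You assert the fluctuations are of order $N^{1/\alpha}$ up to a slowly varying correction, and accordingly list ``$N^{1/\alpha}\ll N^{1-\delta}$'' as the binding condition. This is false: the summands depend on $l$, and their spread grows polynomially in $l$. Each $Y_l^{(i)}$ vanishes with probability $1-p_l$ where $p_l\sim \CC l^{-\beta}$, and is of order $p_l^{-1}=l^{\beta}$ conditioned on survival; so the sum fluctuates on the scale $\sqrt{N/p_l}+ (Np_l)^{1/\alpha+\eps}p_l^{-1}\approx \sqrt{N}\,l^{\beta/2}+N^{1/\alpha+\eps}l^{1/\alpha}$ (a binomial contribution from the number $\kappa$ of surviving lineages plus a stable contribution from the conditioned sizes $\tilde Y_l^{(i)}$, which have mean $p_l^{-1}$, not $O(1)$). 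Since $l$ can be as large as $n_k-n_{k-1}\asymp n^{1-1/A}$, the extra factors $l^{\beta/2}$ and $l^{1/\alpha}$ are polynomially large in $n$, and the comparison against $N^{1-\delta}\vee n^{\beta-\delta}$ succeeds only by a margin of order $n^{-c/A}$, i.e.\ it hinges on the subsequence $(n_k)$ being dense enough and on choosing $\delta,\xi,\eps$ jointly small; this is precisely the two-case analysis on $l$ versus $N^{\alpha-\alpha\delta-1-\xi}$ in the paper, which your conditions (i)--(iii) do not reproduce (condition (iii) gestures at it but does not identify the correct constraint). Relatedly, your intermediate centering on the quenched conditional mean $\Er_{\bT}[\Ynk\,|\,\cdot]$ is superfluous once you average over $\bP$ anyway, and your small-$N$ regime via a first-moment Markov bound only covers $N=o(n^{\beta-\delta})$, leaving the window up to $n^{(\beta-\delta)/(1-\delta)}$ to the fluctuation analysis --- which is fine, but only once that analysis carries the correct $l$-dependent scale.
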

\begin{proof}
First some notation: in this proof, for $a, b, c, d \in \R$ with $c<d$ we write $a=b[c,d]$ if there exists $C \in [c,d]$ such that $a=bC$.

   Now to the proof: fix $n \geq 0$ and assume that $n_{k-1} \leq n \leq n_k$. Take $x_n \subset \bT_n$, set $N=|x_n|$ and write $\gk{v_1,\ldots,v_N}$ for the (left to right) enumeration of the vertices contained in $x_n$. With respect to $\bP$, conditionally on $\Fcal_n$, we write $\bT\hk{i}_n$ for the subtree emanating from $v_i$ for $i=1,\ldots,N$. Note that $\rk{\bT^{(i)}_n}_{i=1}^N$ is a family of i.i.d. Galton--Watson trees. For each $1 \leq i \leq N$ and $k \geq 0$ also let $Y\hk{i}_k$ denote the number of vertices in the $k^{th}$ generation of $\bT\hk{i}_n$ that are connected to $v_i$ by an open path. We can then write, conditionally on the event $\{y_n=x_n\}$,
    \begin{equation}\label{eqn:Yn diff sum}
        \abs{Y_n-\Ynk}=\abs{\sum_{i=1}^N(Y\hk{i}_l-1)}\, ,
    \end{equation}
    where $l=\nk-n=\Ocal\rk{k^{A-1}}=\Ocal\rk{n^{\frac{A-1}{A}}}$.
    
    Recall from the introduction that $\P$ is the law of a critical Galton--Watson tree corresponding to the annealed percolation model. We therefore have (where $E_n$ is as in \eqref{EquationEn}) that
    \begin{equation}\label{eqn:expected deviation}
        \bE\ek{\Pt\rk{E_n|y_n=x_n}|\Fcal_n}=\P^{\otimes N}\rk{\abs{\sum_{i=1}^N(Y\hk{i}_l-1)}>N^{1-\delta}\vee n^{\beta-\delta}}\, ,
    \end{equation}
    where we abuse notation and use $Y\hk{i}_l$ as the number of surviving vertices in the annealed critical tree for the rest of this proof.
    
    Let $\kappa= \# \{i \leq N: Y_l\hk{i} \neq 0\}$, so that by \eqref{eqn:slack prob convergence} $\kappa$ is binomially distributed with parameters $p_l:=\P\rk{Y_l>0}\sim \CC l^{-\beta}$ and sample size $N$ under $\P$. We then have that
  \begin{multline}
        \P^{\otimes N}\rk{\abs{\sum_{i=1}^N(Y\hk{i}_l-1)}>N^{1-\delta}\vee n^{\beta-\delta}}\\
        =\sum_{\kappa=0}^N\binom{N}{\kappa}p_l^\kappa(1-p_l)^{N-\kappa}{\P^{\otimes \kappa}\rk{\abs{-(N-\kappa)+\sum_{i=1}^\kappa(\tilde{Y}\hk{i}_l-1)}>N^{1-\delta}\vee n^{\beta-\delta}}}\, ,
    \end{multline}
    where $\tilde{Y}\hk{i}_l$ are now the i.i.d. sizes of the populations conditioned to survive to level $l$. Note that $\E\ek{\tilde{Y}\hk{i}_l}=p_l^{-1}$ by Bayes' formula and since the original process was critical.

 Now pick some $\eps>0$ and for each $l \geq 1$ and $\kappa \geq 1$ define the event
    \begin{equation}
        A_l(\kappa)=\gk{\sum_{i=1}^\kappa \tilde{Y}_l\hk{i}=\kappa p^{-1}_l+\kappa^{1/\alpha+\eps}p_l^{-1}[-1,1]}.
    \end{equation}
    On the event $A_l(\kappa)$, we get that our sum $\sum_{i=1}^N ({Y}_l\hk{i}-1)$ is equal to $-N+\kappa p^{-1}_l+\kappa^{1/\alpha+\epsilon}p^{-1}_l[-1,1]$. Now for an additional parameter $M>0$ also consider the event 
    \begin{equation}\label{Equation12052023}
        B_{N,M}=\gk{\kappa=Np_l+\sqrt{Np_l}[-M,M]}\, .
    \end{equation}

   We henceforth fix some small $\eta>0$ and take $M=n^{\eps}$. By Lemma \ref{LemmaAnnealedLargeDeviation} (in the Appendix), we get that $A_l(\kappa)$ occurs with probability at least $1-\eta$, given $\kappa>\kappa_0$ for some $\kappa_0$ depending on $\alpha>1$ and $\eta$. We will treat the case $\kappa\le \kappa_0$ later. 
   
   Recall the Chebychev bound $\P\rk{\abs{X-np}\ge t\sqrt{np(1-p)}}\le t^{-2}$, for $X$ a binomial random variable with parameters $n$ and $p$. Since we are only interested in upper bounds, we bound $1-p_l$ by $1$. This, together with the previous paragraph shows that we can conclude that $B_{N,M}$ and $A_l(\kappa)$ occur with probability at least $1-\eta$, and we henceforth assume they occur.
   
Now note that there exists some $c=c(M)=\Ocal{M(Np_l)^{-1/2}}<\infty$ such that on $B_{N,M}$, we have that
    \begin{equation}
        \kappa^{1/\alpha+\eps}\le c\ek{Np_l \vee 1}^{1/\alpha+\eps}\, ,
    \end{equation}
and hence there exists another constant $c<\infty$ such that the sum in the equation below is bounded by (also using the fact that  $p_l \sim \CC l^{-\beta}$)
\begin{align}\label{Equ2010231}
    \begin{split}
       \sum_{i=1}^\kappa \tilde{Y}_l\hk{i} - N \leq \kappa p^{-1}_l+\kappa^{1/\alpha+\eps}p_l^{-1}[-1,1] - N &\leq \sqrt{N/p_l}[-M,M]+c\ek{Np_l \vee 1}^{1/\alpha+\eps}p_l^{-1} [-1,1] \\
       &\le   cN^{1/2}l^{\beta/2}[-M,M]+c(l^{\beta} \vee N^{1/\alpha+\eps}l^{\beta(1-1/\alpha - \eps)}[-1,1])\\
        &\le cN^{1/2}l^{\beta/2}[-M,M]+c(l^{\beta} \vee N^{1/\alpha+\eps}l^{1/\alpha}[-1,1])\, .
    \end{split}
\end{align}
Now take some $\xi>0$ and wlog assume that we originally picked $\eps>0$ small enough that $\eps < \xi/2\alpha$. Then note that if $l\le N^{-\xi}\min\gk{N^{1-2\delta}, N^{\frac{1-\delta-1/\alpha}{\beta(1-1/\alpha)}}}=N^{-\xi }N^{\alpha-\alpha\delta-1}$ (since $\delta<1$ and $\alpha=(\beta+1)/\beta$), we have that the equation above is at most $cn^{\eps}N^{1-\delta-\frac{\xi}{2\alpha}}$ and hence smaller than $N^{1-\delta} \vee n^{\beta-\delta}$ (for all sufficiently large $n$).
    
    On the other hand, assume now that $l>N^{-\xi }N^{\alpha-\alpha\delta-1}$. Write $B=(A-1)/A$ and set $\zeta=\frac{\alpha \delta + \xi}{\alpha-1}$ and $\zeta' = \frac{2\zeta}{\alpha}+ \eps$. Recall that there exists $C<\infty$, not depending on $n$, such that $l\le C n^{B}$ (see the remark under \eqref{eqn:Yn diff sum}) and recall that $l\ge N^{-\xi+\alpha(1-\delta)-1}=N^{(\alpha-1)(1-\zeta)}$. Provided that we chose $\delta$ and $\xi$ sufficiently small in the first place, we have that $N\le l^{\beta+2\zeta}$. This implies that the latter expression in \eqref{Equ2010231} is upper bounded by
    \begin{equation}\label{eqn:kappa N l calc}
cl^{\beta+\zeta}[-M,M]+cl^{(\beta+1)/\alpha+2\zeta/\alpha+\eps}[-1,1] = cl^{\beta+\zeta}[-M,M]+cl^{\beta+\zeta'}[-1,1] \leq  cn^{B(\beta+\zeta')+\eps} \, .
    \end{equation}
Here the final inequality holds on account of the condition imposed on $l$, and on substituting $M = n^{\eps}$.

Now note that $\zeta'>0$ can be made as small as we want, by decreasing $\delta, \eps$ and $\xi$ towards $0$. Hence, since $B<1$, we can decrease these all sufficiently that the right-hand side of \eqref{eqn:kappa N l calc} is at most $n^{\beta-\zeta}$.

It remains to show treat the case $\kappa\le \kappa_0$. However, in that case, we can assume that there is some $L=L(\eta,\kappa_0)$ such that $p_lN\le L$; otherwise $B_{N,M}$ doesn't hold, which can only happen with probability $1-\eta$, provided we chose $L$ large enough. This implies that for some $L'$, we have $N\le L' n^{\beta\rk{1-A^{-1}}}$. From there on, we can proceed as before, as $
\sum_{i=1}^{\kappa_0}\tilde{Y}_l\hk{i}<n^{\beta-\delta}$ outside a set of vanishing probability.

    It follows that, for all $\eta>0$ and $n$ sufficiently large
    \begin{equation}
        \bE\ek{\Pt\rk{E_n|\OpenEd_n=x_n}|\Fcal_n}\le \eta\, .
    \end{equation}
    Hence using Markov's inequality we deduce that
    \begin{equation}
        \bP\rk{x_n\notin A_\bT\hk{n}|\Fcal_n}\le \eta\, 
    \end{equation}
    for all sufficiently large $n$. As $\eta>0$ was arbitrary, the claim follows.
\end{proof}

\cref{thm:ConvergenceOfLaplaceTransform} now follows as a direct consequence of \cref{lem:lift to whole sequence} and \cref{sec:BrachningProcessProf}.

\section{Convergence of the branching process: proof of \cref{thm:ConvergenceOfBP}}\label{sec:BrachningProcessProf}
In this section we will consider the convergence of the process $(n^{-\beta}Y_{n(1+t)})_{t \geq 0}$ under the conditioning $Y_{n}>0$, in the Skorokhod-$J_1$ topology. Note that the Skorokhod-$J1$ topology is the the strongest amongst the different Skorokhod topologies, see \cite{SkorTop2}. For further background and definitions on the Skorokhod space of \cadlag functions (denoted $D[0, \infty)$) and the Skorokhod-$J_1$ topology we refer to \cite[Section 12]{billingsley1968convergence}. As outlined in the introduction, it is known that under the annealed law the process $(n^{-\beta}Y_{n(1+t)})_{t \geq 0}$ under the conditioning $Y_n>0$ converges to a \textit{continuous-state branching process} with entrance measure determined by \eqref{eqn:slack Laplace convergence}; see for example \cite[Theorem 2.1.9]{li2012continuousstate}. More precisely, the limiting process $(\tY_t)_{t \geq 0}$ is a Markov process on $[0,\infty)$ such that $\tY_0$ is a random variable with Laplace transform given by $\phi (\theta)$, and for all $t,x, y \geq 0$ its transition kernels $P_{t}(x,y)$ satisfy the \textit{branching property}
\begin{equation}\label{eqn:branching prop 2}
    P_{t}(x+y,\cdot) = P_{t}(x,\cdot) \ast P_{t}(y,\cdot).
\end{equation}
We will continue to denote these transition kernels by $P_t(\cdot, \cdot)$ throughout this section. We also remark that for $a>0$ and $t>s>0$, the law of $\tY_t$ conditionally on the event $\{\tY_s=a\}$ is that of a sum of $N$ independent copies of $X$ where $X$ has Laplace transform $\phi (\theta (t-s)^{\beta} )$ and $N$ has the distribution $\textsf{Poisson}(a \CC (t-s)^{-\beta})$. In particular its Laplace transform is therefore given by
\begin{equation}\label{eqn:annealed Poisson mgf}
Q_{a,s,t}(\theta) := \E \ek{e^{-\theta \tY_t} \vert \tY_s = a} = \E \ek{  \E \ek{e^{-\theta X}}^N} = \exp\{ -a \CC(t-s)^{-\beta} (1-\phi (\theta (t-s)^{\beta}))\}.
\end{equation}
In fact we can and will use \eqref{eqn:annealed Poisson mgf} as a characterisation of this limiting branching process.

By \cite[Theorem 13.1]{billingsley1968convergence}, we can prove \cref{thm:ConvergenceOfBP} by first showing that the sequence of processes is tight, and then showing that the finite dimensional marginals converge for all $t\in [0,\infty)$.

\subsection{Tightness}
We use the standard criteria for tightness in the Skorokhod-$J_1$ topology given in \cite[Theorem 13.2]{billingsley1968convergence}. If $X=(X_t)_{t \geq 0}$ is a \cadlag function, we define 
\[
||X||_{\infty} = \sup_{t \geq 0} |X_t|, \qquad \omega_\delta '(X) := \inf_{\{t_i\}_{i=0}^m} \sup_{1 \leq i \leq m} \sup_{s, t \in [t_{i-1}t_i)} |X_t - X_s|,
\]
where the infimum is taken over all partitions $0=t_0 < t_1 < \ldots < t_m$ with $t_i - t_{i-1} > \delta$ for all $i$. For $K\geq 0$, we also let $X^K$ denote the function defined by 
\[
X^K_t = X_t \mathbbm{1}\{t \leq K\}.
\]

We will use the following result.

\begin{proposition}\cite[Theorem 13.2]{billingsley1968convergence}.\label{prop:billingsley tightness criteria}
    Let $(X^{(n)})_{n \geq 0}$ be a sequence of \cadlag functions $[0, \infty) \to \R$ defined on a probability space $(\Omega, \Fcal, P)$. For $K>0$ set $X^{(n,K)} = (X^{(n)})^K$. Then the sequence is tight if and only if for every $K>0$, the following conditions are satisfied.
    \begin{enumerate}[(a)]
        \item $\lim_{C \to \infty} \limsup_{n \to \infty} P \rk{||X^{(n,K)}||_{\infty} \geq C} = 0$.
        \item For all $\eps>0$, $\lim_{\delta \downarrow 0} \limsup_{n \to \infty} P \rk{ \omega_{\delta}'(X^{(n,K)}) > \eps} = 0$.
    \end{enumerate}
\end{proposition}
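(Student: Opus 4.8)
The plan is to deduce the statement from two classical ingredients: Prohorov's theorem, which for a Polish space identifies tightness of a family of laws with relative compactness in the weak topology, and the Arzel\`a--Ascoli-type characterisation of relatively compact subsets of the Skorokhod space. Since $D[0,\infty)$ equipped with the Skorokhod-$J_1$ topology is a complete separable metric space, Prohorov applies, and the whole proof reduces to unwinding what a ``compact enough'' family of paths looks like.

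First I would reduce the problem on $[0,\infty)$ to one on each compact interval $[0,K]$. The Skorokhod-$J_1$ metric on $D[0,\infty)$ is constructed as a weighted combination of the Skorokhod metrics on the spaces $D[0,K]$, $K \in \N$, so a family of laws on $D[0,\infty)$ is tight if and only if, for every $K$, its image under a suitable restriction map to $D[0,K]$ is tight; this is where the truncated processes $X^{(n,K)}$ enter. On $D[0,K]$ I would then invoke the compactness criterion: a set $A \subset D[0,K]$ has compact closure precisely when $\sup_{x\in A}\norm{x}_\infty < \infty$ and $\lim_{\delta \downarrow 0}\sup_{x\in A}\omega_\delta'(x)=0$. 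Combining this with Prohorov's theorem shows that a family of laws on $D[0,K]$ is tight exactly when, for every $\eta>0$, a single set $A$ with these two properties carries all but $\eta$ of the mass of every law in the family --- and this is precisely the pair of conditions \textbf{(a)} and \textbf{(b)} in the statement, now quantified over all $K$.

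The one genuinely delicate point, which is where I would expect to have to be careful, is that the restriction/truncation maps $D[0,\infty)\to D[0,K]$ are \emph{not} globally continuous: a path with a jump exactly at time $K$ causes trouble. One circumvents this by truncating only at values of $K$ that are almost surely continuity points of the limiting (or every) path --- a co-countable set of $K$ suffices --- so that tightness is genuinely inherited in both directions. Since all of this is worked out in \cite[Chapters 12--13]{billingsley1968convergence}, and in particular the statement here is exactly \cite[Theorem 13.2]{billingsley1968convergence}, I would not reproduce the argument in full but simply cite it; the content of this section is in applying the criterion, not in reproving it.
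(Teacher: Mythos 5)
The paper offers no proof of this proposition at all --- it is imported verbatim as \cite[Theorem 13.2]{billingsley1968convergence} --- and your proposal correctly recognises this, sketches the standard argument (Prohorov's theorem on the Polish space $D[0,\infty)$ combined with the Arzel\`a--Ascoli-type compactness criterion for $D[0,K]$, together with the genuine subtlety that the restriction maps fail to be continuous at jump times $K$), and then rightly defers to the citation. This matches the paper's treatment exactly, so nothing further is needed.
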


This leads us rather straightforwardly to the following proposition.

\begin{proposition}\label{prop:tightness}
For each $n \geq 1$ let $(\tY^{(n)}_t)_{t \geq 0}$ denote the process $(n^{-\beta}Y_{n(1+t)})_{t \geq 0}$ under the conditioning $Y_{n}>0$ (i.e., set $\tY^{(n)}_t = n^{-\beta}Y_{n(1+t)}$ under this conditioning). Then, $\bP$-almost surely, the sequence of processes $(\tY^{(n)})_{n \geq 1}$ is tight with respect to the Skorokhod-$J_1$ topology.
\end{proposition}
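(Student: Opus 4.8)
The plan is to verify the two conditions (a) and (b) of Proposition \ref{prop:billingsley tightness criteria} directly, exploiting the decomposition of $Y_{n(1+t)}$ through level $n$ together with the annealed results already available. Throughout, work on the $\bP$-almost sure event on which \cref{thm:ConvergenceOfPartitionFunction} and \cref{CorollaryConvergenceCondtionedSubsequence} hold, and on which (by the argument behind \cref{rmk:no two vertices connect}) conditioned on $\{Y_n>0\}$ there is with high probability a single vertex at level $\ell_n$ connecting the root to level $n$. The key point is that, conditioned on $Y_n>0$ and on the value $Y_n = \lfloor n^{\beta} a \rfloor$ (say), the process $(n^{-\beta} Y_{n(1+t)})_{t\ge 0}$ is a sum of $Y_n$ i.i.d. critical Galton--Watson clusters in the annealed class, each started from one of the $Y_n$ vertices at level $n$; by the annealed convergence to the CSBP (quoted from \cite{li2012continuousstate} above) together with the tightness already known in the annealed setting, the relevant quantities are controlled uniformly.

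For condition (a), I would argue as follows. Fix $K>0$. Conditioned on $\{Y_n>0\}$, \cref{thm:ConvergenceOfLaplaceTransform} (proved in the previous section) gives that $n^{-\beta}Y_n$ converges in distribution to the random variable $Y$ with Laplace transform $\phi$, hence this sequence is tight; so it suffices to bound $\sup_{0\le t\le K} n^{-\beta}Y_{n(1+t)}$ on the event that $n^{-\beta}Y_n \le a$ for large $a$. On this event, $Y_{n(1+t)}$ is dominated by the generation sizes of a sum of at most $\lceil n^\beta a\rceil$ i.i.d. annealed critical clusters run for time $n(1+t)$; but the maximal generation size of a single critical annealed cluster over $[n, n(1+K)]$, rescaled by $n^{-\beta}$, is a tight family (this follows from the annealed CSBP convergence, whose limit has \cadlag, hence locally bounded, paths), and summing $\Ocal(n^\beta)$ of them and rescaling is exactly the annealed statement $(n^{-\beta}Y_{n(1+t)})_{t\le K}$ converges, which is itself tight. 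The only subtlety is that we are conditioning on $\{Y_n>0\}$ rather than using the annealed law directly; but since $\frac{\Pt(Y_{n(1+t)}>0)}{\Pt(Y_n>0)}$ and the analogous annealed ratios are bounded away from $0$ and $\infty$ uniformly in $t\in[0,K]$ (by \cref{thm:ConvergenceOfPartitionFunction} and \eqref{eqn:slack prob convergence}), the conditioning changes probabilities only by a bounded factor, so tightness is preserved. This gives (a).

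For condition (b), the oscillation estimate, I would use the Markov property at level $n$ and the branching structure. Given $Y_n$, the process $(\tY^{(n)}_t)_{t\ge 0}$ is a sum of $Y_n$ independent annealed critical clusters, so for $0\le s<t$, conditionally on $\tY^{(n)}_s=b$ the increment $\tY^{(n)}_t-\tY^{(n)}_s$ is again (approximately, for $n$ large) a difference of sums governed by the branching property \eqref{eqn:branching prop 2}, whose annealed analogue has Laplace transform \eqref{eqn:annealed Poisson mgf}; in particular the annealed process $(n^{-\beta}Y_{n(1+t)})_{t\ge 0}$ conditioned on $Y_n>0$ satisfies the oscillation bound of Proposition \ref{prop:billingsley tightness criteria}(b) because it converges in the $J_1$ topology to the CSBP $(\tY_t)_{t\ge 0}$, and a convergent sequence in a metrizable topology is automatically tight, hence satisfies (a) and (b). Thus the annealed version of (b) is free, and the quenched version again follows by comparing with the annealed law via the bounded Radon--Nikodym factor coming from the ratio of survival probabilities, exactly as in (a). I would phrase this cleanly by noting that conditionally on $\Fcal_n$ and on $\{Y_n>0\}$, the law of $(\tY^{(n)}_t)_{t\ge 0}$ is, up to the bounded reweighting by survival probabilities, a mixture over the value of $Y_n$ of sums of i.i.d. annealed clusters, and both (a) and (b) are stable under such bounded reweightings and under taking mixtures in which the mixing variable $n^{-\beta}Y_n$ is itself tight.

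The main obstacle is handling the conditioning on $\{Y_n>0\}$ cleanly: one must be careful that this is not the annealed conditioning, and that the relevant comparison of survival probabilities is uniform over $t\in[0,K]$ and holds $\bP$-almost surely (not just in expectation). This is exactly where \cref{thm:ConvergenceOfPartitionFunction} is used — it guarantees $n^\beta \Pt(Y_{n(1+t)}>0)\to \CC \bW (1+t)^{-\beta}$ uniformly on compacts in $t$, $\bP$-a.s., so the Radon--Nikodym factor relating the quenched conditioned law to $Y_n$ independent annealed clusters is bounded above and below by $\bP$-a.s.\ finite, positive constants depending only on $K$ and $\bT$. Once this comparison is in place, both tightness criteria reduce to their annealed counterparts, which are known. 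I would not expect the remaining estimates to require more than the domination and mixing arguments sketched above.
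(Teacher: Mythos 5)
There is a genuine gap in your proposal, and it lies at its very core: the claim that, conditioned on $\{Y_n>0\}$ and on the value of $Y_n$, the process $(n^{-\beta}Y_{n(1+t)})_{t\ge 0}$ under $\Pt$ is a sum of $Y_n$ i.i.d.\ \emph{annealed} critical clusters (up to a bounded Radon--Nikodym factor coming from ratios of survival probabilities). Under the quenched law $\Pt$ the tree $\bT$ is fixed, so the clusters emanating from the $Y_n$ surviving vertices at level $n$ are independent percolation clusters on \emph{specific, deterministic} subtrees of $\bT$; they are i.i.d.\ annealed clusters only after additionally averaging over $\bP$ (i.e.\ conditionally on $\Fcal_n$ under the annealed measure $\P=\bP\circ\Pt$, not under $\Pt$). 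There is no $\bP$-a.s.\ bounded density between the quenched law of the future process and the annealed one — the boundedness of the ratio $\Pt(Y_{n(1+t)}>0)/\Pt(Y_n>0)$ controls only the relative weights of the conditioning events, not the conditional law of the whole path. If such a bounded pathwise comparison existed, Theorems \ref{thm:ConvergenceOfLaplaceTransform} and \ref{thm:ConvergenceOfBP} would follow almost immediately from their annealed counterparts, which is not the case. To make your decomposition rigorous you would need quenched concentration estimates (of the type proved in \cref{lem:variance} and \cref{lem:mgf starting from a convergence 1}, via second moments and Borel--Cantelli along a subsequence) for the supremum and oscillation functionals, and your proposal supplies none.

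The paper's actual proof sidesteps all of this with a soft argument: since $\bE[\Pt(A)]=\P(A)$ for percolation events $A$, one shows that the $\bP$-expectations of the quantities
$\lim_{C\to\infty}\limsup_{n}\Pt\rk{\|\tY^{(n,K)}\|_\infty\ge C}$ and $\lim_{\delta\downarrow 0}\limsup_{n}\Pt\rk{\omega_\delta'(\tY^{(n,K)})>\eps}$
are zero, by a contradiction argument using Fatou's lemma, monotone convergence, and the tightness of the \emph{annealed} sequence; a nonnegative random variable with zero expectation vanishes a.s., which is exactly conditions (a) and (b) of \cref{prop:billingsley tightness criteria}. I would encourage you to look for an argument of this ``average the quenched probability and invoke the annealed result'' type whenever the target statement is a one-sided bound on a quenched probability, rather than attempting a pathwise identification of the quenched and annealed laws.
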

\begin{proof}
    If $A_K$ is the subset of $\Omega$ where either (a) or (b) of \cref{prop:billingsley tightness criteria} fail for $K$, then we have that $A_K$ is an increasing function of $K$. As a result, in order to prove this proposition it is sufficient to show that, for every $K>0$, the following hold $\bP$-almost surely.
    \begin{enumerate}[(a)]
        \item $\lim_{C \to \infty} \limsup_{n \to \infty} \Pt \rk{||\tY^{(n,K)}||_{\infty} \geq C} = 0$.
        \item For all $\eps>0$, $\lim_{\delta \downarrow 0} \limsup_{n \to \infty} \Pt \rk{ \omega_{\delta}'(\tY^{(n,K)}) > \eps} = 0$.
    \end{enumerate}

    We claim that, thanks to the convergence of the annealed process, the following two statements hold.
       \begin{enumerate}[(a$'$)]
        \item $\bE \ek{\lim_{C \to \infty} \limsup_{n \to \infty} \Pt \rk{||\tY^{(n,K)}||_{\infty} \geq C}} = 0$.
        \item For all $\eps>0$, $\bE \ek{\lim_{\delta \downarrow 0} \limsup_{n \to \infty}  \Pt \rk{ \omega_{\delta}'(\tY^{(n,K)}) > \eps}} = 0$.
    \end{enumerate}
    Note that the expectation of a sequence of non-negative random variables can only be zero if the random variables in question are zero almost surely. Hence (a$'$) and (b$'$) directly imply that (a) and (b) are satisfied, $\bP$-almost surely, and it just remains to establish (a$'$) and (b$'$).
    
    To this end, set $x_C = \limsup_{n \to \infty} \Pt \rk{||\tY^{(n,K)}||_{\infty} \geq C}$. Suppose for a contradiction that the expectation in (a$'$) is not $0$, and is in fact lower bounded by $2c>0$. Then $\bP \rk{\lim_{C \to \infty} x_C \geq c} \geq c$.
    
   Moreover, on the event $\{x_C > c\}$, we have by Fatou's lemma that $\Pt \rk{||\tY^{(n,K)}||_{\infty} \geq C \text{ i.o.}} \geq c$. Since this holds for all $C$ on the event in question (and the event is monotone in $C$), it follows from monotone convergence that $\Pt \rk{\forall C>0 : ||\tY^{(n,K)}||_{\infty} \geq C \text{ i.o.}} \geq c$. However, on the event $\{\forall C>0 : ||\tY^{(n,K)}||_{\infty} \geq C \text{ i.o.}\}$ it follows from \cite[Theorem 12.3]{billingsley1968convergence} that $\tY^{(n,K)}$ cannot belong to a compact set. Since this event occurs with $\P$-probability at least $c^2$, this would imply that the annealed law is not tight, which is a contradiction. 

    Similarly, suppose for a contradiction that there exists $\eps>0$ such that the expectation in (b$'$) is lower bounded by $2c$. Set $y_{\delta}:= \limsup_{n \to \infty}  \Pt \rk{ \omega_{\delta}'(\tY^{(n,K)}) > \eps}$. Then necessarily $\bP \rk{\lim_{\delta \to 0} y_{\delta}>c}>c$, and moreover on the event $\{y_{\delta} > c\}$, we have by Fatou's lemma that $\Pt \rk{ \omega_{\delta}'(\tY^{(n,K)}) > \eps \text{ i.o.}} >c$.\\
    By monotone convergence, it follows that $\Pt \rk{\forall \delta>0: \omega_{\delta}'(\tY^{(n,K)}) > \eps \text{ i.o.}} >c$, and moreover on the event $\{\forall \delta>0: \omega_{\delta}'(\tY^{(n,K)}) > \eps \text{ i.o.}\} $, it follows from \cite[Theorem 12.3]{billingsley1968convergence} that $\tY^{(n,K)}$ cannot belong to a compact set. Since this event occurs with $\P$-probability at least $c^2$, this would similarly imply that the annealed law is not tight, which is a contradiction. 
\end{proof}

\subsection{Convergence of the finite dimensional distributions along a subsequence}
Throughout this section, we abbreviate $Y_t\hk{n}=n^{-\beta}Y_{\floor{tn}}$. As in the previous subsections, the strategy will be to show that the finite dimensional marginals converge almost surely along a subsequence, and then lift this to convergence along the full sequence using the continuity event $F_n$ that we used in \cref{lem:lift to whole sequence}.

We start with a variance bound that will be useful for some technical estimates.

\begin{lemma}[A second moment estimate]\label{lem:variance bound}
    For any $\e>0$, as $n \to \infty$
    \begin{equation}
        \bE\ek{\Pt\rk{Y_n>0}^2}=\Ocal\rk{n^{-\beta\alpha+\e}}\, .
    \end{equation}
\end{lemma}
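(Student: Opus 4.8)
The plan is to bound $\Pt(Y_n>0)$ by decomposing the survival event through an intermediate level $m=m(n)$ chosen so that $\mu^m\ge n$ while still $m\ll n$; concretely $m=\lceil\log n/\log\mu\rceil$ works. For $v\in\bT_m$ write $\{v\lrd\bT_n\}$ for the event that the subtree of $\bT$ rooted at $v$ survives under percolation for a further $n-m$ generations. Since $\bT$ is a tree, $\{Y_n>0\}=\bigcup_{v\in\bT_m}\bigl(\{\root\leftrightarrow v\}\cap\{v\lrd\bT_n\}\bigr)$, and for each $v$ the two events depend on disjoint sets of edges, hence are $\Pt$-independent, with $\Pt(\root\leftrightarrow v)=\mu^{-m}$. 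A union bound therefore gives the deterministic (given $\bT$) inequality
\[
\Pt(Y_n>0)\le\mu^{-m}\sum_{v\in\bT_m}\Pt(v\lrd\bT_n).
\]

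Next I would compute a conditional second moment. Conditionally on $\Fcal_m$ the variables $(\Pt(v\lrd\bT_n))_{v\in\bT_m}$ are i.i.d., take values in $[0,1]$, and have mean $\pi_{n-m}:=\P(Y_{n-m}>0)\sim\CC n^{-\beta}$ (using $m\ll n$ and \eqref{eqn:slack prob convergence}); bounding the second moment of each term by its mean and using independence yields
\[
\bE\!\left[\bigl(\textstyle\mu^{-m}\sum_{v\in\bT_m}\Pt(v\lrd\bT_n)\bigr)^2\,\middle|\,\Fcal_m\right]\le\mu^{-m}\bW_m\pi_{n-m}+\bW_m^2\pi_{n-m}^2.
\]
The obstacle now is that $\bE[\bW_m^2]=\infty$ when $\alpha<2$, so this cannot be integrated directly; instead I split the expectation of $\Pt(Y_n>0)^2$ according to whether $\bW_m\le1/\pi_{n-m}\asymp n^\beta$ or not. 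On the first event I integrate the displayed bound, using $\bE[\bW_m]=1$ for the first term and, for the second, the elementary estimate $\bE[\bW_m^2\mathbbm{1}\{\bW_m\le K\}]\le C_r K^{2-r}$ (valid for $K\ge1$ and every $r\in(1,\alpha)$, uniformly in $m$), which follows from $\P(\bW_m>x)\le\bE[\overline{\bW}^r]x^{-r}$ together with the finiteness of $\bE[\overline{\bW}^r]$ for $r<\alpha$ already used in the proof of \cref{lem:connection single}. On the complementary event I simply bound $\Pt(Y_n>0)\le1$, so that contribution is at most $\P(\bW_m>1/\pi_{n-m})\le\bE[\overline{\bW}^r]\pi_{n-m}^r$. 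Altogether, for every $r\in(1,\alpha)$,
\[
\bE[\Pt(Y_n>0)^2]\le\mu^{-m}\pi_{n-m}+C_r'\,\pi_{n-m}^r.
\]
Since $\mu^m\ge n$ and $\alpha\beta=1+\beta$, the first term is $O(n^{-\alpha\beta})$, and since $\pi_{n-m}^r=O(n^{-\beta r})$, choosing $r<\alpha$ close enough to $\alpha$ that $\beta(\alpha-r)<\e$ makes the second term $O(n^{-\beta\alpha+\e})$; this gives the claim.

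The only genuine difficulty is this heavy-tail point. A cruder argument — for instance Cauchy--Schwarz on $\sum_{v\in\bT_m}\Pt(v\lrd\bT_n)$ without truncation, or passing through a level with $\mu^m$ of smaller order — would lose a factor and produce a strictly worse exponent, so one really needs both to work at a level with $\mu^m\gtrsim n$ (to control the diagonal term $\mu^{-m}\pi_{n-m}$) and to truncate $\bW_m$ at the precise scale $n^\beta$; the remaining steps are routine.
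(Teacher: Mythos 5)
Your proof is correct. It shares the paper's overall skeleton — pass to an intermediate level $m$ with $\mu^m$ polynomial in $n$, union-bound $\Pt(Y_n>0)$ by $\mu^{-m}\sum_{v\in\bT_m}\Pt(v\lrd\bT_n)$, and exploit the conditional i.i.d.\ structure of the subtree survival probabilities given $\Fcal_m$ together with \eqref{eqn:slack prob convergence} — but the way you handle the heavy tail of $\bW_m$ is genuinely different. The paper first replaces the second moment by the $p$-th moment with $p=\alpha-\delta$ (using $\Pt(Y_n>0)\le1$), takes $\mu^m\approx n^{\beta}$, splits the sum into its conditional mean $X_1$ plus fluctuation $X_2$, controls $\bE[X_1^p]$ via the $\mathrm{L}^p$-boundedness of $\bW_m$, and controls $\bE[|X_2|^p]$ via the Chebyshev tail bound \eqref{eqn:Cheb BC 1}. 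You instead keep the genuine second moment, compute it conditionally on $\Fcal_m$ (diagonal term $\mu^{-m}\bW_m\pi_{n-m}$ plus off-diagonal term $\bW_m^2\pi_{n-m}^2$), and neutralise the infinite second moment of $\bW_m$ by truncating at the scale $K=1/\pi_{n-m}\asymp n^{\beta}$, paying $\P(\bW_m>K)\lesssim \pi_{n-m}^r$ on the complement; the exponent count ($\mu^{-m}\pi_{n-m}=O(n^{-\alpha\beta})$ since $\alpha\beta=1+\beta$, and $\pi_{n-m}^r=O(n^{-\beta\alpha+\e})$ for $r$ close to $\alpha$) is right. Your approach is arguably more transparent about where the loss of $\e$ comes from (the tail of $\bW$), while the paper's $p$-th-moment trick avoids any truncation bookkeeping. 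One small caveat on your closing remark: the paper's level choice $\mu^m\approx n^{\beta}$ also satisfies $\mu^m\gtrsim n$ (since $\beta\ge1$), and the paper gets by without truncating $\bW_m$ at all, so neither of the two ingredients you flag as indispensable is literally forced — they are the price of insisting on a direct second-moment computation.
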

\begin{proof}
Fix $\delta>0$ and write $p=\alpha-\delta$. We bound the second moment by the $p^{th}$ moment:
\begin{equation}\label{eqn:variance alpha moment comp}
     \bE\ek{\Pt\rk{Y_n>0}^2}\le \bE\ek{\Pt\rk{Y_n>0}^p}\, .
\end{equation}
We now set $m = \lceil \frac{\beta}{\log \mu} \log n \rceil$. Note that, by a union bound, we have that
\begin{multline}
    \Pt\rk{Y_n>0}\le \sum_{v\in\bT_m}\mu^{-m}\Pt\rk{v\lrd \bT_n}\\
    =\sum_{v\in\bT_m}\mu^{-m}\P\rk{\root\lrd \bT_{n-m}}+\sum_{v\in\bT_m}\mu^{-m}\ek{\Pt\rk{v\lrd \bT_n}-\P\rk{\root\lrd \bT_{n-m}}}:=X_1+X_2\, ,
\end{multline}
where $X_1$ and $X_2$ denote the first and second sums respectively. Now note that
\begin{equation}
    \bE\ek{X_1^p}\le \Ocal\rk{(n-m)^{-\beta p}}\bE\ek{\bW_m^p}=\Ocal\rk{n^{-\beta p}}, 
\end{equation}
by the tower property, \eqref{eqn:slack prob convergence} and since $\rk{\bW_m}_m$ converges in $\mathrm{L}^p(\bP)$ when $p< \alpha$.

Furthermore, by Equation \eqref{eqn:Cheb BC 1} and the tower property, we have that there exists $C<\infty$ such that for any $x>0$ (note that although we had set $m= \lfloor  \frac{(1+\e)}{\rk{\alpha-1}\log\mu}\log n \rfloor$ in the proof of \cref{lem:variance}, the proof up until \eqref{eqn:Cheb BC 1} did not use this, so it also holds for the present value of $m$)
\begin{equation}
    \bP\rk{|X_2|>x}\le C\frac{n^{-\beta}}{x^2 \mu^m}\, .
\end{equation}
In particular, we have for any $x>0$ that
\begin{equation}
    \bP\rk{|X_2|>xn^{-\beta}}\le C\frac{n^{\beta}}{x^2 \mu^m} \leq  \frac{C}{x^2}.
\end{equation}
Here the final inequality follows by our choice of $m$. Hence, the $p$-moment of $|X_2|$ exists and is bounded by $\Ocal\rk{n^{-\beta p }}$. To tie up, note that
\begin{align}
    \bE\ek{\Pt\rk{Y_n>0}^p} = \bE\ek{(X_1+X_2)^p} \leq 2^{p}(\bE\ek{X_1^p} + \bE\ek{|X_2|^p} ) = \Ocal\rk{n^{-\beta p }}= \Ocal\rk{n^{-\beta \alpha+\e }},
\end{align}
for some $\e>0$ depending on $\delta$, which can be as small as we want to. Substituting back into \eqref{eqn:variance alpha moment comp} concludes the proof.
\end{proof}

\begin{remark}
    We note that the above lemma allows for a subsequent improvement of the estimate made in Equation \eqref{eqn:first variance bound}. 
\end{remark}

We are now ready to prove convergence of the marginals along an appropriate subsequence. Recall the definition of the subsequence $(n_k)_{k \geq 1}$ that we defined in \cref{sctn:notation} (in fact it will only be important in this section that $n_k$ grow slower than $k^2$).

The convergence follows from \cref{lem:variance bound} and a simple application of Chebyshev's inequality. Before giving the proof, we remind the reader of the following formula for the variance of a product of independent variables $X_1,\ldots,X_M$:
    \begin{equation}\label{eqn:variance formula}
\Var{\prod_{i=1}^MX_i}=\prod_{i=1}^M\rk{\Var{X_i}+\bE\ek{X_i}^2}-\prod_{i=1}^M\bE\ek{X_i}^2\, .
    \end{equation}

\begin{lemma}[Pointwise convergence of the Laplace transform]\label{lem:mgf starting from a convergence 1}
Take any $a>0, t>0, s>0$, and for each $n \geq 1$ set $a_n = n^{-\beta} \lfloor an^{\beta} \rfloor$. Then following holds for $\bP$-almost every tree $\bT$. For each $n \geq 1$ set
\[
Q^{n,\bT}_{a,s,t}(\theta) = \Et \ek{\exp\{-\theta Y_{t}\hk{n}\} \Big| Y_s\hk{n}=a_n }
\]
and let $Q_{a,s,t}(\theta)$ denote the analogous quantity in the limit of the annealed model, i.e. as in \eqref{eqn:annealed Poisson mgf}. Then, for all $\theta>0$,
\begin{equation}
    Q^{n_k,\bT}_{a_n,s,t}(\theta) \to Q_{a,s,t}(\theta)
\end{equation}
as $k \to \infty$.
\end{lemma}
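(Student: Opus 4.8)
The plan is to condition on the subtree structure between level $sn_k$ and level $tn_k$, and to exploit the decomposition of the cluster at level $tn_k$ into independent contributions coming from the (random number of) vertices at level $sn_k$ that are connected to the root. Write $\OpenEd_{sn_k}$ for the set of such vertices; on the event $\{Y_s\hk{n_k} = a_{n_k}\}$ we have exactly $N := \lfloor a_{n_k} n_k^{\beta} \rfloor = \lfloor a n_k^{\beta}\rfloor(1+o(1))$ of them. Conditionally on $\Fcal_{sn_k}$ and on the configuration up to level $sn_k$, each such vertex $v$ emits an independent subtree $\bT_{sn_k}\hk{v}$, and $Y_{tn_k}\hk{v}$ — the number of level-$tn_k$ descendants of $v$ connected to $v$ — has, under $\bP\circ\Pt$, exactly the annealed law of the generation size at time $l := (t-s)n_k$ of the annealed critical cluster. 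Hence
\[
Q^{n_k,\bT}_{a_{n_k},s,t}(\theta) = \prod_{v \in \OpenEd_{sn_k}} \Et\ek{\ex^{-\theta n_k^{-\beta}Y_{tn_k}\hk{v}} \,\middle|\, Y_s\hk{n_k}=a_{n_k}},
\]
a product of $N$ i.i.d. (given $\Fcal_{sn_k}$) random variables, each with $\bP$-mean equal to $\E\ek{\ex^{-\theta n_k^{-\beta}Y_l}}$ where $Y_l$ is the (unconditioned) annealed generation size at time $l$. Writing $u_l(\theta) := \E\ek{\ex^{-\theta n_k^{-\beta}Y_l}} = \P(Y_l>0)\phi_l(\theta n_k^{-\beta}\cdot) $-type quantity, the product of the means is $u_l(\theta)^N$, and by \eqref{eqn:slack prob convergence}, \eqref{eqn:slack Laplace convergence} and the definition \eqref{eqn:annealed Poisson mgf} one checks $u_l(\theta)^N \to Q_{a,s,t}(\theta)$ as $k\to\infty$: indeed $N(1-u_l(\theta)) \to a\CC(t-s)^{-\beta}(1-\phi(\theta(t-s)^{\beta}))$, which is exactly $-\log Q_{a,s,t}(\theta)$.

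It then remains to show that $Q^{n_k,\bT}_{a_{n_k},s,t}(\theta)$ concentrates around its conditional-on-$\Fcal_{sn_k}$ mean, almost surely along the subsequence. For this I would use the variance formula \eqref{eqn:variance formula} for a product of $N$ independent variables $X_i := \Et[\ex^{-\theta n_k^{-\beta}Y_{tn_k}\hk{v_i}}\mathbbm 1\{v_i \lrd \bT_{tn_k}\}]$ (or rather the normalised versions): since each $X_i \in [0,1]$ and $\bE[X_i]$ is of order $l^{-\beta} = ((t-s)n_k)^{-\beta}$, one has $\Var(X_i) \le \bE[X_i^2] \le \bE[X_i]$, so that $\Var(X_i) + \bE[X_i]^2 = \bE[X_i](1+o(1))$, and \eqref{eqn:variance formula} gives
\[
\Var\!\Big(\prod_{i=1}^N X_i \,\Big|\, \Fcal_{sn_k}\Big) = \bE[X_1]^N\big((1+\Var(X_1)/\bE[X_1]^2)^N - 1\big)\bE[X_1]^N \;\lesssim\; N\,\frac{\Var(X_1)}{\bE[X_1]^2}\,\bE[X_1]^{2N},
\]
and $N\Var(X_1)/\bE[X_1]^2 = \Ocal(n_k^{\beta} \cdot l^{-\beta}/l^{-2\beta}) = \Ocal(n_k^{\beta} l^{\beta})$, which is polynomial in $n_k$. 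Crucially, though, one should instead bound the relative variance of the product: dividing through, $\Var(\prod X_i \mid \Fcal_{sn_k})/(\bE[\prod X_i \mid \Fcal_{sn_k}])^2 = (1+\Var(X_1)/\bE[X_1]^2)^N - 1$, and since $\Var(X_1)/\bE[X_1]^2 = \Ocal(\bE[X_1^2]/\bE[X_1]^2) = \Ocal(l^{\beta})$ is too large, one must first improve the second-moment bound $\bE[X_1^2] = \bE[\Pt(v\lrd\bT_{tn_k})^2]$ using \cref{lem:variance bound}, which gives $\bE[X_1^2] = \Ocal(l^{-\beta\alpha + \e})$, hence $\Var(X_1)/\bE[X_1]^2 = \Ocal(l^{-\beta\alpha+\e}/l^{-2\beta}) = \Ocal(l^{-\beta(\alpha-2)+\e}) = \Ocal(l^{\beta(2-\alpha)+\e})$; since $\alpha>1$ we have $2-\alpha < 1$, so $N\cdot l^{\beta(2-\alpha)+\e} = \Ocal(n_k^{\beta} l^{\beta(2-\alpha)+\e}) = \Ocal(n_k^{\beta + B\beta(2-\alpha)+\e'})$ with $B = (A-1)/A < 1$, and this is $o(n_k^{2\beta})$ — comfortably enough that, after multiplying by the $\Ocal(n_k^{2\beta})$ factor appearing when we switch from $\E[X_1]^N$ to the actual mean, Chebyshev plus Borel--Cantelli along $(n_k)_{k\ge1}$ (using $n_k \sim k^A$ with $A$ large, or indeed any polynomial growth slower than a suitable power) yields almost sure convergence of $Q^{n_k,\bT}_{a_{n_k},s,t}(\theta)$ to $u_l(\theta)^N \to Q_{a,s,t}(\theta)$. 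Finally, taking a countable dense set of $\theta$ and using monotonicity of $\theta \mapsto Q^{n_k,\bT}_{a_{n_k},s,t}(\theta)$ together with continuity of the limit $Q_{a,s,t}$ upgrades to all $\theta>0$ simultaneously, $\bP$-a.s.

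The main obstacle is the concentration step: the naive second-moment estimate $\bE[X_1^2]\le\bE[X_1]$ makes the relative variance of the $N$-fold product of order $N l^{-\beta}$, which is polynomially \emph{large} in $n_k$ (since $N$ grows like $n_k^{\beta}$ while $l^{-\beta}$ decays only like $n_k^{-\beta B}$ with $B<1$), so Chebyshev fails outright. The fix is to feed in the sharper bound $\bE[\Pt(v\lrd\bT_{tn_k})^2] = \Ocal(l^{-\beta\alpha+\e})$ from \cref{lem:variance bound}, which is exactly why that lemma was proved first; with it in hand the relative variance becomes $\Ocal(n_k^{\beta}l^{\beta(2-\alpha)+\e})$, a small polynomial power of $n_k$ (using $\alpha>1$ and $B<1$), and summability along the subsequence $(n_k)$ is immediate. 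A minor additional technicality is handling the discretisation $a_n = n^{-\beta}\lfloor an^{\beta}\rfloor$ and the fact that we are conditioning on $\{Y_s\hk{n_k}=a_{n_k}\}$ rather than on the number of open level-$sn_k$ vertices directly, but since $a_{n_k}n_k^{\beta}$ is an integer by construction these cause no real difficulty — one simply notes $N = a_{n_k}n_k^\beta$ exactly on that event.
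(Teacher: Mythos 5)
Your overall architecture is the same as the paper's: condition on $\Fcal_{\lfloor sn\rfloor}$, decompose $Y^{(n)}_t$ into $N=\lfloor an^\beta\rfloor$ independent subtree contributions, show the conditional $\bP$-mean of the product of per-vertex Laplace transforms converges to $Q_{a,s,t}(\theta)$, and then get almost sure concentration via \eqref{eqn:variance formula}, \cref{lem:variance bound}, Chebyshev and Borel--Cantelli along $(n_k)$, extending to all $\theta$ by monotonicity. The mean computation and the identification of \cref{lem:variance bound} as the key input are both correct.

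The concentration step, however, contains a genuine error: you apply the product-variance formula to the \emph{wrong factors}. The quantity to control is $\prod_{i=1}^N Z_i$ with $Z_i=\Et\bigl[\ex^{-\theta n^{-\beta}Y^{(v_i)}_{(t-s)n}}\bigr]$, where each $Z_i=1-O(n^{-\beta})$ is close to $1$ (the summand equals $1$ when the subtree dies out). You instead take $X_i=\Et\bigl[\ex^{-\theta n^{-\beta}Y^{(v_i)}_{(t-s)n}}\mathbbm{1}\{v_i\lrd\bT_{tn}\}\bigr]$, whose mean is of order $l^{-\beta}$. Then $\prod_i X_i$ is not the conditional Laplace transform (its mean $\bE[X_1]^N\asymp l^{-\beta N}$ is exponentially small in $N\asymp n^\beta$, whereas the target is of order $1$; there is no ``$O(n_k^{2\beta})$ factor'' relating them). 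Moreover your bound $(1+r)^N-1\lesssim Nr$ with $r=\Var(X_1)/\bE[X_1]^2=O(l^{\beta(2-\alpha)+\e})$ is invalid: for $\alpha<2$ one has $r\to\infty$, so $(1+r)^N-1\approx r^N$ is exponentially large and relative concentration of $\prod_i X_i$ fails outright. (A secondary slip: here $l=(t-s)n_k\asymp n_k$ since $t-s$ is a fixed constant, so the exponent $B=(A-1)/A$ from the earlier sections, where the gap $n_k-n$ was sublinear, does not apply.) The repair is to run your own argument on the near-one factors: $\Var(Z_i)=\Var(1-Z_i)\le\bE\bigl[\Pt(v_i\lrd\bT_{tn})^2\bigr]=O(n^{-\alpha\beta+\e})$ by \cref{lem:variance bound}, and then \eqref{eqn:variance formula} together with $(x+y)^m-y^m\le mx(x+y)^{m-1}$ gives $\Var(\prod_iZ_i\mid\Fcal_{\lfloor sn\rfloor})=O(N n^{-\alpha\beta+\e})=O(n^{-1+\e})$, which is summable along $(n_k)$; this is exactly what the paper does.
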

\begin{proof}
In the following proof we will abuse notation slightly by writing $t-s$ in place of $\frac{1}{n}\left( \lfloor t n \rfloor - \lfloor sn \rfloor \right)$, and $(t-s)n$ in place of $\lfloor t n \rfloor - \lfloor sn \rfloor$ (any resulting errors can be absorbed into the $(1+o(1))$ factors that are already present in the proof).

Fix some $s, t, a, \theta>0$. We condition on $\bT_{\lfloor sn \rfloor}$ and write
\[
S = \sum_{v \in \bT_{\lfloor sn \rfloor} \colon 0\lrd v}Y\hk{v}_{(t-s)n},
\]
where for $v \in \bT_{\lfloor sn \rfloor}$ and $\ell \in \N$ we write $Y\hk{v}_{\ell}$ to denote vertices in generation $\ell$ in the subtree rooted at $v$.

For each $n \geq 1$ recall that $\Fcal_n$ is the $\sigma$-algebra generated by possible realisations of the first $n$ levels of $\bT$, so that for the set $\OpenEd_{\lfloor sn \rfloor} := \{v \in \bT_{\lfloor sn \rfloor} \colon 0\lrd v\}$, the probability $\Pt\rk{\OpenEd_{\lfloor sn \rfloor}=A|\Ypron{s}=a_n}$ is $\Fcal_{\floor{sn}}$-measurable, for any $A\subset \bT_{\lfloor sn \rfloor}$. 
We now compute the moment-generating function $\Et\ek{\ex^{-\theta S}|\Ypron{s}=a_n}$ conditionally on $\Fcal_{\floor{sn}}$.

Write $X_n=\Et\ek{\ex^{-\theta S}|\Ypron{s}=a_n}$. Note that $\bE\ek{X_n|\Fcal_{\floor{sn}}}$ is given by
    \begin{equation}
       \ek{1-\P\rk{\Ypron{t-s}>0}\rk{1-\E\ek{\ex^{-\theta\Ypron{t-s}}|\Ypron{t-s}>0}}}^{a_nn^\beta}=\ex^{a (t-s)^{-\beta}\CC\rk{\phi(\theta  (t-s)^{\beta})-1}}\rk{1+o(1)}\, ,
    \end{equation}
    where the final equality holds by \eqref{eqn:slack Laplace convergence}. On the other hand, we have from \eqref{eqn:variance formula} that
    \begin{equation}\label{eqn:variance formula spec}
        \Var{X_n|\Fcal_{\floor{sn}}}=\ek{\Var{\Et\ek{\ex^{-\theta\Ypron{t-s}}}}+\E\ek{\ex^{-\theta\Ypron{t-s}}}^2}^{a_nn^\beta}-\E\ek{\ex^{-\theta\Ypron{t-s}}}^{2a_nn^\beta}\, .
    \end{equation}
Now take some small $\epsilon > 0$. We note that, by \cref{lem:variance bound}, there exists $C<\infty$ such that for all $n \geq 1$,
    \begin{align}\label{eqn:variance Laplace}
    \begin{split}
        \Var{\Et\ek{\ex^{-\theta\Ypron{t-s}}}}&=\Var{1-\Et\ek{\ex^{-\theta\Ypron{t-s}}}}\\
        &=\Var{\Pt\rk{\Ypron{t-s}>0}\left(1-\Et\ek{\ex^{-\theta\Ypron{t-s}}|\Ypron{t-s}>0}\right)}
        \\
        &\le \bE\ek{\Pt\rk{\Ypron{t-s}>0}^2} \le C n^{-\beta\alpha+\e}\, .
    \end{split}
    \end{align}
    Moreover, by \eqref{eqn:slack prob convergence} and \eqref{eqn:slack Laplace convergence}, we have that
    \begin{align}\label{eqn:exp conv}
     \E\ek{\ex^{-\theta \Ypron{t-s} }} = 1 - \CC (t-s)^{-\beta} n^{-\beta}(1-\phi (\theta (t-s)^{\beta})) (1+o(1)).
    \end{align}
Now recall that for $x, y >0$ and $m \geq 1$ we have that $(x+y)^m-y^m \leq mx(x+y)^{m-1}$. Applying this to \eqref{eqn:variance formula spec} and substituting the estimates from \eqref{eqn:variance Laplace} and \eqref{eqn:exp conv}, we deduce that there exist $c'>0$ and other $c, C, C'<\infty$, possibly depending on $\alpha, \theta, s$ and $t$ such that
    \begin{align}
    \begin{split}
        \Var{X_n|\Fcal_{\floor{sn}}} &\le a_nn^{\beta} \Var{\Et\ek{\ex^{-\theta\Ypron{t-s}}}} \ek{\Var{\Et\ek{\ex^{-\theta\Ypron{t-s}}}}+\E\ek{\ex^{-\theta\Ypron{t-s}}}^2}^{a_nn^\beta - 1}
        \\
        &\leq C n^{-\beta(\alpha-1)+\e} (1+cn^{-\alpha \beta+\epsilon} - c'n^{-\beta})^{a_n n^{\beta}-1} \\
        &\leq C' n^{-\beta(\alpha-1)+\e} = C' n^{-1+\e}.
    \end{split}
    \end{align}
        Hence Chebyshev's inequality shows that $X_{n} \to \ex^{a (t-s)^{-\beta}\CC\rk{\phi(\theta  (t-s)^{\beta})-1}}=Q_{a,s,t}(\theta)$ almost surely along the subsequence $(\nk)_{k \geq 1}$
        
        This currently proves the result for any fixed $\theta$, $\bP$-almost surely. Note that we can extend this to hold simultaneously for all $\theta>0$ using the same argument as in \cref{CorollaryConvergenceCondtionedSubsequence}.
\end{proof}

In fact we need to strengthen \cref{lem:mgf starting from a convergence 1} to the following.

\begin{lemma}[Subsequential convergence of the Laplace transform]\label{lem:mgf starting from a convergence}
 For $\bP$-almost every tree $\bT$ the following holds for almost every $a>0$ and every $t>s>0, \theta>0$. For each $n \geq 1$ again set $a_n = n^{-\beta} \lfloor an^{\beta} \rfloor$ and
\[
Q^{n,\bT}_{a,s,t}(\theta) = \Et \ek{\exp\{-\theta Y_{t}\hk{n}\} \Big| Y_s\hk{n}=a_n  }.
\]
Let $Q_{a,s,t}(\theta)$ denote the analogous quantity in the annealed model, i.e. as in \eqref{eqn:annealed Poisson mgf}. Then
\begin{equation}
    Q^{n_k,\bT}_{a,s,t}(\theta) \to Q_{a,s,t}(\theta)
\end{equation}
as $k \to \infty$.
\end{lemma}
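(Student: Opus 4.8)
The plan is to rerun the proof of \cref{lem:mgf starting from a convergence 1}, the only difference being that we do not freeze the starting value $a$ but integrate it out against a summable weight. This is more or less forced: because generation sizes are not monotone in the generation, the map $a\mapsto Q^{n,\bT}_{a,s,t}(\theta)$ is not monotone in general (a two-level example already breaks this), so the almost sure convergence for each rational $a$ provided by \cref{lem:mgf starting from a convergence 1} cannot be sandwiched up to arbitrary $a$. Integrating over $a$, on the other hand, converts ``for each $a$, $\bP$-almost surely'' into ``$\bP$-almost surely, for Lebesgue-almost every $a$'', which is exactly the assertion.

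Fix rational $s<t$ and $\theta>0$ and, following \cref{lem:mgf starting from a convergence 1}, write $X_n(a)=Q^{n,\bT}_{a,s,t}(\theta)$ and $\bar X_n(a)=\bE\ek{X_n(a)\mid\Fcal_{\floor{sn}}}$ (for fixed $a$ and all large $n$ the conditioning event $\{Y_{\floor{sn}}=\floor{an^\beta}\}$ has positive $\Pt$-probability, since $\bT$ has no leaves, so this is well defined). Two facts from the proof of \cref{lem:mgf starting from a convergence 1} will be reused: first, $\bar X_n(a)=\big(\E\ek{\ex^{-\theta n^{-\beta}Y_{(t-s)n}}}\big)^{\floor{an^\beta}}$, which by \eqref{eqn:slack prob convergence}, \eqref{eqn:slack Laplace convergence} and \eqref{eqn:annealed Poisson mgf} converges to $Q_{a,s,t}(\theta)$ for \emph{every} $a>0$; and second, the variance bound, which via \eqref{eqn:variance formula} and \cref{lem:variance bound} gives $\Var{X_n(a)\mid\Fcal_{\floor{sn}}}\le \floor{an^\beta}\,\Var{\Et\ek{\ex^{-\theta n^{-\beta}Y_{(t-s)n}}}}\le C\,a\,n^{\beta(1-\alpha)+\e}=C\,a\,n^{-1+\e}$, using $\beta(\alpha-1)=1$. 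The crucial feature is the \emph{linear} dependence on $a$: putting $R_n:=\int_0^\infty\big(X_n(a)-\bar X_n(a)\big)^2\ex^{-a}\,\mathrm{d}a$ and applying Tonelli together with the conditional variance identity gives $\bE\ek{R_n}=\int_0^\infty\bE\ek{\Var{X_n(a)\mid\Fcal_{\floor{sn}}}}\ex^{-a}\,\mathrm{d}a\le Cn^{-1+\e}\int_0^\infty a\,\ex^{-a}\,\mathrm{d}a=C'n^{-1+\e}$.

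Next I would pass to the subsequence: since $n_k\sim k^A$ with $A>1$, for $\e$ small $\sum_k\bE\ek{R_{n_k}}\le C'\sum_k n_k^{-1+\e}<\infty$, so by Tonelli again $\bP$-almost surely $\int_0^\infty\big(\sum_k(X_{n_k}(a)-\bar X_{n_k}(a))^2\big)\ex^{-a}\,\mathrm{d}a<\infty$; hence for Lebesgue-almost every $a>0$ one has $X_{n_k}(a)-\bar X_{n_k}(a)\to0$, and since $\bar X_{n_k}(a)\to Q_{a,s,t}(\theta)$ for every $a$ this yields $Q^{n_k,\bT}_{a,s,t}(\theta)\to Q_{a,s,t}(\theta)$ for a.e.\ $a$. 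Intersecting the corresponding $\bP$-almost sure events over all rational $s<t$ and rational $\theta>0$, and taking the (still Lebesgue-null) union of the exceptional $a$-sets, produces one almost sure event and one null set $\Ncal_\bT$ off which the convergence holds for all rational $s<t,\theta>0$; it then extends to all real $\theta>0$ because $\theta\mapsto Q^{n,\bT}_{a,s,t}(\theta)$ is non-increasing and $\theta\mapsto Q_{a,s,t}(\theta)$ is continuous, by sandwiching $\theta$ between rationals exactly as in \cref{CorollaryConvergenceCondtionedSubsequence}.

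I expect the main obstacle to be quantifier bookkeeping rather than any estimate: every analytic ingredient already appears in \cref{lem:mgf starting from a convergence 1} and \cref{lem:variance bound}, and the only new device is the integration against $\ex^{-a}\,\mathrm{d}a$. The one point needing care is the dependence on $s$ and $t$: $Q_{a,s,t}$ depends only on $t-s$, but the prelimit $Q^{n,\bT}_{a,s,t}$ depends on the two levels $\floor{sn}$ and $\floor{tn}$ separately, and there is no monotonicity in $t$ to exploit, so the passage from rational to arbitrary real $s<t$ is not free. This is immaterial for the application: in the proof of \cref{thm:ConvergenceOfBP} one conditions on $Y_s\hk{n}$ at only finitely many times, which may be taken rational, and tightness together with finite-dimensional convergence along a countable dense set of times already gives $J_1$-convergence. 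If arbitrary real $s<t$ are genuinely wanted, one runs the same Tonelli argument with an additional integration over $(s,t)$ against a summable weight, upgrading to Lebesgue-a.e.\ $(s,t)$.
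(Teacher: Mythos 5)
Your weighted-integration argument over $a$ is sound: the conditional variance bound from \cref{lem:variance bound} is indeed linear in $a$ (up to the harmless factor $(1+cn^{-\alpha\beta+\e}-c'n^{-\beta})^{a_nn^{\beta}-1}\le 1$), so $\int_0^\infty a\,\ex^{-a}\,\d a<\infty$ makes the Tonelli/Borel--Cantelli step work, and $\bar X_{n}(a)\to Q_{a,s,t}(\theta)$ holds for every $a$. This is a legitimate, if slightly heavier, substitute for what the paper does in its first paragraph, which is simply to apply Fubini to the ``for each fixed $(a,s,t)$, $\bP$-a.s.'' statement of \cref{lem:mgf starting from a convergence 1} to obtain ``$\bP$-a.s., for Lebesgue-a.e.\ $(a,s,t)$''. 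Your observation that monotonicity in $a$ is unavailable, so one cannot do better than a.e.\ $a$, is also correct and is exactly why the lemma is phrased with ``almost every $a$''.

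However, the lemma asserts the convergence for \emph{every} $t>s>0$, and your proof only delivers it for rational $(s,t)$ (or, with your suggested extra integration, for Lebesgue-a.e.\ $(s,t)$). You explicitly defer this upgrade, arguing it is immaterial downstream; that may or may not be true of the application, but it leaves the stated lemma unproved, and the paper in fact spends the bulk of its proof on precisely this point, via two ideas absent from your proposal. First, to pass from a.e.\ $t$ to all $t$ (for a.e.\ $s$ and $a$): the paper uses the modulus-of-continuity control $\lim_{\delta\downarrow 0}\limsup_n\Pt(\omega'_\delta(\tY^{(n,K)})>\eps)=0$ from \cref{prop:tightness} to show that $Q^{n,\bT}_{a_n,s,t}(\theta)$ is the limit of $Q^{n,\bT}_{a_n,s,t'}(\theta)$ as $t'\downarrow t$ uniformly in $n$, and then invokes continuity of $Q_{a,s,t}(\theta)$ in $t$. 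Second, to pass from a.e.\ $s$ to all $s$: the paper exploits the exact identities $Y^{(n)}_s=c^{\beta}Y^{(cn)}_{s'}$ and $Y^{(n)}_t=c^{\beta}Y^{(cn)}_{t'}$ with $c=s/s'$, $t'=t/c$, which transfer the already-established convergence at a good $s'$ to an arbitrary $s$, using the explicit scaling form of $Q_{a,s,t}$ (it depends only on $t-s$ and scales correctly under $a\mapsto ac^{-\beta}$, $\theta\mapsto c^{\beta}\theta$). Without these two steps your argument proves a strictly weaker statement, so you should either add them or restate the lemma with the weaker quantifiers and verify that the downstream proofs (\cref{lem:lift to whole sequence 2} and \cref{lem:fd convergence}, which are stated for all real times) still go through.
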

\begin{proof}
In the previous lemma, we showed that for any fixed $s,t$ and $a$, the convergence holds for all $\theta>0$ for $\bP$-almost every tree. This immediately implies the following: for $\bP$-almost every tree, the statement is true for Lebesgue almost every $a, s, t\ge 0$, including all rationals, using Fubini's theorem. 

Now note that, $\bP$-almost surely, for any fixed $\theta$, any $s>0$ and almost every $a>0$, the function $Q^{n,\bT}_{a_n,s,t}(\theta)$ is equal to the limit of $Q^{n,\bT}_{a_n,s,t'}(\theta)$ as $t' \downarrow t$, and this continues to hold as $n\to\infty$. Otherwise, it would contradict point (ii) in \cref{prop:tightness}, which says that for all $\eps>0$, $\lim_{\delta \downarrow 0} \limsup_{n \to \infty} \Pt \rk{ \omega_{\delta}'(\tY^{(n,K)}) > \eps} = 0$ (here, we take $K>t$ w.l.o.g.). Since the limiting function $Q_{a,s,t}(\theta)$ is continuous in all variables, we deduce the following: for almost every $s$, it holds for almost every choice of $a$ that the function $Q_{a,s,t}(\theta)$ converges for all $\theta$ and all $t$. Denote the set of good $s$ where this holds by $S^{\text{good}}$.

Then if $s \notin S^{\text{good}}$, choose some other $s' \in S^{\text{good}}$ and set $c=\frac{s}{s'}$. Given $t>s$, also set $t' = \frac{t}{c}$. Then $Y^{(n)}_s = c^{\beta}Y^{(cn)}_{s'}$ and $Y^{(n)}_t = c^{\beta}Y^{(cn)}_{t'}$. Suppose moreover that $a$ is such that $ Q^{cn,\bT}_{ac^{-\beta},s',t'}(\theta)$ converges for all $\theta$ (this will be the case for almost every $a$ by virtue of the fact that $s' \in S^{\text{good}}$ and $c$ is fixed). It then follows that (also using \eqref{eqn:branching prop 2})
\begin{align}
    \begin{split}
 Q^{n,\bT}_{a,s,t}(\theta) = \Et \ek{\exp\{-\theta Y_{t}\hk{n}\} \Big| Y_s\hk{n}=a_n  }  &= \Et \ek{\exp\{-c^{\beta} \theta Y_{t'}\hk{cn}\} \Big| Y_{s'}\hk{cn}=a_nc^{-\beta}  } \\
 &=  Q^{cn,\bT}_{ac^{-\beta},s',t'}(c^{\beta} \theta)  \\
 &\to \exp \{-c^{-\beta}a\CC(t'-s')^{-\beta}(1-\phi(c^{\beta} \theta  (t'-s')^{\beta})\} \\
 &= \exp \{-a\CC(t-s)^{-\beta}(1-\phi( (t-s)^{\beta} \theta))\} \\
 &= Q_{a,s,t}(\theta).
    \end{split}
\end{align}
We therefore now have the following statement for $\bP$-almost every tree: for all $s$, we have for almost every $a$ that the convergence holds for all $t, \theta$.
\end{proof}

\begin{remark}
Similar results that could be applied to lift the convergence of \cref{lem:mgf starting from a convergence 1} to that of \cref{lem:mgf starting from a convergence} appear in \cite[Theorem 12.12]{balan2019weak}.
\end{remark}

\subsection{Lifting to the full sequence}
Similarly to \cref{sctn:Laplace transform proof}, we fix a small $\delta>0$ and define for every $\eps>0$ the events
\begin{align}\label{EquationEnn}
\begin{split}
    E_n&=\gk{\abs{Y_n-\Ynk}>Y_n^{1-\delta}\vee n^{\beta-\delta}}, \\
    F_n&=F_n(\bT,\e)=\gk{\Pt\rk{E_n |\Gcal_n^\bT, Y_n>0}<\e/4},
\end{split}
\end{align}
where $k$ is chosen so that $n_{k-1}\leq n\leq\nk$. We then moreover showed in \cref{prop:verify condition} that for all $\eps>0$,
\begin{equation}\label{Equation62111}
    \bP\rk{F_n|\Fcal_n}\ge 1-o(1)\, \qquad\text{everywhere}\, .
\end{equation}

For this proof we slightly modify the events in question: assume $sn,tn\in\N$ and set
\begin{align}\label{EquationEnnn}
\begin{split}
  F_n(a,s,t) =  \gk{\Pt\rk{E_{tn} |\Gcal_{tn}^\bT, Y_{tn}>0, Y_{sn}=\lfloor a n^{\beta} \rfloor }<\e/4}\, .
\end{split}
\end{align}
\begin{lemma}\label{lem:new criterion}
    For all $a> 0, t>s > 0$, as $n \to \infty$, 
  \begin{equation}
         \bP\rk{F_n(a,s,t)|\Fcal_{tn}} \geq 1 - o(1) .
    \end{equation} 
\end{lemma}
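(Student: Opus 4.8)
The plan is to reduce \cref{lem:new criterion} to \cref{prop:verify condition} (equivalently \eqref{Equation62111}) by the same argument used there, checking that conditioning additionally on the event $\{Y_{sn} = \lfloor an^\beta\rfloor\}$ changes nothing essential. Recall from the proof of \cref{prop:verify condition} that, writing $N = |y_{tn}|$ for the number of vertices at level $tn$ connected to the root, and conditioning on $\mathcal{F}_{tn}$, the quantity $\bE[\Pt(E_{tn} \mid \OpenEd_{tn} = x_{tn}) \mid \Fcal_{tn}]$ was computed to be $\P^{\otimes N}(|\sum_{i=1}^N (Y_\ell\hk{i} - 1)| > N^{1-\delta}\vee (tn)^{\beta-\delta})$, where $\ell = n_k - tn$ and the $Y_\ell\hk{i}$ are i.i.d.\ sizes of the $\ell$-th generation of independent annealed critical clusters emanating from the surviving vertices $v_1,\dots,v_N$ at level $tn$. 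The key point is that this conditional expectation depends on $\bT$ only through $N$, and \emph{not} through the way those clusters reached level $tn$ — in particular, not through the value of $Y_{sn}$. So the whole computation in \cref{prop:verify condition}, which bounds the tail probability for any fixed $N$ (uniformly over $\bT_{tn}$ and over the choice $x_{tn}$), goes through verbatim, with $n$ replaced by $tn$. The only work is to confirm that adding the event $\{Y_{sn} = \lfloor an^\beta\rfloor\}$ into the conditioning on the left-hand side of \eqref{EquationEnnn} does not break the decomposition.

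First I would set up the decomposition. Fix $a, t > s > 0$ with $sn, tn \in \N$. Condition on $\Fcal_{tn}$ and, as in \cref{prop:verify condition}, on the configuration $\{\OpenEd_{tn} = x_{tn}\}$ of vertices at level $tn$ connected to the root by a monotone open path. The crucial observation is a conditional independence: given $\Fcal_{tn}$ and given the percolation configuration on $\bT$ up to level $tn$ — which determines both $\OpenEd_{tn}$ and $Y_{sn}$ — the subtrees $(\bT_{tn}\hk{i})_i$ emanating from the $v_i \in \OpenEd_{tn}$ together with the percolation on them are i.i.d.\ and independent of everything up to level $tn$. Hence
\[
\bE\bigl[\Pt\bigl(E_{tn} \bigm| \Gcal_{tn}^\bT, Y_{tn}>0, Y_{sn} = \lfloor an^\beta\rfloor\bigr) \bigm| \Fcal_{tn}\bigr]
\]
is bounded above (after averaging over the admissible $x_{tn}$, and using that $\Pt(Y_{sn} = \lfloor an^\beta\rfloor) > 0$ on the relevant event so the conditioning is well-defined) by $\max_{x_{tn}} \P^{\otimes |x_{tn}|}\bigl(|\sum_{i}(Y_\ell\hk{i}-1)| > |x_{tn}|^{1-\delta} \vee (tn)^{\beta-\delta}\bigr)$, exactly the quantity controlled in \cref{prop:verify condition}. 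One subtlety: here $\ell = n_{k'} - tn$ where $k'$ is chosen so that $n_{k'-1} \le tn \le n_{k'}$, so again $\ell = \Ocal((tn)^{(A-1)/A})$, which is all the proof of \cref{prop:verify condition} uses about $\ell$.

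Then I would invoke \cref{prop:verify condition} directly: there is a choice of $\delta > 0$ such that, uniformly over all $x_{tn} \subset \bT_{tn}$ and all possibilities for $\bT_{tn}$, $\bP(x_{tn} \in A_\bT\hk{tn} \mid \Fcal_{tn}) = 1 - o(1)$ as $n \to \infty$, where $A_\bT\hk{tn}$ is the set of "good" vertex-sets. By Markov's inequality this gives $\bP(\Pt(E_{tn} \mid \Gcal_{tn}^\bT, Y_{tn}>0, Y_{sn} = \lfloor an^\beta\rfloor) \ge \eps/4 \mid \Fcal_{tn}) = o(1)$ everywhere, i.e.\ $\bP(F_n(a,s,t) \mid \Fcal_{tn}) \ge 1 - o(1)$. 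I do not expect a genuine obstacle here; the only point requiring care — and the "main obstacle" such as it is — is verifying cleanly that the extra conditioning event $\{Y_{sn} = \lfloor an^\beta\rfloor\}$ is measurable with respect to the $\sigma$-algebra up to level $tn$ and is disjoint from $E_{tn}$'s dependence on the post-$tn$ randomness, so that it genuinely factors out and one really is reduced to the \emph{unconditional} annealed tail bound already established. Once that is noted, the statement is immediate from \cref{prop:verify condition}.
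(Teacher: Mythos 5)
Your proposal is correct and follows essentially the same route as the paper: the paper's proof is the one-line observation that $\{Y_{sn}=\lfloor an^{\beta}\rfloor\}$ is $\Gcal_{tn}^{\bT}$-measurable, so the bound of \cref{prop:verify condition} (which is uniform over all configurations $x_{tn}\subset\bT_{tn}$ and all realisations of $\bT_{tn}$) applies directly to the conditional probability defining $F_n(a,s,t)$. Your additional checks of the conditional-independence structure are correct but not needed beyond what \cref{prop:verify condition} already supplies.
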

\begin{proof}
Since the event $\{Y_{sn}=\lfloor a n^{\beta} \rfloor\}$ is $\Gcal_{tn}^\bT$-measurable, this is a direct consequence of \cref{prop:verify condition}.
\end{proof}

We now prove that \cref{lem:new criterion} is a sufficient condition to lift \cref{lem:mgf starting from a convergence} to the whole sequence.

\begin{lemma}[Criterion for full convergence]\label{lem:lift to whole sequence 2}
Suppose that for every $\eps>0$ Equation \eqref{lem:new criterion} holds. Then for $\bP$-almost every tree $\bT$ the following holds for every $t>0, s>0, \theta>0$, for almost every $a>0$. For all $n \geq 1$ set $a_n = n^{-\beta} \lfloor an^{\beta} \rfloor$ and
\[
Q^{n,\bT}_{a,s,t}(\theta) = \Et \ek{\exp\{-\theta Y_{t}\hk{n}\} \Big| Y_s\hk{n}=a_n  },
\]
and let $Q_{a,s,t}(\theta)$ denote the analogous quantity in the annealed model, i.e. as in \eqref{eqn:annealed Poisson mgf}. Then
\begin{equation}
    Q^{n,\bT}_{a,s,t}(\theta) \to Q_{a,s,t}(\theta)
\end{equation}
as $n \to \infty$. 
\end{lemma}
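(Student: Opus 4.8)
The plan is to mimic the lifting argument of \cref{lem:lift to whole sequence}, now applied to the conditioned Laplace transform $Q^{n,\bT}_{a,s,t}(\theta)$ in place of $\phit{n}{\theta}$, using \cref{lem:new criterion} in place of \eqref{Equation6211}. Fix $s,t,\theta>0$ and a good value of $a$ (one for which the subsequential convergence of \cref{lem:mgf starting from a convergence} holds). First I would establish an analogue of \cref{LemmaLink}: on the $\bP$-a.s. event coming from \cref{lem:mgf starting from a convergence} and \cref{thm:ConvergenceOfPartitionFunction}, whenever $n_{k-1}<tn\le n_k$ and the event $F_n(a,s,t)$ occurs (so that with conditional probability at least $1-\eps/4$ we have $|Y_{tn}-Y_{n_k}|\le Y_{tn}^{1-\delta}\vee (tn)^{\beta-\delta}$), then $|Q^{n,\bT}_{a,s,t}(\theta)-Q^{n_k/t,\bT}_{a,s,t}(\theta)|<\eps$. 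The argument is the same as in \cref{LemmaLink}: on the complement of the bad event $E_{tn}$ one has $|n^{-\beta}Y_{tn}-(n_k/t)^{-\beta}Y_{n_k}| = O(n^{-\beta}Y_{tn}^{1-\delta}) + O(n^{-(\frac{1}{2A}\wedge\delta)})$, and since $e^{-\theta x}$ is $1$-Lipschitz one expands $|1-e^{-\theta(\cdots)}|$ and uses that either $Y_{tn}$ is large (so $e^{-\theta n^{-\beta}Y_{tn}}$ is negligible) or $Y_{tn}$ is small (so the correction term is $o(1)$); the ratio of survival probabilities along $tn$ versus $n_k$ is controlled by \cref{thm:ConvergenceOfPartitionFunction} exactly as $\gamma_n$ was. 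I would also need to absorb the conditioning on $\{Y_{sn}=\lfloor an^\beta\rfloor\}$, but this is harmless since that event is $\Gcal_{tn}^\bT$-measurable and $F_n(a,s,t)$ already incorporates it.

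Next, I would run the contradiction/stopping-time argument of \cref{lem:lift to whole sequence} verbatim. Define $A_n=\{|Q^{n,\bT}_{a,s,t}(\theta)-Q_{a,s,t}(\theta)|>2\eps\}$ and $B_k=\{|Q^{n_k/t,\bT}_{a,s,t}(\theta)-Q_{a,s,t}(\theta)|>\eps\}$; by the previous step, $A_n\cap F_n(a,s,t)\subset B_k$ for $n$ in the appropriate block, for all $k$ beyond some $\bP$-a.s. finite threshold. Let $\tau_K=\inf\{tn\ge n_K: A_n \text{ occurs}\}$ (a stopping time for $(\Fcal_m)$), and use \cref{lem:new criterion} to get $\bP(F_{\tau_K}(a,s,t)\mid \tau_K<\infty,\Fcal_{\tau_K})\ge 1-o(1)\ge c>0$ for $K$ large. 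This yields $\bP(\exists k\ge K: B_k) \ge c\,\bP(\exists n\ge n_K: A_n)$. By \cref{lem:mgf starting from a convergence}, $B_k$ occurs only finitely often $\bP$-a.s., so the left side tends to $0$ as $K\to\infty$, forcing $\bP(A_n \text{ i.o.})=0$; since $\eps>0$ was arbitrary, $Q^{n,\bT}_{a,s,t}(\theta)\to Q_{a,s,t}(\theta)$. A minor bookkeeping point: $n_k/t$ need not be an integer, so I would instead compare $tn$ to the nearest element of $(n_k)$ and absorb the $O(1)$ discrepancy into the $(1+o(1))$ errors, exactly as is done for the $\lfloor tn\rfloor$ versus $tn$ approximation elsewhere in \cref{sec:BrachningProcessProf}.

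The main obstacle is the first step — the continuity estimate replacing \cref{LemmaLink} — because the conditioning is now on $\{Y_{sn}=\lfloor an^\beta\rfloor\}$ rather than merely on survival, and one must check that all the estimates (the two-regime split on $Y_{tn}$, the bound on the ratio of conditional survival probabilities, the control of $n^{-\beta}Y_{tn}$ versus the rescaled $Y_{n_k}$) survive this extra conditioning uniformly. I expect this to go through because \cref{prop:verify condition}, and hence \cref{lem:new criterion}, was proved uniformly over all possible configurations $x_n\subset\bT_n$ at the relevant level — in particular uniformly over the event $\{Y_{sn}=\lfloor an^\beta\rfloor\}$, which fixes only the size of an intermediate generation — so the deviation event $E_{tn}$ is unlikely regardless of the intermediate conditioning, and the Lipschitz expansion of the Laplace transform is insensitive to what we condition on. Everything else is a routine transcription of the arguments already carried out in \cref{sctn:Laplace transform proof}.
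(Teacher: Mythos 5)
Your proposal is correct and follows essentially the same route as the paper's proof: an analogue of \cref{LemmaLink} for the conditioned Laplace transforms, followed by the stopping-time argument of \cref{lem:lift to whole sequence} with \cref{lem:new criterion} supplying the lower bound on $\bP(F_{\tau_K}\mid \tau_K<\infty,\Fcal_{t\tau_K})$. The only cosmetic difference is that the paper's $B_k$ is stated in terms of $Q^{n_k,\bT}_{a,s,t}(\theta)$ (so that \cref{lem:mgf starting from a convergence} applies directly) rather than $Q^{n_k/t,\bT}_{a,s,t}(\theta)$, with the same $O(1)$ indexing discrepancies absorbed into the $(1+o(1))$ errors exactly as you describe.
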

\begin{proof}
The proof is similar to that of \cref{lem:lift to whole sequence} and we just write the details for completeness. In particular we already showed that the result holds almost surely along the subsequence $(n_k)_{k \geq 1}$ in \cref{lem:mgf starting from a convergence}.

Now fix some $\eps>0$ and for $n, k \geq 0$ write 
    \begin{align}
    \begin{split}
        A_n&=\gk{\abs{Q^{n,\bT}_{a,s,t}(\theta)-Q_{a,s,t}(\theta)}>2\e}\, , \\
        B_k&=\gk{\abs{Q^{\nk,\bT}_{a,s,t}(\theta)-Q_{a,s,t}(\theta)}>\e}\, .
        \end{split}
    \end{align}
    Note that $A_n$ is $\Fcal_{tn}$ measurable and $B_k$ is $\Fcal_{t\nk}$ measurable. For the rest of the proof we treat $a,s$ and $t$ as fixed and write $F_n$ in place of $F_n(a,s,t)$.
    
    By the same arguments as in Lemma \ref{LemmaLink}, we have that there exists $K_o<\infty$ such that for all $k\ge K_o$ and all $n \in [n_{k-1}, n_{k}]$, $A_n$ implies $B_k$ on the event $F_{n}$. In other words we have for all $n_{k-1}\leq n\leq\nk$ that
    \begin{equation}
        A_n\cap F_{n}\subset B_k\, .
    \end{equation}
  It follows that for any $K>K_o$,
    \begin{align}
    \begin{split}\label{eqn:AF B inclusion 2}
        \bP\rk{\exists k\ge K\colon B_k\text{ occurs}}&\ge \bP\rk{\exists n\ge n_K \colon A_n \cap F_{n} \text{ occurs}}.
   \end{split}
    \end{align}
Set $\tau_K = \inf \{ n \geq n_K: A_n \text{ occurs}\}$. Since $\tau_K$ is a stopping time, on the event $\{\tau_K < \infty\}$ we have that $\bP\rk{F_{\tau_K}|\tau_K < \infty, \Fcal_{t\tau_K}} \ge \frac{1}{2}$ (provided $K$ was large enough) by \cref{lem:new criterion}. Hence we can lower bound the right-hand side of \eqref{eqn:AF B inclusion 2} by 
\begin{align}\label{eqn:io from subsequence 2}
\bE\ek{    \bP\rk{\tau_K < \infty \text{ and } F_{t\tau_K}|\Fcal_{\tau_K}}} \geq \frac{1}{2}\bP \rk{\tau_K < \infty}.
\end{align}
Combining \eqref{eqn:AF B inclusion 2} and \eqref{eqn:io from subsequence 2} gives
\begin{align}
   \bP\rk{\exists k\ge K\colon B_k\text{ occurs}} \geq \frac{1}{2}\bP\rk{\exists n\ge n_K \colon A_n\text{ occurs}}.
\end{align}
On account of \cref{lem:mgf starting from a convergence}, we have that $B_k$ occurs only finitely many times $\bP$-almost surely, and hence the left hand side above tends to $0$ as $K \to \infty$. We deduce that the same holds for the right hand side, and therefore that $\bP \rk{A_n \text{ i.o.}}=0$. Since this holds for any $\eps>0$, this proves the claim.
\end{proof}

\begin{cor}
        We have that $\bP$-almost surely, for all $t>s>0$ it holds for almost every $a>0$ and any Borel set $B$ that
    \begin{equation}
       \Pt(Y_{t}\hk{n}\in B|Y_s\hk{n}=a) = P_{t-s}(a,B) + o(1).
    \end{equation}
\end{cor}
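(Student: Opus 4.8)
The plan is to read this corollary off from \cref{lem:lift to whole sequence 2} by the continuity theorem for Laplace transforms, with a small extra argument for the atom at $0$. First I would fix a realisation of $\bT$ in the $\bP$-almost sure event of \cref{lem:lift to whole sequence 2}, fix $t>s>0$, and restrict attention to the full-Lebesgue-measure set of $a>0$ for which $Q^{n,\bT}_{a,s,t}(\theta)\to Q_{a,s,t}(\theta)$ holds for \emph{every} $\theta>0$; the exceptional null set of $a$ can be taken independent of $\theta$ by monotonicity of the Laplace transform in $\theta$ together with continuity of the limit, exactly as in the proof of \cref{CorollaryConvergenceCondtionedSubsequence}. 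I would also record that, since $\bT$ has no leaves, $\abs{\bT_{\floor{sn}}}\to\infty$, so for all large $n$ the event $\{Y_s\hk{n}=a_n\}$ (with $a_n=n^{-\beta}\floor{an^\beta}$, which is how ``$Y_s\hk n=a$'' must be read since $Y_s\hk n$ is $n^{-\beta}\Z_{\ge 0}$-valued) has positive $\Pt$-probability; hence $Q^{n,\bT}_{a,s,t}$ is genuinely the Laplace transform of the conditional law $\nu^{\ssup n}_a$ of $Y_t\hk n$ given $Y_s\hk n=a_n$, a probability measure on $[0,\infty)$.

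Next I would observe that $Q_{a,s,t}(0^+)=1$ because $\phi(0)=1$, so by \eqref{eqn:annealed Poisson mgf} the limit $Q_{a,s,t}$ is the Laplace transform of the probability measure $P_{t-s}(a,\cdot)$ on $[0,\infty)$ (and this is exactly the transition kernel of the limiting CSBP appearing in \cref{thm:ConvergenceOfBP}). The continuity theorem for Laplace transforms on $[0,\infty)$ then upgrades the pointwise convergence of \cref{lem:lift to whole sequence 2} to weak convergence $\nu^{\ssup n}_a\Rightarrow P_{t-s}(a,\cdot)$, and the portmanteau theorem gives $\Pt(Y_t\hk n\in B\mid Y_s\hk n=a_n)=P_{t-s}(a,B)+o(1)$ for every Borel set $B$ with $P_{t-s}(a,\partial B)=0$.

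To go beyond continuity sets I would use the structure of the limit: $P_{t-s}(a,\cdot)$ is the time-$(t-s)$ law of the CSBP with branching mechanism $\psi$ started from $a>0$, which for the mechanisms here is absolutely continuous on $(0,\infty)$ and carries a single atom, at $0$, of mass $\lim_{\theta\to\infty}Q_{a,s,t}(\theta)=\ex^{-a\CC(t-s)^{-\beta}}$ (using $\phi(\infty)=0$); see \cite{li2012continuousstate}. The absolutely continuous part takes care of boundaries inside $(0,\infty)$, so only sets $B$ with $0\in\partial B$ remain, and for these it suffices to prove $\Pt(Y_t\hk n=0\mid Y_s\hk n=a_n)\to\ex^{-a\CC(t-s)^{-\beta}}$. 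I would establish this by a concentration estimate of the same type as \cref{lem:mgf starting from a convergence 1}, with $\ex^{-\theta S}$ replaced by $\1\{S=0\}$: conditionally on $\Fcal_{\floor{sn}}$ the extinction probability equals $(1-p_{(t-s)n})^{\floor{an^\beta}}\to\ex^{-a\CC(t-s)^{-\beta}}$ with $p_\ell=\P(Y_\ell>0)\sim\CC\ell^{-\beta}$, while the conditional variance is $\Ocal(n^{-1+\e})$ by \eqref{eqn:variance formula} and \cref{lem:variance bound}; Borel--Cantelli along $(n_k)_{k\ge1}$ then gives almost sure convergence along the subsequence, and one extends to all $n$ using the monotonicity of $m\mapsto\{Y_m=0\}$ together with $n_k/n_{k-1}\to1$. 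A final Fubini step makes the exceptional null sets in $a$ and in $\bT$ simultaneous, and one concludes that the displayed identity holds, $\bP$-almost surely and for almost every $a>0$, for every Borel $B$ with $\Leb(\partial B)=0$ — which is what the statement should be read as asserting.

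The main obstacle is precisely this last paragraph: everything up to continuity sets is automatic from \cref{lem:lift to whole sequence 2} and the continuity theorem, whereas the extinction probability is the one quantity to which the Laplace-transform convergence does not directly apply, so it has to be handled by its own concentration-and-lifting argument. The delicate point there is that, unlike in \cref{lem:lift to whole sequence 2}, the conditioning level $\floor{sn}$ moves with $n$, so the passage from the subsequence $(n_k)$ to the full sequence via the monotonicity sandwich $\Pt(Y_{\floor{tn_{k-1}}}=0\mid\cdot)\le\Pt(Y_{\floor{tn}}=0\mid\cdot)\le\Pt(Y_{\floor{tn_k}}=0\mid\cdot)$ needs the conditional means $(1-p_{\floor{tn_k}-\floor{sn}})^{\floor{an^\beta}}$ (and their analogues with $n_{k-1}$) to be controlled uniformly over $n\in[n_{k-1},n_k]$, which follows from $n_k/n_{k-1}\to1$ but should be written out carefully.
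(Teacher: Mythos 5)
Your proposal is correct and its core route is the same as the paper's: read the corollary off from \cref{lem:lift to whole sequence 2}. However, the paper's own proof is a single sentence — it simply asserts that the Laplace-transform convergence of \cref{lem:lift to whole sequence 2} gives almost sure convergence in distribution of $\Pt(Y_t\hk{n}\in\cdot\mid Y_s\hk{n}=a)$ to $P_{t-s}(a,\cdot)$ — which, strictly, only yields the displayed identity for continuity sets of the limit law, not for ``any Borel set $B$'' as literally written. You have correctly diagnosed this imprecision: since each $\nu^{\ssup n}_a$ is supported on $n^{-\beta}\Z_{\ge 0}$, the statement is false for arbitrary Borel $B$ and must be read as holding for continuity sets (or, as you propose, for $B$ with negligible boundary after separately controlling the atom at $0$). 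Your extra paragraph handling the extinction probability $\Pt(Y_t\hk{n}=0\mid Y_s\hk{n}=a_n)\to\ex^{-a\CC(t-s)^{-\beta}}$ by a concentration argument parallel to \cref{lem:mgf starting from a convergence 1} is sound and genuinely goes beyond the paper; it is not needed for the paper's downstream use of the corollary (in \cref{lem:fd convergence} the sets $B_i$ can be taken to be continuity sets), but it does repair the literal statement. In short: same key lemma and same mechanism, with your version being the careful one and the paper's being the terse one.
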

\begin{proof}
\cref{lem:lift to whole sequence 2} implies that for all $t>s>0$ that for almost every $a>0$, we have almost sure convergence in distribution of the law $\Pt\rk{Y_{t}\hk{n} \in \cdot|Y_s\hk{n}=a}$ to $\P(Y_{t} \in \cdot|Y_s=a)$.
\end{proof}

By iterating this result and combining with \cref{thm:ConvergenceOfLaplaceTransform}, we deduce the following.

\begin{lemma}\label{lem:fd convergence}
For $\bP$-almost every tree $\bT$ the following holds. Take any $d \geq 1$, any $t_1 < t_2 < \ldots < t_d \in (1, \infty)$ and let $f$ be the density function of the random variable with Laplace transform $\phi( \theta)$ as in \eqref{eqn:slack Laplace convergence}. Take any Borel sets $B_1,\ldots,B_d\subset [0,\infty)$. Also set $t_0=1$ and let $P_t$ be as in \eqref{eqn:branching prop 2}. Then
\begin{multline}
    \lim_{n\to\infty}\Pt\rk{n^{-\beta}Y_{\floor{t_in}}\in B_{i}\text{ for all }i\in\gk{1,\ldots,d}\big|Y_n > 0}\\
    =\int_{B_0}\d x_0 \int_{B_1}\d x_1\ldots\int _{B_d}\d x_d f(x_0)\prod_{i=1}^d P_{t_i-t_{i-1}}(x_{i-1},x_i) \, . 
\end{multline}
\end{lemma}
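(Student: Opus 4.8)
The plan is to prove the equivalent statement that, conditionally on $\{Y_n>0\}$, the vector $\bigl(n^{-\beta}Y_{\lfloor t_1 n\rfloor},\dots,n^{-\beta}Y_{\lfloor t_d n\rfloor}\bigr)$ converges in distribution to $(\tY_{t_1},\dots,\tY_{t_d})$, where $(\tY_t)_{t\ge 1}$ is the limiting CSBP with transition kernels $P_t$ of \eqref{eqn:annealed Poisson mgf} started from $\tY_1$ having density $f$; the displayed multiple integral (in which $B_0=[0,\infty)$ is understood) is then exactly the finite-dimensional law of this Markov process, and the ``almost every $a$'' restrictions inherited from the corollary stated just above are harmless since we integrate over the corresponding variables. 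I would argue by induction on $d$; the base case $d=0$, namely that $n^{-\beta}Y_n$ conditioned on $\{Y_n>0\}$ converges to the density $f$, is precisely \cref{thm:ConvergenceOfLaplaceTransform}.

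For the inductive step I would work with multivariate Laplace transforms and peel off the last time. Fix $\theta_1,\dots,\theta_d\ge 0$, write $\ell=\lfloor t_dn\rfloor-\lfloor t_{d-1}n\rfloor$, and condition on $\Gcal^\bT_{\lfloor t_{d-1}n\rfloor}$. Since the subtrees rooted at the vertices of the frontier $\OpenEd_{\lfloor t_{d-1}n\rfloor}$ are i.i.d.\ Galton--Watson trees,
\[
\Et\Bigl[\ex^{-\theta_d n^{-\beta}Y_{\lfloor t_dn\rfloor}}\,\Big|\,\Gcal^\bT_{\lfloor t_{d-1}n\rfloor}\Bigr]=\prod_{v\in\OpenEd_{\lfloor t_{d-1}n\rfloor}}\Et\bigl[\ex^{-\theta_d n^{-\beta}Y\hk{v}_\ell}\bigr],
\]
and the concentration argument from the proof of \cref{lem:mgf starting from a convergence 1} — take logarithms; use \eqref{eqn:slack prob convergence} and \eqref{eqn:slack Laplace convergence} for the mean of each factor and \cref{lem:variance bound} together with \eqref{eqn:variance formula} for the variance of the product — shows that, for $\bP$-a.e.\ tree, this conditional Laplace transform equals the $\Gcal^\bT_{\lfloor t_{d-1}n\rfloor}$-measurable quantity $Q_{n^{-\beta}Y_{\lfloor t_{d-1}n\rfloor},\,t_{d-1},\,t_d}(\theta_d)$ up to an $o(1)$ error (in $\Pt(\cdot\mid Y_n>0)$-probability and in $\mathrm{L}^1$); here one uses the tightness of $n^{-\beta}Y_{\lfloor t_{d-1}n\rfloor}$ under $\Pt(\cdot\mid Y_n>0)$ (from \cref{thm:ConvergenceOfPartitionFunction}, or \cref{prop:tightness}) to restrict to $|\OpenEd_{\lfloor t_{d-1}n\rfloor}|=O(n^\beta)$, where the concentration error is negligible. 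By the explicit form \eqref{eqn:annealed Poisson mgf} of $Q$, multiplying by $\ex^{-\theta_{d-1}n^{-\beta}Y_{\lfloor t_{d-1}n\rfloor}}$ produces $\ex^{-\theta'_{d-1}n^{-\beta}Y_{\lfloor t_{d-1}n\rfloor}}$ with $\theta'_{d-1}=\theta_{d-1}+\CC(t_d-t_{d-1})^{-\beta}\bigl(1-\phi(\theta_d(t_d-t_{d-1})^{\beta})\bigr)$, so the $d$-time Laplace transform is reduced, modulo an $o(1)$ error, to the $(d-1)$-time one with parameters $(\theta_1,\dots,\theta_{d-2},\theta'_{d-1})$; the inductive hypothesis identifies its limit, and the branching property \eqref{eqn:branching prop 2} of the CSBP unwinds $\theta'_{d-1}$ back to $\E\bigl[\exp\{-\sum_{i=1}^d\theta_i\tY_{t_i}\}\bigr]$. (Alternatively one may iterate the corollary directly at the level of distributions, chaining the kernels $P_{t_i-t_{i-1}}$ and interchanging the limit with the sum over intermediate values via dominated convergence, using tightness and the weak continuity of $x\mapsto P_t(x,\cdot)$.)

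The main obstacle is the ``asymptotic Markov property'' underlying the above: the law of $Y_{\lfloor t_dn\rfloor}$ given the \emph{entire} configuration up to level $\lfloor t_{d-1}n\rfloor$ must, in the limit, depend only on the frontier count $Y_{\lfloor t_{d-1}n\rfloor}$, and not on which vertices form the frontier nor on the earlier generation sizes $Y_{\lfloor t_jn\rfloor}$, $j<d-1$. The subtlety is that the frontier ranges over exponentially many subsets of $\bT_{\lfloor t_{d-1}n\rfloor}$, on many of which $\prod_v\Et[\ex^{-\theta_d n^{-\beta}Y\hk{v}_\ell}]$ could a priori be atypical; this is resolved by observing that $\OpenEd_{\lfloor t_{d-1}n\rfloor}$ is determined by the percolation at levels $\le\lfloor t_{d-1}n\rfloor$ and is therefore, given $\Fcal_{\lfloor t_{d-1}n\rfloor}$, independent of the subtrees hanging below it, so that conditionally on $\{\OpenEd_{\lfloor t_{d-1}n\rfloor}=A\}$ — and on any further event measurable with respect to generations $\le\lfloor t_{d-1}n\rfloor$, in particular on the earlier values $Y_{\lfloor t_jn\rfloor}$ — the factors $\bigl(\Et[\ex^{-\theta_d n^{-\beta}Y\hk{v}_\ell}]\bigr)_{v\in A}$ remain i.i.d.\ and the concentration estimate applies verbatim. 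Making this uniform — i.e.\ showing that the exceptional configurations carry vanishing $\Pt(\cdot\mid Y_n>0)$-mass, which combines \cref{lem:variance bound} with a Markov-inequality bound on $\Pt(|\OpenEd_{\lfloor t_{d-1}n\rfloor}|>Ln^\beta\mid Y_n>0)$ — is the technical heart of the argument.

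Finally, exactly as in \cref{sctn:Laplace transform proof} and \cref{sec:BrachningProcessProf}, all of the above is first proved along the subsequence $(n_k)_{k\ge1}$, along which the concentration error bounds are summable by the choice of $A$ in \cref{lem:subsequence suff}, so that Borel--Cantelli applies, and is then lifted to convergence for all $n$ by the continuity-event ($F_n$) and stopping-time argument already developed in \cref{lem:lift to whole sequence} and \cref{lem:lift to whole sequence 2}; combining the subsequential convergence with this lifting yields the claimed identity.
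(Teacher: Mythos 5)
Your proposal is correct and follows essentially the same route as the paper: the paper deduces this lemma in a single line, by iterating the one-step conditional convergence of the (unlabelled) corollary preceding it — itself built from \cref{lem:mgf starting from a convergence 1}, \cref{lem:mgf starting from a convergence} and \cref{lem:lift to whole sequence 2} — and combining with \cref{thm:ConvergenceOfLaplaceTransform}, which is exactly the chaining you carry out via the induction on $d$ and the joint Laplace transforms. Your write-up in fact supplies details the paper omits, correctly identifying the ``asymptotic Markov property'' (uniformity of the conditional Laplace transform over the exponentially many frontier configurations of a given size) as the real content of the iteration and resolving it with the same average-over-the-random-frontier-plus-Markov-inequality device the paper uses in \cref{prop:verify condition} and \cref{lem:IIC subsequence convergence}.
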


In particular, \cref{prop:tightness} and \cref{lem:fd convergence} together imply the result of \cref{thm:ConvergenceOfBP}.

\section{The IIC: proof of \cref{thm:IIC size biased}}\label{sec:IICProof}
Let us first define the measure on the IIC. For this, for a tree $T$ we let $T[n]$ denote the subtree of $T$ obtained by keeping only the vertices at level $n$ and below, and the edges between them. Recall also that for a finite tree $t$, $t_n$ denotes the collection of vertices at exactly level $n$ in $t$. If $\root \subset t \subset \bT$, the height of $t$ is equal to the maximal generation reached by $t$ (i.e. $\sup_{v \in t}|v|$).

In this section we let $\Hcal_n$ denote the set of subtrees of $\bT$ that contain $\root$ and have height $n$.

\begin{definition}[Law of the IIC]
    For any tree $t \in \Hcal_n$ with height $n$, set 
    \begin{equation}\label{eqn:IIC def}
        \mu_\bT\Big|_{n}[t]=\frac{\sum_{v\in t_n}\bW(v)}{\bW}\Pt\rk{\Ccal_{p_c}[n]=t}\, ,
    \end{equation}
    where $\Ccal_{p_c}$ is the root cluster, and hence $\Ccal_{p_c}[n]$ is the root cluster restricted to levels $n$ and below.
\end{definition}

We see that this is well-defined in the following lemma.

\begin{lemma}\label{lem:IIC well defined}
   For every $t \in \Hcal_n$,
\begin{equation}
    \mu_\bT\Big|_{n}[t]=\lim_{M\to\infty}\Pt\rk{\Ccal_{p_c}[n]=t\big |0\leftrightarrow\bT_{M}}\, .
\end{equation}
Moreover, the collection of measures $\left(\mu_\bT\Big|_{n}\right)_{n \geq 1}$ are consistent and therefore extend to a unique measure on infinite trees, which we will henceforth denote by $\mu_{\bT}$.
\end{lemma}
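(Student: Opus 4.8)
The plan is to prove the first identity by a short direct computation exploiting the independence of Bernoulli percolation across disjoint edge sets, and then to read off consistency from it almost for free.

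\emph{First identity.} I would fix $t\in\Hcal_n$ and $M>n$. Conditionally on $\{\Ccal_{p_c}[n]=t\}$, every vertex of $t_n$ is connected to $\root$, so $\{\root\leftrightarrow\bT_M\}$ occurs if and only if some $v\in t_n$ satisfies $v\lrd\bT_M$, i.e.\ $v$ reaches level $M$ along an open (necessarily monotone) path, which is an event depending only on the percolation configuration inside the subtree $\bT^{(v)}$ of $\bT$ rooted at $v$. Since the subtrees $(\bT^{(v)})_{v\in t_n}$ have pairwise disjoint edge sets, which are moreover disjoint from the edges at levels $\le n$ that determine $\{\Ccal_{p_c}[n]=t\}$, one gets
\[
\Pt\rk{\Ccal_{p_c}[n]=t,\ \root\leftrightarrow\bT_M}=\Pt\rk{\Ccal_{p_c}[n]=t}\Big(1-\prod_{v\in t_n}\big(1-\Pt\rk{v\lrd\bT_M}\big)\Big).
\]
Now I would invoke \cref{thm:ConvergenceOfPartitionFunction}: each $\bT^{(v)}$ is again a supercritical Galton--Watson tree with the same offspring law, so (applying it on the countable intersection over all vertices $v\in\bT$, a $\bP$-a.s.\ event) $(M-n)^{\beta}\Pt(v\lrd\bT_M)\to\CC\bW(v)$ for every $v$ as $M\to\infty$, together with $M^{\beta}\Pt(\root\leftrightarrow\bT_M)=M^{\beta}\Pt(Y_M>0)\to\CC\bW$, where $\bW>0$ $\bP$-a.s.\ (no leaves, hence no extinction, and the Kesten--Stigum condition holds since $\E[|\bT_1|^p]<\infty$ for some $p>1$ under \cref{assumption}). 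As $t_n$ is finite, expanding the product gives $1-\prod_{v\in t_n}(1-\Pt(v\lrd\bT_M))=\CC(M-n)^{-\beta}\big(\sum_{v\in t_n}\bW(v)\big)(1+o(1))$, and dividing by $\Pt(\root\leftrightarrow\bT_M)$ while using $(M-n)/M\to1$ yields
\[
\lim_{M\to\infty}\Pt\rk{\Ccal_{p_c}[n]=t\ \middle|\ \root\leftrightarrow\bT_M}=\Pt\rk{\Ccal_{p_c}[n]=t}\,\frac{\sum_{v\in t_n}\bW(v)}{\bW}=\mu_\bT\Big|_n[t].
\]

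\emph{Consistency and extension.} Since the generation sizes of $\bT$ are finite, $\Hcal_n$ is a finite set for each $n$. For any fixed $M$, the conditional laws $\Pt(\,\cdot\mid\root\leftrightarrow\bT_M)$ restricted to successive levels are automatically consistent, being marginals of a single probability measure; thus for $n'<n\le M$ and $t\in\Hcal_{n'}$,
\[
\sum_{\substack{t'\in\Hcal_n\\ t'[n']=t}}\Pt\rk{\Ccal_{p_c}[n]=t'\ \middle|\ \root\leftrightarrow\bT_M}=\Pt\rk{\Ccal_{p_c}[n']=t\ \middle|\ \root\leftrightarrow\bT_M}.
\]
Letting $M\to\infty$ and using the first identity together with the finiteness of the left-hand sum gives $\sum_{t'\in\Hcal_n,\,t'[n']=t}\mu_\bT|_n[t']=\mu_\bT|_{n'}[t]$; summing this over $t\in\Hcal_{n'}$ (again a finite sum) shows that each $\mu_\bT|_n$ has total mass $1$. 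Consistency then produces, by Kolmogorov's extension theorem, a unique probability measure $\mu_\bT$ on the space of infinite subtrees of $\bT$ containing $\root$ (the projective limit of the $\Hcal_n$ along the truncation maps $t\mapsto t[n]$).

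The argument is essentially mechanical; the only points I expect to require care are the simultaneous application of \cref{thm:ConvergenceOfPartitionFunction} to all the subtrees $\bT^{(v)}$ (handled by a countable union of null sets) together with the a.s.\ positivity of $\bW$, and the interchange of $\lim_{M\to\infty}$ with the sum over $\Hcal_n$ in the consistency step (immediate since $\Hcal_n$ is finite). If one prefers a self-contained consistency proof that does not route through the first identity, one can instead expand $\mu_\bT|_n[t']$ directly over the ways of extending $t=t'[n-1]$ to $t'\in\Hcal_n$, i.e.\ over the choices of open edges from $t_{n-1}$ into level $n$, and use the binomial theorem together with the branching identity $\sum_{w\ \text{child of }u}\bW(w)=\mu\,\bW(u)$ to recover $\sum_{u\in t_{n-1}}\bW(u)$ and hence $\mu_\bT|_{n-1}[t]$.
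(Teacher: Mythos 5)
Your argument is correct. For the first identity you and the paper do essentially the same thing: condition on $\{\Ccal_{p_c}[n]=t\}$, observe that the survival event factorises over the disjoint subtrees rooted at $t_n$, and apply \cref{thm:ConvergenceOfPartitionFunction} to each subtree and to the root. The only cosmetic difference is that you use the exact identity $1-\prod_{v\in t_n}(1-\Pt(v\lrd\bT_M))$ coming from independence, whereas the paper writes $\sum_{v\in t_n}\Pt(v\lrd\bT_M)+O(|t_n|^2M^{-2\beta})$ via inclusion--exclusion; these are interchangeable since $|t_n|$ is fixed and each $\Pt(v\lrd\bT_M)=O(M^{-\beta})$. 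Your attention to the a.s.\ positivity of $\bW$ and to applying \cref{thm:ConvergenceOfPartitionFunction} simultaneously to the countably many subtrees is exactly the care the step needs.

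For consistency, however, you take a genuinely different route. The paper proves consistency by a self-contained combinatorial computation: it expands $\mu_\bT|_{n+1}$ over the subsets $S$ of children of $t_n$ retained by percolation, applies the binomial theorem, and uses the harmonic identity $\bW(v)=p_c\sum_{w\,\text{child of}\,v}\bW(w)$ to collapse the sum back to $\mu_\bT|_n[t]$ (this is essentially the alternative you sketch in your final sentence). You instead deduce consistency by passing to the limit $M\to\infty$ in the automatic consistency of the conditioned measures $\Pt(\,\cdot\mid\root\leftrightarrow\bT_M)$, using finiteness of $\Hcal_n$ to interchange the limit with the sum. This is cleaner and gives normalisation ($\sum_t\mu_\bT|_n[t]=1$) for free, but it is parasitic on the first identity, whereas the paper's computation stands on its own. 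One point worth making explicit in your version: the decomposition $\{\Ccal_{p_c}[n']=t\}=\bigsqcup_{t'\in\Hcal_n,\,t'[n']=t}\{\Ccal_{p_c}[n]=t'\}$ is \emph{only} valid under the conditioning $\root\leftrightarrow\bT_M$ with $M\ge n$ (otherwise the cluster could die out between levels $n'$ and $n$, producing a $t'$ of height strictly less than $n$ and hence outside $\Hcal_n$); you implicitly use this but it deserves a sentence.
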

\begin{proof}
The first part of the proof is a carbon copy of that of \cite[Lemma 3.9]{michelen2020quenched}. In particular, we can write (applying \cref{thm:ConvergenceOfPartitionFunction} in the second and third steps, and where $O(\cdot)$ refers to the limit as $M \to \infty$ with $n$ fixed))
\begin{align}
    \begin{split}
 \Pt\rk{\Ccal_{p_c}[n]=t\big |0\leftrightarrow\bT_{M}} &= \frac{\Pt\rk{\Ccal_{p_c}[n]=t  \text{ and } 0\leftrightarrow\bT_{M}}}{\Pt\rk{0\leftrightarrow\bT_{M}}} \\
 &= \Pt\rk{\Ccal_{p_c}[n]=t} \frac{\sum_{v \in \bT_n} \Pt\rk{v\leftrightarrow\bT_{M}} + O(|t_n|^2M^{-2\beta})}{\Pt\rk{0\leftrightarrow\bT_{M}}} \\
 &\to \frac{\sum_{v\in \bT_n}\bW(v)}{\bW}\Pt\rk{\Ccal_{p_c}[n]=t}.
    \end{split}
\end{align}
To prove that the measures are consistent, we let $\textsf{Ch}(t_n)$ denote the set of children of $t_n$, and let $\textsf{Ch}_k(t_n)$ denote the collection of subsets of $\textsf{Ch}(t_n)$ of size $k$. Note that, if $v$ is a child of a vertex in $t_n$, then
\[
|\{S \subset \textsf{Ch}_k(t_n): v \in S\}|= \binom{|\textsf{Ch}(t_n)|-1}{k-1}\, .
\]
Hence, if $t \in \Hcal_n$, we can write (note that going from the third to the fourth line we can eliminate the option $k=0$, since the corresponding tree has probability $0$ under $\mu_\bT\Big|_{{n+1}}$ by \cref{eqn:IIC def})
\begin{align}
    \begin{split}
        \mu_\bT\Big|_{{n+1}}\left[t\right] &=\mu_\bT\Big|_{{n+1}}\left[\{t' \in \Hcal_{n+1} : t'[n]=t\}\right] \\
        &= \sum_{S \subset \textsf{Ch}(t_n)} \frac{\sum_{v\in S}\bW(v)}{\bW}\Pt\rk{\Ccal_{p_c}[n+1]=t \cup \{S\}} \\
        &=\sum_{S \subset \textsf{Ch}(t_n)} \frac{\sum_{v\in S}\bW(v)}{\bW} p_c^{|S|}(1-p_c)^{|\textsf{Ch}(t_n)|-|S|}\Pt\rk{\Ccal_{p_c}[n]=t}\\
        &= \sum_{k=1}^{|\textsf{Ch}(t_n)|} \sum_{S \subset \textsf{Ch}_k(t_n)} \frac{\sum_{v\in S}\bW(v)}{\bW}p_c^{k}(1-p_c)^{|\textsf{Ch}(t_n)|-k} \Pt\rk{\Ccal_{p_c}[n]=t} \\
        &= \sum_{k=1}^{|\textsf{Ch}(t_n)|}\sum_{v\in \textsf{Ch}(t_n)} \binom{|\textsf{Ch}(t_n)|-1}{k-1} \frac{\bW(v)}{\bW} p_c^{k}(1-p_c)^{|\textsf{Ch}(t_n)|-k} \Pt\rk{\Ccal_{p_c}[n]=t}.
    \end{split}
\end{align}
Now note that for any $v \in \bT$, we have that $W(v) = p_c \sum W(w)$, where the sum is over all children of $v$. Hence the last line above is equal to 
\begin{align}
    \begin{split}
        &\sum_{k=1}^{|\textsf{Ch}(t_n)|} \binom{|\textsf{Ch}(t_n)|-1}{k-1} \frac{\sum_{v\in t_n}\bW(v)}{\bW} p_c^{k-1}(1-p_c)^{|\textsf{Ch}(t_n)|-k} \Pt\rk{\Ccal_{p_c}[n]=t} \\
        &= \sum_{k=1}^{|\textsf{Ch}(t_n)|} \binom{|\textsf{Ch}(t_n)|-1}{k-1} p_c^{k-1}(1-p_c)^{|\textsf{Ch}(t_n)|-k} \mu_\bT\Big|_n[t]  = \mu_\bT\Big|_n[t],
    \end{split}
\end{align}
as required. The measures therefore extend by Kolmogorov's consistency theorem.
\end{proof}

Now let $\bZ_n$ be the number of vertices in the IIC at level $n$, rescaled by (i.e. divided by) a factor of $n^\beta$. Recall that we let $Y$ denote the random variable with Laplace transform appearing in \eqref{eqn:slack Laplace convergence}, and let $Y^*$ denote its size-biased version, meaning that (also using \eqref{eqn:Y mean}):
\begin{equation}
    \P \rk{Y^* \in [a,b]} = \frac{\E \ek{Y \mathbbm{1}\{Y \in [a,b]\}}}{\E \ek{Y}} = \CC\E \ek{Y \mathbbm{1}\{Y \in [a,b]\}}.
\end{equation}
The main aim of this section is to prove \cref{thm:IIC size biased}, which says that for $\bP$-almost every tree, $\bZ_n$ converges in distribution to $Y^*$.

Before launching into the proof, we give a short lemma, for which we use the following notation. For $n \geq 0, M \geq 0, K >0$ and $v \in \bT_n$, we write 
\[
p_v^{n,M} = \Pt \rk{v \lrd \bT_{n + M}}, \ \ \ p_v^{n,M}(K) = p_v^{n,M} \mathbbm{1}\{p_v^{n,M} \leq KM^{-\beta}\}\, .
\]

\begin{lemma}\label{lem:variance bound 2}
    There exists $K< \infty$ such that for all $n \geq 1$,
    \begin{equation}
        \Var{\Pt\rk{Y_n>0} \mathbbm{1}\{\Pt\rk{Y_n>0} \leq Kn^{-\beta}\}}\le \CC Kn^{-2\beta}\, ,
    \end{equation}
\end{lemma}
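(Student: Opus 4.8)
The plan is to bound the variance by the second moment and exploit that the truncation makes the random variable pointwise bounded by $Kn^{-\beta}$. Write $X_n=\Pt\rk{Y_n>0}\,\mathbbm{1}\{\Pt\rk{Y_n>0}\le Kn^{-\beta}\}$, so that $\Var{X_n}\le \bE\ek{X_n^2}$ (expectation here being over $\bP$). On the event $\{X_n>0\}$ one has $\Pt\rk{Y_n>0}\le Kn^{-\beta}$, hence deterministically
\[
X_n^2=\Pt\rk{Y_n>0}^2\,\mathbbm{1}\{\Pt\rk{Y_n>0}\le Kn^{-\beta}\}\le Kn^{-\beta}\,\Pt\rk{Y_n>0}\,\mathbbm{1}\{\Pt\rk{Y_n>0}\le Kn^{-\beta}\}\le Kn^{-\beta}\,\Pt\rk{Y_n>0}.
\]
Taking $\bP$-expectations and using $\bE\ek{\Pt\rk{Y_n>0}}=\P\rk{Y_n>0}$ (the annealed survival probability) gives $\bE\ek{X_n^2}\le Kn^{-\beta}\,\P\rk{Y_n>0}$. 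It is exactly the truncation that is being used here: without it the second moment of $\Pt\rk{Y_n>0}$ is only $\Ocal\rk{n^{-\beta\alpha+\e}}$ by \cref{lem:variance bound}, which is larger than $n^{-2\beta}$ in the stable regime $\alpha<2$, so it would not suffice.

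It then remains to feed in the annealed asymptotics $n^{\beta}\P\rk{Y_n>0}\to\CC$ from \eqref{eqn:slack prob convergence}: for all $n$ beyond some $n_0=n_0(\bP)$ we have $n^\beta\P\rk{Y_n>0}\le\CC(1+o(1))$, so $\bE\ek{X_n^2}\le\CC Kn^{-2\beta}(1+o(1))$, the factor $(1+o(1))$ being absorbed by taking $K$ small enough (one may fix, say, $K\le\CC$). For the finitely many $n\le n_0$ one instead uses the crude deterministic bound $0\le X_n\le Kn^{-\beta}$, which gives $\Var{X_n}\le\bE\ek{X_n^2}\le K^2n^{-2\beta}\le\CC Kn^{-2\beta}$ provided $K\le\CC$. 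Combining the two ranges yields the stated bound for every $n\ge1$.

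The only slightly delicate point — and it is a bookkeeping one rather than a real obstacle — is matching the precise constant $\CC K$ uniformly in $n$: the chain above produces the factor $n^{\beta}\P\rk{Y_n>0}$, which converges to $\CC$ but need not be $\le\CC$ for small $n$ (indeed $\P\rk{Y_n>0}$ can exceed $\CC n^{-\beta}$ when $n$ is small, e.g. $n=1$). This is handled purely through the choice of $K$ together with the trivial bound on $X_n$ for the exceptional values of $n$, as above; alternatively one can note that the truncation at level $Kn^{-\beta}$ removes precisely the heavy-tailed part of $\Pt\rk{Y_n>0}$ — whose $n^{\beta}$-rescaling converges to $\CC\bW$ with $\bW\in L^p$ only for $p<\alpha$ and $\bE\ek{\bW}=1$ — so that $\bE\ek{\Pt\rk{Y_n>0}\mathbbm{1}\{\Pt\rk{Y_n>0}\le Kn^{-\beta}\}}\le\CC n^{-\beta}$ for all $n$ once $K$ is fixed sufficiently small. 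No probabilistic input beyond \eqref{eqn:slack prob convergence} and the identity $\bE\ek{\Pt\rk{Y_n>0}}=\P\rk{Y_n>0}$ is needed.
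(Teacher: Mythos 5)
Your proof is correct and follows essentially the same route as the paper's: bound the variance by the truncated second moment, use the indicator to extract one factor of $Kn^{-\beta}$, and control the remaining first moment via the annealed survival probability. You are in fact more careful than the paper about the constant (the paper writes $\bE\ek{\Pt\rk{Y_n>0}}=\CC n^{-\beta}$ where only an asymptotic holds); note that your cleanest fix is the one you give last --- keeping the indicator so that $\bE\ek{X_n^2}\le Kn^{-\beta}\,\bE\ek{X_n}\le K^2n^{-2\beta}\le \CC Kn^{-2\beta}$ for $K\le\CC$, which works uniformly in $n$ with no asymptotics at all --- rather than the earlier remark about ``absorbing'' the $(1+o(1))$ factor by shrinking $K$, which does not literally remove a multiplicative error.
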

\begin{proof}
\begin{align}
\begin{split}
        \Var{\Pt\rk{Y_n>0} \mathbbm{1}\{\Pt\rk{Y_n>0} \leq Kn^{-\beta}\}} &\leq \bE \ek{\Pt\rk{Y_n>0}^2 \mathbbm{1}\{\Pt\rk{Y_n>0} \leq Kn^{-\beta}\}}  \\
        &\leq  Kn^{-\beta} \bE \ek{\Pt\rk{Y_n>0}} =\CC Kn^{-2\beta}.\qedhere
\end{split}
\end{align}
\end{proof}

This enables us to prove the following lemma, which is the key technical estimate in the proof of \cref{thm:IIC size biased}. In the following proof and for the rest of this section, we recall $\nk = k^{\frac{\sqrt{\alpha}+1}{\sqrt{\alpha}-1}}$ for all $k \geq 1$. 

\begin{lemma}[Subsequential control of connection probabilities]\label{lem:IIC subsequence convergence}
Fix $n \geq 0$ and an interval $(a,b) \subset (0, \infty)$ and suppose that $A_n$ is a sequence of (potentially random) sets such that $A_n \subset \bT_n$, such that $|A_n| \in [an^{\beta}, bn^{\beta}]$, and such that the law of $A_n$, denoted $\P_{\bT}^{A_n}$, is measurable with respect to $\Fcal_n$. Fix any $\delta>0$ and let $E_n^{\delta}$ denote the event that there is a subsequence $(M_j)_{j =1}^{\infty}$ such that \[
\P_{\bT}^{A_n}\left(\left|\frac{M_j^{\beta}}{n^{\beta}} \sum_{v \in A_n} p_v^{n,M_j} - |A_n|n^{-\beta}\CC \right| < \delta \text{ eventually as } j \to \infty \right) > 1-\delta.
\]
Then $E_n^{\delta}$ occurs eventually $\bP$-almost surely along the subsequence $(n_k)_{k=1}^{\infty}$.
\end{lemma}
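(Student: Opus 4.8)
The plan is to push $M\to\infty$ first, which collapses $E_n^\delta$ into a statement about the martingale limits $(\bW_v)_{v\in\bT_n}$ of the subtrees hanging off level $n$, and then to prove that statement by a law of large numbers combined with a Borel--Cantelli argument along $(n_k)$. \emph{Step 1 (reduction via $M\to\infty$):} for every $v\in\bT$ the subtree $\bT^{(v)}$ rooted at $v$ is, conditionally on the tree up to $v$, a Galton--Watson tree with law $\bP$, so applying \cref{thm:ConvergenceOfPartitionFunction} to each of the countably many $\bT^{(v)}$ gives a $\bP$-full-measure event $\Omega_0$ on which $M^\beta p_v^{|v|,M}=M^\beta\Pt\rk{v\lrd\bT_{|v|+M}}\to\CC\bW_v$ as $M\to\infty$, simultaneously for all $v$ (where $\bW_v$ is the martingale limit of $\bT^{(v)}$). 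Since $\bT_n$ is finite, on $\Omega_0$ this yields $\tfrac{M^\beta}{n^\beta}\sum_{v\in s}p_v^{n,M}\to\tfrac{\CC}{n^\beta}\sum_{v\in s}\bW_v$ for every $n$ and every $s\subset\bT_n$. Taking the full sequence $M_j=j$, and using $|s|n^{-\beta}\CC=\tfrac{\CC}{n^\beta}\sum_{v\in s}1$ together with $\delta/2<\delta$, the event inside $\P_\bT^{A_n}$ in the definition of $E_n^\delta$ therefore contains $\gk{s\subset\bT_n:\tfrac{\CC}{n^\beta}\babs{\sum_{v\in s}(\bW_v-1)}\le\tfrac{\delta}{2}}$. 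So it is enough to prove that $\bP$-a.s. along $(n_k)$, eventually $\P_\bT^{A_n}\rk{\tfrac{\CC}{n^\beta}\babs{\sum_{v\in A_n}(\bW_v-1)}\le\tfrac{\delta}{2}}>1-\delta$.

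\emph{Step 2 (law of large numbers for $\sum_{v\in A_n}\bW_v$):} put $B_n=\gk{s\subset\bT_n:\tfrac{\CC}{n^\beta}\babs{\sum_{v\in s}(\bW_v-1)}>\tfrac{\delta}{2}}$ and $G_n=\gk{\P_\bT^{A_n}(B_n)\le\tfrac{\delta}{2}}$, so that $G_n\cap\Omega_0\subset E_n^\delta$. Because the kernel $\P_\bT^{A_n}$ is $\Fcal_n$-measurable, while conditionally on $\Fcal_n$ the variables $(\bW_v)_{v\in\bT_n}$ are i.i.d.\ copies of $\bW$ with $\E\bW=1$ and are independent of that kernel, Fubini gives
\[
\bE\ek{\P_\bT^{A_n}(B_n)\mid\Fcal_n}=\int\bP\rk{\babs{\textstyle\sum_{i=1}^{|s|}(\bW_i-1)}>\tfrac{\delta n^\beta}{2\CC}}\,\P_\bT^{A_n}(ds)\le\sup_{an^\beta\le N\le bn^\beta}\bP\rk{\babs{\textstyle\sum_{i=1}^{N}(\bW_i-1)}>\tfrac{\delta n^\beta}{2\CC}},
\]
with $(\bW_i)$ i.i.d.\ copies of $\bW$. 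Fixing $p\in(1,\alpha)$ and using that $\bW\in\mathrm{L}^p$, the von Bahr--Esseen inequality bounds the $N$-th probability by $CN\rk{\delta n^\beta/2\CC}^{-p}=\Ocal\rk{n^{-\beta(p-1)}}$, uniformly over $N\le bn^\beta$; since $\beta(\alpha-1)=1$ in both cases of \cref{assumption}, letting $p\uparrow\alpha$ gives $\bE\ek{\P_\bT^{A_n}(B_n)}=\Ocal\rk{n^{-1+\eta}}$ for any $\eta>0$.

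\emph{Step 3 (Borel--Cantelli along $(n_k)$):} by Markov's inequality, $\bP(G_n^c)=\bP\rk{\P_\bT^{A_n}(B_n)>\tfrac{\delta}{2}}=\Ocal\rk{n^{-1+\eta}}$, and since $n_k\sim k^A$ with $A=\tfrac{\sqrt\alpha+1}{\sqrt\alpha-1}>1$ we may choose $\eta$ small enough that $\sum_k\bP(G_{n_k}^c)=\Ocal\rk{\sum_k k^{-A(1-\eta)}}<\infty$. Borel--Cantelli then shows that $\bP$-a.s.\ $G_{n_k}$ holds for all large $k$, and intersecting with $\Omega_0$ finishes the proof.

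The step I expect to be the main obstacle is the quantitative control underlying Steps 2--3: since $A_n$ carries only $\asymp n^\beta$ vertices, the concentration of $\sum_{v\in A_n}\bW_v$ about its mean $|A_n|$ decays merely polynomially in $n$, so one must exploit the full $\mathrm{L}^p$-integrability of $\bW$ for every $p<\alpha$ (equivalently, the identity $\beta(\alpha-1)=1$) together with the fact that the subsequence exponent $A$ strictly exceeds $1$ in order to make the error probabilities summable along $(n_k)$. A smaller point requiring care is that $A_n$ is only assumed to have an $\Fcal_n$-measurable law; it is precisely this conditional independence of $\P_\bT^{A_n}$ from the subtrees $(\bT^{(v)})_{v\in\bT_n}$ that licenses the interchange of integrals in Step 2.
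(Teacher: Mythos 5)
Your proof is correct, and it takes a genuinely different route from the paper's. The paper works at finite $M$: it truncates the probabilities at level $p_v^{n,M}\le KM^{-\beta}$ with $K=n^{\beta(1-\eps)}$, runs a first-moment and a variance bound (via \cref{lem:variance bound 2}) on the truncated sums, applies Chebyshev, and separately controls the exceedances of the truncation level using the $\mathrm{L}^{\alpha(1-\eps)}$-boundedness of $\bW$; because these estimates are only secured infinitely often in $M$, the lemma is phrased with a subsequence $(M_j)$. You instead send $M\to\infty$ first, using the quenched asymptotic $M^{\beta}p_v^{n,M}\to\CC\bW_v$ on each subtree (a fact the paper also invokes, but only to define the auxiliary event $F_{M,n}$), which collapses the statement to a weak law of large numbers for the conditionally i.i.d. mean-one variables $(\bW_v)_{v\in\bT_n}$; von Bahr--Esseen with $p<\alpha$ then gives the rate $n^{-\beta(p-1)}=n^{-1+\eta}$, which is summable along $(n_k)$ since $A>1$. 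Your Fubini step is legitimate precisely because $\P_\bT^{A_n}$ is $\Fcal_n$-measurable while the $\bW_v$ are functions of the subtrees below level $n$. What your route buys is a shorter argument and a slightly stronger conclusion (the full sequence $M_j=j$ works, not merely some subsequence); what the paper's route buys is quantitative control at finite $M$, at the cost of the truncation machinery. Either version suffices for the application in the proof of \cref{lem:IIC subsequence}.
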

\begin{proof}
First set $\delta' = \frac{\delta^2}{4b\CC}$ and choose $\tilde{K}< \infty$ such that $\bP \rk{\Pt\rk{Y_n>0} \geq \tilde{K} n^{-\beta}} \leq \delta'$ for all $n \geq 1$ (note that this is possible by \cref{thm:ConvergenceOfPartitionFunction}). We will abbreviate $A_n$ by $A$, let $\P_{\bT}^A$ denote the law of $A$, and write estimates conditionally on both $A$ and $\P_{\bT}^A$ (which is $\Fcal_n$-measurable by assumption) in what follows. 

First note that we can write for any $K \geq \tilde{K}$ and all sufficiently large $M$ that for any fixed $A' \subset \bT_n$ satisfying $|A'| \leq b n^{\beta}$,
    \begin{equation}
        \bE \ek{\left|\frac{M^{\beta}}{n^{\beta}} \sum_{v \in A'} p_v^{n,M}(K)  - |A'|n^{-\beta}\CC \right| \middle| \Fcal_n } \leq 2\delta' |A'|n^{-\beta}\CC \leq 2\delta' b\CC,
    \end{equation}
    by \eqref{eqn:slack prob convergence} and by our definition of $\tilde{K}$. Conditionally on $\Fcal_n$, the same estimate holds when $A$ is random with law $\P_{\bT}^A$ by averaging its possible realisations according to $\P_{\bT}^A$. Hence, since $\P_{\bT}^A$ is $\Fcal_n$-measurable, we can apply the tower property to deduce that 
    \begin{equation}
        \bE \ek{ \E_{\bT}^A \ek{\left|\frac{M^{\beta}}{n^{\beta}} \sum_{v \in A} p_v^{n,M}(K)  - |A|n^{-\beta}\CC \right|}} \leq 2\delta' b\CC.
    \end{equation}    
  Similarly, by \cref{lem:variance bound 2}, we have for any fixed $A' \subset \bT_n$ that
    \begin{equation}
        \Var{\frac{M^{\beta}}{n^{\beta}} \sum_{v \in A'} p_v^{n,M}(K) \big| \Fcal_n}  \leq \frac{M^{2\beta}}{n^{2\beta}}|A'|\CC KM^{-2\beta} \leq \frac{\CC b K}{n^{\beta}}.
    \end{equation}  
Again averaging over realisations of $A$ according to  $\P_{\bT}^A$, and then applying the tower property, we deduce that
    \begin{equation}
        \Var{ \E_{\bT}^A \ek{\frac{M^{\beta}}{n^{\beta}} \sum_{v \in A} p_v^{n,M}(K)} } \leq  \frac{\CC b K}{n^{\beta}}.
    \end{equation}
   Hence it follows from Chebyshev's inequality that
    \begin{align}\label{eqn:Chebyshev sum probs}
        \bP \rk{\E_{\bT}^A \ek{\left|\frac{M^{\beta}}{n^{\beta}} \sum_{v \in A} p_v^{n,M}(K) - |A|n^{-\beta}\CC \right|} \geq 4\delta' b\CC} \leq \frac{K}{4(\delta')^2 b \CC n^{\beta}} = O (n^{-\beta \eps})\, ,
    \end{align}
    where we have set $K=n^{\beta(1-\eps)}$ for some $\eps>0$ small enough that $\alpha (1-\eps)^2>1+\eps$ (e.g. $\eps = \frac{\sqrt{\alpha}-1}{\sqrt{\alpha}+1}$ will do - note that we then have that $K>\tilde{K}$ for all large enough $n$). By \cref{thm:ConvergenceOfPartitionFunction}, we know that for each $v \in \bT_n$, $M^{\beta}p_v^{n,M}$ converges almost surely to $\CC \bW_v$ as $M \to \infty$ (keeping $n$ fixed). Since $\bW$ is bounded in $\mathrm{L}^{\alpha(1-\eps)}(\bP)$, we have for all sufficiently large $M$ that there exists $C<\infty$, not depending on $n$, and an event $F_{M,n}$ that occurs eventually $\bP$-almost surely as $M\to \infty$, such that, for all $v \in \bT_n$,
    \begin{align}
        \bP \rk{F_{M,n}\text{ and } M^{\beta}p_v^{n,M} > K \middle| \Fcal_n} \leq CK^{-\alpha(1-\eps)}=C n^{-\beta\alpha(1-\e)^2}\le C n^{\beta(1+\e)}.
    \end{align}
    In particular, for all such $M$, we can write for any fixed $A' \subset \bT_n$ with $|A'| \leq bn^{\beta}$:
    \begin{align}
        \bP \rk{F_{M,n} \text{ and }  \exists v \in A': M^{\beta}p_v^{n,M} \geq K  \middle| \Fcal_n} \leq C|A'|K^{-\alpha(1-\eps)} \leq C|A'|n^{-\beta(1+\eps)} \leq Cb n^{-\beta \eps}.
    \end{align}
Again averaging over $A$ and applying the tower property as before, we deduce that
    \begin{align}\label{eqn:union v large p}
        \bE \ek{\1\{F_{M,n}\} \P_{\bT}^A \rk{\exists v \in A: M^{\beta}p_v^{n,M} \geq K }} \leq Cb n^{-\beta \eps}.
    \end{align}
In particular, \eqref{eqn:Chebyshev sum probs}, \eqref{eqn:union v large p}, Markov's inequality, a union bound Fatou's lemma and Borel--Cantelli imply that, eventually $\bP$-almost surely along the subsequence $(n_k)_{k=1}^{\infty}$, the following events occur simultaneously infinitely often as $M \to \infty$:
\begin{itemize}
    \item $\E_{\bT}^{A_n} \ek{\left|\frac{M^{\beta}}{n^{\beta}} \sum_{v \in {A_n}} p_v^{n,M}(K) - |A|n^{-\beta}\CC \right|} < 4\delta' b\CC$.
    \item $\1\{F_{M,n}\}\P_{\bT}^{A_n} \rk{\exists v \in {A_n}: M^{\beta}p_v^{n,M} \geq K } < \delta$.
\end{itemize}
Recall that $\delta^2=4\delta' b\CC$ and fix an $n$ and an $M$ such that the above two points occur. On the event $F_{M,n}$, we have that
\begin{align}
\begin{split}
 &\P_{\bT}^A \rk{\left|\frac{M^{\beta}}{n^{\beta}} \sum_{v \in A} p_v^{n,M} - |A|n^{-\beta}\CC \right| \geq \delta } \\
 &\leq \P_{\bT}^A \rk{\left|\frac{M^{\beta}}{n^{\beta}} \sum_{v \in A} p_v^{n,M}(K) - |A|n^{-\beta}\CC \right| > \delta} + \P_{\bT}^A \rk{\exists v \in A: M^{\beta}p_v^{n,M} \geq K } \leq 2\delta.
\end{split}
\end{align}
In particular, we know that $\bP$-almost surely along the subsequence, we have for all sufficiently large $k$ that there exists a sequence $(M_j^{(k)})_{j \geq 1}$ where the above estimate holds for $n=\nk$ and $M=M_j^{(k)}$. It therefore follows from Fatou's lemma that for all sufficiently large $k$,
\begin{align}
    \P_{\bT}^A \rk{\left|\frac{M^{\beta}}{\nk^{\beta}} \sum_{v \in A} p_v^{\nk,M} - |A|\nk^{-\beta}\CC \right| < \delta \text{ i.o. as } M \to \infty} \geq (1-2\delta)\liminf_{M \to \infty} \1\{F_{M,\nk}\}.
\end{align}
Since $F_{M,n_k}$ holds eventually $\bP$-almost surely as $M \to \infty$ for every $k$, this implies the claim.
\end{proof}

We now prove the main theorem in two steps as follows.

\begin{lemma}[Subsequential convergence of generation sizes in the IIC]\label{lem:IIC subsequence}
$\bP$-almost surely, the convergence of \cref{thm:IIC size biased} holds along the subsequence $(n_k)_{k \geq 1}$.
\end{lemma}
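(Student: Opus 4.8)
The plan is to evaluate the right-hand side of \eqref{eqn:IIC def} directly. Writing $\OpenEd_n=\{v\in\bT_n:\root\leftrightarrow v\}$ for the set of level-$n$ vertices of the root cluster (so $|\OpenEd_n|=Y_n$), one has, for every Borel set $B$ and every $n$,
\begin{equation*}
\mu_\bT\big(n^{-\beta}Y_n\in B\big)=\frac{1}{\bW}\,\Et\Big[\1\{n^{-\beta}Y_n\in B\}\sum_{v\in\OpenEd_n}\bW_v\Big]=\frac{1}{\bW}\sum_{v\in\bT_n}\bW_v\,q_v^{(n)},
\end{equation*}
where $q_v^{(n)}:=\Pt(v\in\OpenEd_n,\ n^{-\beta}Y_n\in B)$ is $\Fcal_n$-measurable (since it averages the percolation out), satisfies $q_v^{(n)}\le\Pt(v\in\OpenEd_n)=\mu^{-n}$, and $\sum_{v\in\bT_n}q_v^{(n)}\le\Et[Y_n]=\bW_n\le\overline{\bW}$. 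Since $Y^*$ has a continuous law on $(0,\infty)$ and $\mu_\bT(n^{-\beta}Y_n\in(0,\infty))=1$, it suffices to prove that $\bP$-a.s.\ $\mu_\bT(n_k^{-\beta}Y_{n_k}\in(a,b))\to\P(Y^*\in(a,b))$ for all $0<a<b<\infty$. I would then split $\frac{1}{\bW}\sum_v\bW_v q_v^{(n)}=\frac{1}{\bW}\sum_v q_v^{(n)}+\frac{1}{\bW}\sum_v(\bW_v-1)q_v^{(n)}$ and treat the two pieces separately.

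For the first (main) term note that
\begin{equation*}
\frac{1}{\bW}\sum_{v\in\bT_n}q_v^{(n)}=\frac{1}{\bW}\,\Et\big[Y_n\1\{n^{-\beta}Y_n\in(a,b)\}\big]=\frac{n^\beta\,\Pt(Y_n>0)}{\bW}\,\Et\big[n^{-\beta}Y_n\1\{n^{-\beta}Y_n\in(a,b)\}\ \big|\ Y_n>0\big].
\end{equation*}
By \cref{thm:ConvergenceOfPartitionFunction}, $n^\beta\Pt(Y_n>0)\to\CC\bW$; by \cref{thm:ConvergenceOfLaplaceTransform}, $n^{-\beta}Y_n\mid Y_n>0\xrightarrow{(d)}Y$, and since $x\mapsto x\1\{x\in(a,b)\}$ is bounded with $\P(Y\in\{a,b\})=0$ (as $Y$ has a density), the conditional expectation converges to $\E[Y\1\{Y\in(a,b)\}]$. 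Hence the main term converges $\bP$-a.s.\ to $\CC\,\E[Y\1\{Y\in(a,b)\}]=\P(Y^*\in(a,b))$ by the definition of the size-biased law. (This step already works for all $n$, not just along $(n_k)$.)

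The error term $\frac{1}{\bW}\sum_{v\in\bT_n}(\bW_v-1)q_v^{(n)}$ is where the real work lies. Conditionally on $\Fcal_n$ the variables $(\bW_v)_{v\in\bT_n}$ are i.i.d.\ copies of $\bW$ of mean $1$ and the $q_v^{(n)}$ are constants, so this is a sum of independent centred terms; the von Bahr--Esseen inequality with any exponent $\alpha'\in(1,\alpha)$ gives
\begin{equation*}
\bE\Big[\Big|\sum_{v\in\bT_n}(\bW_v-1)q_v^{(n)}\Big|^{\alpha'}\ \Big|\ \Fcal_n\Big]\le 2\,\bE|\bW-1|^{\alpha'}\sum_{v\in\bT_n}\big(q_v^{(n)}\big)^{\alpha'}\le 2\,\bE|\bW-1|^{\alpha'}\,\mu^{-n(\alpha'-1)}\,\overline{\bW},
\end{equation*}
using $q_v^{(n)}\le\mu^{-n}$ and $\sum_v q_v^{(n)}\le\overline{\bW}$; taking $\bP$-expectations (with $\bE\overline{\bW}<\infty$), Markov's inequality with threshold $1/n$, and Borel--Cantelli then yield $\sum_v(\bW_v-1)q_v^{(n)}\to0$ $\bP$-a.s., and dividing by $\bW\in(0,\infty)$ finishes. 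A route that re-uses the machinery of the previous sections is instead to apply \cref{lem:IIC subsequence convergence} to $A_n=\OpenEd_n$ sampled from the conditional law $\Pt(\cdot\mid n^{-\beta}Y_n\in(a,b))$ and combine it with $M^\beta\Pt(v\lrd\bT_{n+M})\to\CC\bW_v$ (\cref{thm:ConvergenceOfPartitionFunction} applied to the subtree at $v$), which gives that along $(n_k)$, with conditional probability $>1-\delta$, $\big|\sum_{v\in\OpenEd_n}\bW_v-Y_n\big|\le\delta n^\beta/\CC$, and then split on this good event. I expect the main obstacle to be the complementary "bad" event in this second route: because $\bW$ has only $\alpha<2$ finite moments, percolation configurations on which $\sum_{v\in\OpenEd_n}\bW_v$ deviates substantially from $Y_n$ still carry non-negligible mass under the weighting by $\sum_{v\in\OpenEd_n}\bW_v$ — equivalently, that weighting forces the spine vertex to carry a size-biased $\bW$-value whose $n^{-\beta}$-rescaling tends to $0$ only along a polynomially growing subsequence, which is exactly why one is restricted to $(n_k)$ — so one must truncate each $\bW_v$ at a polynomial threshold (say $n^{\beta(1-\eps)}$) and run a further Borel--Cantelli along $(n_k)$; the von Bahr--Esseen estimate sidesteps this entirely by exploiting the exponentially small weights $q_v^{(n)}\le\mu^{-n}$.
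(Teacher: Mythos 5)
Your proof is correct, and it takes a genuinely different --- and in fact stronger --- route than the paper's. The paper does not evaluate \eqref{eqn:IIC def} directly: it returns to the representation $\mu_\bT\rk{\bZ_{n_k}\in(a,b)}=\lim_{M\to\infty}\Pt\rk{n_k^{-\beta}Y_{n_k}\in(a,b)\mid 0\lrd\bT_{n_k+M}}$ from \cref{lem:IIC well defined}, applies Bayes' formula, and controls the resulting sums $\sum_{v\in\OpenEd_n}p_v^{n,M}$ via the dedicated concentration estimate \cref{lem:IIC subsequence convergence}; it is that lemma's Borel--Cantelli argument (with polynomial error $O(n^{-\beta\eps})$) that forces the restriction to the subsequence $(n_k)$. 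You instead work with the explicit formula, split $\sum_{v\in\bT_n}\bW_v q_v^{(n)}$ into its $\Fcal_n$-conditional mean $\sum_v q_v^{(n)}=\Et\ek{Y_n\1\{n^{-\beta}Y_n\in(a,b)\}}$ --- which converges to $\CC\bW\,\E\ek{Y\1\{Y\in(a,b)\}}$ by \cref{thm:ConvergenceOfPartitionFunction} and \cref{thm:ConvergenceOfLaplaceTransform} exactly as you say --- plus a centred fluctuation, and kill the fluctuation with von Bahr--Esseen. This is precisely the right substitute for the Chebyshev argument that the authors discard because $\bW$ has no second moment: with $1<\alpha'<\alpha$ one has $\bE\abs{\bW-1}^{\alpha'}<\infty$ and $\sum_v\bigl(q_v^{(n)}\bigr)^{\alpha'}\le\mu^{-n(\alpha'-1)}\overline{\bW}$, so the conditional $\alpha'$-th moment is exponentially small and Borel--Cantelli runs over the full sequence. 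The net gain is substantial: you obtain \cref{thm:IIC size biased} for all $n$ at once, rendering both \cref{lem:IIC subsequence convergence} and the subsequent lifting argument in the paper's proof of that theorem unnecessary; your second, sketched route (via \cref{lem:IIC subsequence convergence}) is essentially what the paper does, and your diagnosis of why it only yields subsequential control is accurate. The only details worth writing out in a final version are routine: $\bE\ek{\overline{\bW}}<\infty$ (Doob's inequality, already invoked in the paper), $\bW>0$ almost surely (no leaves), the absolute continuity of $Y$ (needed so that $x\mapsto x\1\{x\in(a,b)\}$ is $Y$-a.e.\ continuous for the portmanteau step and so that a countable family of rational endpoints suffices for a single almost sure event), and the tightness-at-$0$ observation justifying that intervals $(a,b)\subset(0,\infty)$ determine the weak limit.
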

\begin{proof}
Note that our aim is equivalent to calculate for every $a<b$, the a.s. limit
    \begin{equation}
        \lim_{n\to\infty}\mu_{\bT}\rk{\bZ_n\in (a,b)}\, 
    \end{equation}
along the subsequence $(n_k)_{k \geq 1}$. By \cref{lem:IIC well defined}, we have for each $k \geq 1$ that
\begin{equation}
    \mu_{\bT}\rk{\bZ_{n_k}\in (a,b)}=\lim_{M\to\infty}\Pt\rk{Y\hk{{n_k}}\in (a,b)|0\lrd\bT_{{n_k}+M}}\, .
\end{equation}
Since we already know from \cref{lem:IIC well defined} that the limit as $M \to \infty$ exists $\bP$-almost surely, it will suffice to evaluate it along a subsequence.

We first write using Bayes' formula that $\Pt\rk{Y\hk{{n_k}}\in (a,b)|0\lrd\bT_{{n_k}+M}}$ is equal to
\begin{equation}\label{eqn:Bayes decomp}
 \frac{\Pt\rk{0\lrd\bT_{{n_k}+M}|Y\hk{{n_k}}\in (a,b)}\Pt\rk{Y\hk{{n_k}}\in(a,b) | 0\lrd\bT_{{n_k}}}\Pt\rk{0\lrd\bT_{{n_k}}}}{\Pt\rk{0\lrd \bT_{{n_k}+M}}}\, .
\end{equation}
We start with the first term and write $n$ in place of $n_k$ for the time being. Note that, by \cref{thm:ConvergenceOfPartitionFunction}, the expression $\frac{\Pt\rk{0\lrd\bT_{{n}+M}|Y\hk{{n}}\in (a,b)}\Pt\rk{0\lrd\bT_{{n}}}}{\Pt\rk{0\lrd \bT_{{n}+M}}}$ is equal to (the $O(\cdot)$ refers to the limit as $M \to \infty$ with $n$ fixed):
\begin{align}
\begin{split}
     & \frac{(1+o(1))(n+M)^{\beta}}{n^{\beta}}\sum_{\substack{A \subset \bT_{n}: \\ n^{-\beta}|A| \in (a,b)}} \Pt \rk{\OpenEd\hk{{n}} =A | Y\hk{{n}}\in (a,b)} \left( \sum_{v \in A} p_v^{n,M} + O(M^{-2\beta}) \right).
\end{split}
\end{align}
(The latter statement holds for all sufficiently large $n$ appearing in the subsequence, $\bP$-almost surely.) Recalling that in fact $n=n_k$ and applying \cref{lem:IIC subsequence convergence}, we deduce that, $\bP$-almost surely, the following holds for all sufficiently large $n$: with $\P_{\bT}$-probability at least $1-\delta$, there is a subsequence $(M^{(n)}_j)_{j \geq 1}$ along which this is equal to
\begin{align}
\begin{split}
     & (1+o(1))\sum_{\substack{A \subset \bT_{n}: \\ n^{-\beta}|A| \in (a,b)}} \Pt \rk{\OpenEd\hk{{n}} =A | Y\hk{{n}}\in (a,b)} \left( |A|n^{-\beta}\CC + O( \delta) + O((M^{(n)}_j)^{-\beta}) \right) \\
     &\to (1+O(\delta))\Et \ek{Y\hk{n} |  Y\hk{{n}}\in (a,b)}\CC.
\end{split}
\end{align}
All that remains is therefore to multiply this by $\Pt\rk{Y\hk{{n}}\in(a,b) | 0\lrd\bT_{{n}}}$, and then take the limit as $n \to \infty$. In particular, we see that $\bP$-almost surely, the limit of \eqref{eqn:Bayes decomp} (for all sufficiently large $n$ in the subsequence) along the subsequence $(M^{(n)}_j)_{j \geq 1}$ is equal to
\begin{equation}
    (1+O(\delta))\CC\Et \ek{Y\hk{n} |  Y\hk{{n}}\in (a,b)} \Pt\rk{Y\hk{{n}}\in (a,b) | 0\lrd\bT_{{n}}}.
\end{equation}
In particular, since we already know that the limit exists (by \cref{lem:IIC well defined}), we see that the limit of \eqref{eqn:Bayes decomp} (for fixed $n$) is
\begin{equation}
(1+O(\delta))\CC\Et \ek{Y\hk{n} \mathbbm{1}\{ Y\hk{{n}}\in (a,b)\} | Y_n > 0}.
\end{equation}
So to establish the subsequential convergence, it just remains to take the limit of this latter expression. However, we already know from \cref{thm:ConvergenceOfLaplaceTransform} that $Y\hk{n}$ converges to $Y$, and hence this converges to the size-biased version $Y^*$ appearing in \cref{thm:IIC size biased}. Since $\delta>0$ was arbitrary, this implies the result.
\end{proof}

It remains to lift this from the subsequence $(n_k)_{k \geq 1}$ to the whole sequence $n \geq 1$.

\begin{proof}[Proof of \cref{thm:IIC size biased}]
It is sufficient to evaluate the limit (as $n \to \infty)$ of the limit (as $M \to \infty$) of $\frac{M^\beta}{n^{\beta}}\Pt\rk{0\lrd\bT_{{n}+M}|Y\hk{{n}}\in (a,b)}$, since we already know that the other terms appearing in \eqref{eqn:Bayes decomp} converge, $\bP$-almost surely. To this end, we take $E_n$ as in \eqref{EquationEn}, i.e.
\begin{align}
 E_n=\gk{\abs{Y_n-\Ynk}>Y_n^{1-\delta}\vee n^{\beta-\delta}},
\end{align} 
but this time take $\eps>0$ and 
\begin{equation}
    F_n'=F'_n(\bT,\e)=\gk{\Pt\rk{E_n | \Gcal_n^\bT, Y\hk{{n}}\in (a,b)}<\e/4}\, .
\end{equation}
Since the event $\{Y\hk{{n}}\in (a,b)\}$ is $\Gcal_n^\bT$-measurable, it follows from \cref{prop:verify condition} that 
\begin{equation}\label{eqn:Fn' LB}
    \bP\rk{F_n'|\Fcal_n}\ge 1-o(1)
\end{equation}
as well. Our strategy is therefore to follow that of \cref{lem:lift to whole sequence}, and first show an analogous statement to \eqref{eqn:A F B inclusion} and \cref{LemmaLink}, that is that on the event $F_n'$ the $n^{th}$ quantity should be close to the $n_k^{th}$ quantity when $n \in (n_{k-1}, n_k]$, and then apply the same logic as \cref{lem:lift to whole sequence} to lift the result of \cref{lem:IIC subsequence} to the whole sequence.

To this end, first note that on the event $F_n'$ we have by \cref{thm:ConvergenceOfLaplaceTransform} that for all $\eps>0$ that there exists $\delta>0$ such that
\begin{align}\label{eqn:conditioning n nk}
\begin{split}
    \Pt\rk{ Y\hk{{n_k}}\in (a,b)| Y\hk{{n}}\in (a,b)} &\geq     \Pt\rk{ Y\hk{{n_k}}\in (a,b)| Y\hk{{n}}\in (a+\delta,b-\delta)} (1-\eps/4) \\
    &\geq 1-\eps/2.
\end{split}
\end{align}
Hence we deduce that on the event $F_n'$, for any $n \in (n_{k-1}, n_k]$,
\begin{align}\label{eqn:ext IIC prob LB}
\begin{split}
    \Pt\rk{0\lrd\bT_{{n}+M}|Y\hk{{n}}\in (a,b)} &\geq \Pt\rk{0\lrd\bT_{{n_{k}}+M}|Y\hk{{n}}\in (a,b)} \\
    &\geq \Pt\rk{0\lrd\bT_{{n_{k}}+M}|Y\hk{{n_k}}\in (a,b)} (1-\eps/2).
\end{split}
\end{align}

In the other direction, first note that similarly to \eqref{eqn:conditioning n nk}, on the event $F_n'$ we have that \[
\Pt\rk{ Y\hk{{n_k}}\notin (a,b)| Y\hk{{n}}\notin (a,b)} \geq 1-\eps/2,
\]
which rearranges to 
\begin{align}\label{eqn:conditioning n nk 2}
\begin{split}
    \Pt\rk{ Y\hk{{n}}\in (a,b)| Y\hk{{n_k}}\in (a,b)} &\geq 1-\eps/2.
\end{split}
\end{align}
Hence we deduce that on the event $F_n'$, for any $n \in (n_{k-1}, n_k]$,
\begin{align}
\begin{split}
\Pt\rk{0\lrd\bT_{{n}+M}|Y\hk{{n}}\in (a,b)} &\leq \Pt\rk{0\lrd\bT_{{n}+M}|Y\hk{{n_k}}\in (a,b)} (1 + \eps/2) \\
    &= \Pt\rk{0\lrd\bT_{{n_{k}}+(M+n-n_k)}|Y\hk{{n_k}}\in (a,b)} (1+ \eps/2)\, .
\end{split}
\end{align}
Taking limits as $M\to \infty$ gives 
\begin{align}\label{eqn:ext IIC prob LB 2}
\begin{split}
\lim_{M \to \infty} M^{\beta} \Pt\rk{0\lrd\bT_{{n}+M}|Y\hk{{n}}\in (a,b)} &\leq \lim_{M \to \infty} M^{\beta} \Pt\rk{0\lrd\bT_{{n_{k}}+M}|Y\hk{{n_k}}\in (a,b)} (1+ \eps/2)\, .
\end{split}
\end{align}
To summarise, and using the fact that $\frac{n}{n_k} \to 1$ as $n \to \infty$, we showed in \eqref{eqn:ext IIC prob LB} and \eqref{eqn:ext IIC prob LB 2} that on the event $F_n'$ (and provided $n$ is sufficiently large), 
\begin{align}
\begin{split}
\left|   \lim_{M \to \infty} \frac{M^{\beta}}{n^{\beta}}\Pt\rk{0\lrd\bT_{{n}+M}|Y\hk{{n}}\in (a,b)} - \lim_{M \to \infty} \frac{M^{\beta}}{n_k^{\beta}} \Pt\rk{0\lrd\bT_{{n_{k}}+M}|Y\hk{{n_k}}\in (a,b)} \right|
    &\leq \eps.
\end{split}
\end{align}
We can therefore use this latter statement in combination with \eqref{eqn:Fn' LB} to imitate the proof of \cref{lem:lift to whole sequence} and deduce that 
\begin{equation}
  \lim_{M \to \infty} \left| \frac{M^{\beta}}{n^{\beta}}  \Pt\rk{0\lrd\bT_{{n}+M}|Y\hk{{n}}\in (a,b)} - \CC\Et \ek{Y\hk{n} \mathbbm{1}\{ Y\hk{{n}}\in (a,b)\} | Y_n > 0}\right| \leq \eps\, ,
\end{equation}
for all sufficiently large $n$, $\bP$-almost surely, 
and then combine with \eqref{eqn:Bayes decomp} exactly as in the previous proof to deduce the result.
\end{proof}

\appendix

\section{}

\begin{lemma}\label{LemmaConstants}
    Under \cref{assumption} (i.e. $\bP\rk{\abs{\bT_1}>x}\sim c_1 x^{-\alpha}$ for $\alpha\in (1,2)$ or $\Var{\abs{\bT_1}}<\infty$), we have that $\P\rk{Y_1>x}\sim c_1 \mu^{-\alpha}x^{-\alpha}$, as well as 
    \begin{equation}
    \phi(\theta)=1-\CC^{-1}\theta (1+(\CC^{-1}\theta)^{\alpha-1})^{-\beta}\quad\text{with}\quad \CC = c^{-\beta}_1\mu^{\alpha\beta}\Gamma(1-\alpha)^{-\beta}\beta^\beta\, .
    \end{equation}
Moreover, the branching mechanism (see \cref{sec:intro}) satisfies
    \begin{equation}
        \psi(\l)=\begin{cases}
            \beta C_\alpha^{1-\alpha} \lambda^{\alpha} &\text{ if }\alpha<2\, ,\\
           C_\alpha^{-1}\l^2&\text{ if the variance is finite.}
        \end{cases}
    \end{equation}
\end{lemma}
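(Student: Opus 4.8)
The plan is to handle the heavy-tailed case \cref{assumption}(b) and the finite-variance case \cref{assumption}(a) in parallel: first compute the tail (resp. variance) of the one-step law $Y_1$, then use this to identify the constant $\CC$ through the known survival-probability asymptotics, and finally read off the branching mechanism $\psi$ from the characterisation \eqref{eqn:annealed Poisson mgf} of the limiting CSBP.

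\textbf{Step 1: the law of $Y_1$.} Given $\bT$, the variable $Y_1$ is obtained from $\abs{\bT_1}$ by independent $\mu^{-1}$-thinning, so its probability generating function is $g(s) = f\rk{1 - \mu^{-1}(1-s)}$ where $f(s) = \E\ek{s^{\abs{\bT_1}}}$; in particular $g'(1) = 1$ (so the annealed root cluster is critical) and $g''(1) = \mu^{-2}f''(1) = \mu^{-2}\E\ek{\abs{\bT_1}(\abs{\bT_1}-1)}$. In case (a) this already gives $\Var{Y_1} = \mu^{-2}\E\ek{\abs{\bT_1}(\abs{\bT_1}-1)} < \infty$. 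In case (b) I would invoke the Karamata Abelian--Tauberian correspondence: $\bP(\abs{\bT_1} > x)\sim c_1 x^{-\alpha}$ is equivalent to $\mu - \frac{1-f(s)}{1-s}\sim \frac{c_1\Gamma(2-\alpha)}{\alpha-1}(1-s)^{\alpha-1}$ as $s\uparrow 1$, where $\tfrac{\Gamma(2-\alpha)}{\alpha-1}=-\Gamma(1-\alpha)>0$ (equivalently $\int_0^\infty(1-e^{-y})y^{-\alpha}\,\d y = \tfrac{\Gamma(2-\alpha)}{\alpha-1}$). Substituting $s\mapsto 1-\mu^{-1}(1-s)$ turns this into $1 - \tfrac{1-g(s)}{1-s}\sim c_1\mu^{-\alpha}\tfrac{\Gamma(2-\alpha)}{\alpha-1}(1-s)^{\alpha-1}$, and the converse direction of the same correspondence (valid for regularly varying tails without extra hypotheses) yields $\P(Y_1 > x)\sim c_1\mu^{-\alpha}x^{-\alpha}$.

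\textbf{Step 2: identifying $\CC$.} By \eqref{eqn:slack prob convergence}, $\CC = \lim_{n} n^{\beta}\P(Y_n > 0)$, where under $\P$ the root cluster is a critical Galton--Watson process with offspring law $Y_1$. In case (b), Step 1 reads $1 - g(1-v) = v - bv^{\alpha}(1+o(1))$ with $b := -c_1\mu^{-\alpha}\Gamma(1-\alpha) > 0$, so Slack's survival asymptotics \cite{slack1968branching} give $\P(Y_n > 0)\sim\rk{b(\alpha-1)n}^{-\beta}$; hence $\CC = \rk{b(\alpha-1)}^{-\beta} = c_1^{-\beta}\mu^{\alpha\beta}\rk{-\Gamma(1-\alpha)}^{-\beta}(\alpha-1)^{-\beta}$, and since $\alpha-1 = \beta^{-1}$ the last factor equals $\beta^{\beta}$, giving the claimed value of $\CC$. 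In case (a), Kolmogorov's classical asymptotics give $\P(Y_n > 0)\sim 2\rk{n\Var{Y_1}}^{-1} = 2\mu^2\rk{n\,\E\ek{\abs{\bT_1}(\abs{\bT_1}-1)}}^{-1}$, so $\CC = 2\E\ek{\abs{\bT_1}}^2\big/\E\ek{\abs{\bT_1}(\abs{\bT_1}-1)}$, matching \cref{assumption}(a). The asserted form of $\phi$ is then precisely Slack's Yaglom limit \eqref{eqn:slack Laplace convergence} with this $\CC$ (and, as a consistency check, $\phi'(0) = -\CC^{-1}$ agrees with \eqref{eqn:Y mean}).

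\textbf{Step 3: the branching mechanism.} From \eqref{eqn:annealed Poisson mgf} and the branching property, the cumulant semigroup of the limiting CSBP is $u_r(\theta) = \CC r^{-\beta}\rk{1 - \phi(\theta r^{\beta})}$ for $r > 0$. Plugging in the formula for $\phi$ and using $\beta(\alpha-1) = 1$ gives $u_r(\theta) = \theta\rk{1 + \CC^{1-\alpha}\theta^{\alpha-1}r}^{-\beta}$, equivalently $u_r(\theta)^{-(\alpha-1)} = \theta^{-(\alpha-1)} + \CC^{1-\alpha}r$, which extends continuously to $r=0$ with $u_0(\theta) = \theta$. Differentiating in $r$ yields $\partial_r u_r = -\beta\CC^{1-\alpha}u_r^{\alpha}$, and since the cumulant semigroup of a CSBP solves $\partial_r u_r = -\psi(u_r)$ with $u_0(\theta)=\theta$, we conclude $\psi(\lambda) = \beta\CC^{1-\alpha}\lambda^{\alpha}$; specialising to $\alpha = 2$, $\beta = 1$ recovers the finite-variance value $\psi(\lambda) = \CC^{-1}\lambda^{2}$. \textbf{Expected main obstacle.} The only genuinely delicate point is Step 1 in case (b): correctly pinning down the constant $\tfrac{\Gamma(2-\alpha)}{\alpha-1} = -\Gamma(1-\alpha)$ in the behaviour of $f$ near $1$, including its sign, and applying the regularly-varying Abelian and Tauberian theorems in the right direction. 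Once this constant is in hand, Steps 2 and 3 are bookkeeping resting on Slack's and Kolmogorov's survival asymptotics, the already-cited Yaglom form of $\phi$, and the characterisation \eqref{eqn:annealed Poisson mgf}.
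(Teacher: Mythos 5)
Your proof is correct and follows essentially the same route as the paper's: thinning to get the one-step law, the Karamata correspondence between the offspring tail and the generating function near $1$ feeding into Slack's survival and Yaglom asymptotics, and the cumulant-semigroup equation $\partial_t u_t=-\psi(u_t)$ to identify $\psi$ (the paper handles the thinning by a direct binomial tail estimate rather than by composing generating functions, and reads $\psi$ off the integral equation at $t=0$, but these are cosmetic differences). One small point in your favour: your constant $-\Gamma(1-\alpha)=\Gamma(2-\alpha)/(\alpha-1)>0$ carries the correct sign, whereas the paper's $c_o=c_1\mu^{-\alpha}\Gamma(1-\alpha)$ and the factor $\Gamma(1-\alpha)^{-\beta}$ in $\CC$ implicitly require an absolute value, since $\Gamma(1-\alpha)<0$ for $\alpha\in(1,2)$.
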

\begin{proof}
    According to \cite[Lemma 2]{slack1968branching}, if the generating function $f(s)=\E\ek{s^{Y_1}}$ of the critical annealed process satisfies
    \begin{equation}\label{Equation231120231}
        f(s)=s+(1-s)^{\alpha}L(1-s)\quad\text{with}\quad \lim_{x\to 0^+}L(x)=c_o\, ,
    \end{equation}
    then
    \begin{equation}
        \P\rk{Y_n>0}^{\alpha-1} c_o\sim \rk{n(\alpha-1)}^{-1}\quad\Longleftrightarrow\quad \P\rk{Y_n>0}\sim \rk{c_o n(\alpha-1)}^{-\beta}\, .
    \end{equation}
    Write $C_\alpha=C(\alpha, c_o)= c_o^{-\beta}\beta^\beta$ so that $\P\rk{Y_n>0}\sim \CC n^{-\beta}$. It is then shown that (see \cite[Theorem 1]{slack1968branching})
    \begin{equation}
        \E\ek{\ex^{-u n^{-\beta}Y_n}|Y_n>0}\sim 1-(u/C_\alpha)\rk{1+(u/C_\alpha)^{\alpha-1}}^{-\beta}\, .
    \end{equation}
    By Equation \eqref{Equation231120231}, we get that, as $u \to 0$,
    \begin{equation}
        \E\ek{\ex^{-u Y_1}}=f\rk{\ex^{-u}}=\ex^{-u}+\rk{1-\ex^{-u}}^\alpha L\rk{1-\ex^{-u}} = 1 - u + c_ou^{\alpha} + o(u^{\alpha})\, .
    \end{equation}
    This gives by \cite[Theorem 8.1.6]{bingham1989regular}, that
    \begin{equation}
        \P\rk{Y_1>x}\sim \frac{c_o}{\Gamma(1-\alpha)}x^{-\alpha}\, .
    \end{equation}
    Now, suppose that we have in the quenched case that the mean $\mu>1$ and that
    \begin{equation}
        \bP\rk{\abs{\bT_1}>x}\sim c_1 x^{-\alpha}\, .
    \end{equation}
By standard concentration estimates for binomial random variables, it follows that $\P\rk{Y_1>x}\sim c_1\mu^{-\alpha}x^{-\alpha}$. This implies that, in the notation above,
    \begin{equation}
        c_o=c_1\mu^{-\alpha}\Gamma(1-\alpha)\quad\text{and hence}\quad C_\alpha=c^{-\beta}_1\mu^{\alpha\beta}\Gamma(1-\alpha)^{-\beta}\beta^\beta\, .
    \end{equation}
    Next, we compute the branching mechanism $\psi$. Note that by \eqref{eqn:annealed Poisson mgf}, if
\begin{align}\label{eqn:Laplace computations}
\begin{split}
    \E \ek{e^{-\theta \tY_t} \vert \tY_s = a}=\ex^{-a u_{t-s}(\theta)}\qquad\text{then}\qquad u_t(\l)&=\l\rk{1+\rk{\CC^{-1} \l}^{\alpha-1}t}^{-\beta} \\
    \text{ and} \qquad  u_t'(\l)&=-\beta \CC^{1-\alpha}\l^{\alpha} \rk{1+\rk{\CC^{-1} \l}^{\alpha-1}t}^{-\beta-1} \, .
\end{split}
\end{align}
By \cite[Section 1.3]{le1999spatial}, we have that $\psi(\l)=\tilde{c}\l^{\alpha}$ (in the case $\alpha<2$) and $\psi(\l)=\tilde{c}\l^2$ in the finite variance case. Furthermore, we have that $u_t(\l)$ satisfies the following integral equation:
\begin{equation}
    u_t(\l)+\int_0^t \psi\rk{u_s(\l)}\d s=\l\, .
\end{equation}
Then, if $\alpha<2$
\begin{equation}
    u_t(\l)+\tilde{c}\int_0^t \rk{u_s(\l)}^\alpha\d s=\l\quad\Longrightarrow\quad u_0'(\l)+\tilde{c}u_0(\l)^\alpha=0\, ,
\end{equation}
From this, \eqref{eqn:Laplace computations} and the fact that $\frac{\beta+1}{\beta}=\alpha$ we infer that $\tilde{c}=\beta C_\alpha^{1-\alpha}$. The finite variance case is analogous.
\end{proof}
\begin{lemma}\label{LemmaAnnealedLargeDeviation}
    Let $\P$ be the law of the critical annealed tree with stable law $\alpha\in (1,2]$. Let $Y_l$ be the number of individuals at generation $l$ conditioned to survive and let $(Y_l\hk{i})_{i\geq 1}$ be i.i.d. copies of $Y_l$. Then for all $\e>0$, there exists $C>0$ such that for all $N,l,x$ with $x\ge N^{1/(\alpha-\e)}$
    \begin{equation}
        \P^{\otimes N}\rk{\sum_{i=1}^N (p_lY_l\hk{i}-1)>x}< C  N x^{-(\alpha-\e)}\, ,
    \end{equation}
    for $p_l:=\P\rk{Y_l>0}\sim \CC l^{-\beta}$. 
\end{lemma}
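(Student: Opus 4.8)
The plan is to isolate the branching-process content into a single uniform moment estimate, and then conclude with a routine heavy-tailed large-deviation bound for sums of i.i.d.\ centred random variables. Throughout, $Y_l$ denotes the generation-$l$ size conditioned to survive, so that $\E[Y_l\mid Y_l>0] = p_l^{-1}$ by criticality and Bayes' formula. We may assume $\e>0$ is small enough that $p := \alpha - \e \in (1,\alpha)$ (so in particular $1<p<2$); for larger $\e$ the assertion is weaker, since $x \ge N^{1/(\alpha-\e)} \ge 1$.

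\textbf{Step 1 (the crux): a uniform moment bound.} The key input I would establish is
\[
M_p := \sup_{l \ge 1}\E\!\ek{(p_l Y_l)^{p}} < \infty ,
\]
equivalently a uniform power-law tail bound $\sup_{l,\,t\ge 1} t^{\alpha}\,\P\rk{p_l Y_l > t} < \infty$. This is a standard feature of critical Galton--Watson processes in the domain of attraction of an $\alpha$-stable law: one extracts it from controlled estimates on the iterates $f^{\circ l}$ of the offspring generating function near $s = 1$ (recall from Lemma~\ref{LemmaConstants} that $1 - f(s) = (1-s) - (1-s)^{\alpha}L(1-s)$ with $\lim_{x\to 0^+}L(x) \in (0,\infty)$), exactly in the spirit of Slack's derivation of the Yaglom-type limit \cite{slack1968branching}; in the finite-variance case $\alpha=2$ it is classical \cite{kesten1966galton}. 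I expect this step to be the main obstacle, as it is the only place that uses the branching structure and the precise form of the offspring tails; the rest is generic.

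\textbf{Step 2 (conclusion via von Bahr--Esseen).} Write $X_i := p_l Y_l^{(i)} - 1$. These are i.i.d., bounded below by $-1$, centred, and by Step 1 and the $c_r$-inequality satisfy $\E|X_i|^{p} \le 2^{p}(M_p + 1) =: C_p$ uniformly in $l$. Since $1 < p < 2$, the von Bahr--Esseen inequality gives $\E\,\babs{\sum_{i=1}^N X_i}^{p} \le 2N\,\E|X_1|^{p} \le 2 C_p N$, and Markov's inequality then gives, for every $x > 0$,
\[
\P^{\otimes N}\!\rk{\sum_{i=1}^N X_i > x} \le \frac{\E\,\babs{\sum_{i=1}^N X_i}^{p}}{x^{p}} \le \frac{2 C_p N}{x^{\alpha - \e}},
\]
which is the claimed bound with $C = 2C_p$. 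In particular it holds for all $x>0$, so the hypothesis $x \ge N^{1/(\alpha-\e)}$ is used only through $x\ge 1$.

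As an alternative to the von Bahr--Esseen step (which also yields the sharper exponent $\alpha$), one can run a one-big-jump argument: by Step 1, $\P(\exists i : X_i > x) \le N\,\P(X_1 > x) \le CNx^{-\alpha}$; on the complement, replace each $X_i$ by $X_i\1\{X_i \le x\}$, recentre (the truncated mean is nonpositive), and apply Chebyshev's inequality using $\E\!\ek{X_1^2\1\{X_1 \le x\}} \le C x^{2-\alpha}$ for $x\ge1$ (a constant when $\alpha=2$), to obtain $\P^{\otimes N}(\sum_i X_i > x) \le CNx^{-\alpha}$ for $x\ge1$.
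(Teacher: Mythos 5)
Your architecture is sound, and your Step 2 takes a genuinely different route from the paper, which concludes by invoking a general large-deviation theorem for heavy-tailed i.i.d.\ sums (Berger's Theorem 5.1(ii)) — that citation is the reason the hypothesis $x\ge N^{1/(\alpha-\e)}$ appears in the statement at all. Your von Bahr--Esseen $+$ Markov argument is self-contained, valid for all $x>0$, and the centring $\E\ek{p_lY_l\hk{i}}=1$ is correctly justified by criticality and Bayes; the one-big-jump alternative you sketch also works. So on the concluding step your proposal is correct and arguably cleaner than the paper's.

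The problem is Step 1, which you assert rather than prove, and which is where essentially the entire content of the paper's proof lives. Concretely, what has to be supplied is: (i) for large $l$, a uniform-in-$u\in(0,1)$ asymptotic for the conditioned Laplace transform, $\phi_l(u)=1-u\rk{1+u^{\alpha-1}}^{-\beta(1+o(1))}$, which the paper extracts from Slack's ratio estimate $(1-f_{r+1}(0))/(1-f_r(0))=1-\beta r^{-1}(1+o(1))$ for the iterates $f^{\circ r}(0)$, together with a careful choice of $k_n(u)$ so that $f_{k_n}(0)\le \ex^{-u(1-f_n(0))}<f_{k_n+1}(0)$; (ii) a conversion from Laplace-transform asymptotics to tail bounds, for which the paper uses the elementary inclusion $\gk{X\ge 2r}\subset\gk{\ex^{-X/r}-1+X/r\ge 1}$ plus Markov applied to the nonnegative variable $\ex^{-X/r}-1+X/r$; and (iii) a separate treatment of the finitely many small $l$, where one notes that $f^{\circ l}$ is a finite composition of $f$ and so $Y_l$ retains $O(x^{-\alpha})$ tails. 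Note also that this yields only the exponent $\alpha(1+o(1))$ as $l\to\infty$, not your stronger uniform claim $\sup_{l,t}t^{\alpha}\P\rk{p_lY_l>t}<\infty$; that weaker version still gives $\sup_{l}\E\ek{(p_lY_l)^{p}}<\infty$ for every $p<\alpha$, which is all your Step 2 needs. Your Step 1 is true and provable exactly along the lines you indicate, and you correctly flag it as the crux, but as written the proposal defers precisely the part of the lemma that requires work; without (i)--(iii) filled in, it is a plan rather than a proof.
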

\begin{proof}
    We begin by developing a bound for 
    \begin{equation}
        \P\rk{ l^{-\beta}Y_l>z}\, .
    \end{equation}
    Let $f_n$ be the probability generating function of the critical process after $n$ generations, i.e. $f_n(s)=\E\ek{s^{Y_n}}$. Note that, by \eqref{eqn:slack prob convergence} (see also \cite[Lemma 2]{slack1968branching}) there exists $e_n=o(1)$ such that 
    \begin{equation}
        1-f_n(0)=\CC n^{-\beta}\rk{1+e_n}\, .
    \end{equation}
    Abbreviate $c=\CC^{\beta }$ from now on. We now assume that $0<u<1$. Define
    \begin{equation}
        y_n(u)=y_n=\ex^{-u(1-f_n(0))}\, .
    \end{equation}
    Hence, there exist sequences $\rk{h_n}_n=o(1)$ (independent of $u$, as $u$ remains bounded from above) and $\rk{k_n}_n\in\N$ such that
    \begin{equation}
    k_n=k_n(u)=u^{-1/\beta}n\rk{1+h_n}\qquad\text{and}\qquad f_{k_n}(0)\le y_n<f_{k_n+1}(0)\, ,
    \end{equation}
    By \cite[Eq. (2.6)]{slack1968branching} that there exists $\rk{g_n}_n=o(1)$ such that
    \begin{equation}
        \frac{1-f_{r+1}(0)}{1-f_r(0)}=1-\beta r^{-1}\rk{1+g_r}\, ,
    \end{equation}
    for all $r\geq 1$, where we have used the relation $\CC^{\alpha-1}c_o=\beta$ from the previous lemma. Hence
    \begin{equation}
        \frac{1-f_{n+M}(0)}{1-f_n(0)}=\exp\rk{\sum_{l=0}^{M-1}\log\rk{1-\beta (n+l)^{-1}\rk{1+g_{n+l}}}}.
    \end{equation}
    Therefore, there exists a $\rk{j_n}_n=o(1)$ such that for all $n, M \geq 1$:
    \begin{equation}\label{eqn:f ratio}
        \frac{1-f_{n+M}(0)}{1-f_n(0)}=\ex^{-\beta \rk{\log(M+n-1)-\log(n))}\rk{1+j_n}}\, .
    \end{equation}
    Now let $\phi_n(t)$ be the Laplace transform of (the rescaled) size of generation $n$ conditioned on survival, so that by definition,
    \begin{equation}
        \phi_n(u)=\E\ek{\ex^{-u(1-f_n(0))Y_n}|Y_n>0}=1-\frac{1-f_n(y_n(u))}{1-f_n(0)}\, .
    \end{equation}
   By definition of $k_n$, we moreover have that
    \begin{equation}
         \frac{1-f_{n+k_n(u)}(0)}{1-f_n(0)}\ge \frac{1-f_n(y_n)}{1-f_n(0)}\ge \frac{1-f_{n+k_n(u)+1}(0)}{1-f_n(0)}\, ,
    \end{equation}
    Hence, by \eqref{eqn:f ratio}, we have for all $0<u<1$
    \begin{align}\label{eqn:Ysurvive rescaled tails}
    \begin{split}
        \phi_n(u) = 1-\left(\frac{n}{n+k_n(u)(1+o(1))}\right)^{\beta (1+o(1))}&= 1- \left(\frac{1}{1+u^{-1/\beta}(1+o(1))}\right)^{\beta (1+o(1))} \\
        &= 1-u\rk{1+u^{\alpha-1}}^{-\beta(1+o(1))}\, ,
    \end{split}
    \end{align}
    where the $o(1)$ are independent of $u$.
        Note that if $X$ is a positive random variable with mean 1, then
\begin{equation}
    \gk{X\ge 2r}\subset \gk{\ex^{-X/r}-1+X/r\ge 1}\, ,
\end{equation}
and moreover the latter random variable is non-negative. Hence, using Markov's inequality, we obtain that
\begin{equation}\label{Eq1710231}
    \P(X\ge 2r)\le \E\ek{\ex^{-X/r}}-1+\E\ek{X/r}=\phi_X(1/r)-1+1/r\, .
\end{equation}
Hence, plugging Equation \eqref{Eq1710231} into
\eqref{eqn:Ysurvive rescaled tails}, we get that there exists a $C>0$ such that for all $z>1$, as $l\to\infty$
    \begin{equation}\label{Equation72823}
          \P\rk{ p_lY_l>z}\le C z^{-\alpha(1+o(1))}\, .
    \end{equation}
    Using \cite[Theorem 5.1 (ii)]{berger2019notes} (with $y=x$ in Berger's notation) Equation \eqref{Equation72823} implies that for every $\tilde\alpha<\alpha$ that there exists an $l_o>0$ and $C>0$ such that for all $l>l_o$ and $n\ge 1$ and for all $x\ge n^{1/\tilde\alpha}$, we have that
    \begin{equation}\label{Equation728231}
        \P\rk{ \sum_{i=1}^n\rk{p_lY_l\hk{i}-1}>x}\le C n x^{-\tilde\alpha}\, .
    \end{equation}
    However, note that (by adjusting the constant $C$) Equation \eqref{Equation728231} continues to hold for $l\in\gk{1,\ldots,l_o}$. Indeed, as $l_o$ is fixed, the probability generating function of $Y_k$ is given $f^{\circ k}(x)=f(f(\ldots f(x)\ldots))$, where we apply $f$ exactly $k$ times. Since this is a finite convolution, we get the same tail bounds as in the case $l=1$. This concludes the proof.
\end{proof}

\bibliography{thoughts}
\bibliographystyle{alpha}
\end{document}